\pgfplotsset{compat=1.16}
\numberwithin{equation}{section}
\providecommand{\examplename}{Example}
\newtheorem{theorem}{Theorem}[section]
\newtheorem{proposition}[theorem]{Proposition}
\newtheorem{corollary}[theorem]{Corollary}
\newtheorem{lemma}[theorem]{Lemma}
\theoremstyle{remark}
\newenvironment{remark}
    {\pushQED{\qed}\remarkx}
    {\popQED\endremarkx}
\theoremstyle{definition}
\newtheorem*{example*}{\protect\examplename}
\theoremstyle{plain}
\newtheorem*{assumption*}{Assumption}
\newcolumntype{C}{>{\centering\arraybackslash}m{3cm}}
\newcolumntype{T}{>{\centering\arraybackslash}m{5.65cm}}
\newcolumntype{L}{>{\centering\arraybackslash}m{8.5cm}}
\newcommand\N{\mathbb{N}}
\newcommand\Z{\mathbb{Z}}
\newcommand\R{\mathbb{R}}
\newcommand\E{\mathds{E}}
\newcommand\p{\mathds{P}}
\newcommand\1{\mathds{1}}
\newcommand\ld{,\ldots,}
\newcommand\eqd{\overset{\mathscr{D}}{=}}
\newcommand{\D}{\mathrm{d}}
\newcommand\cid{\xrightarrow{\mathscr{D}}}
\newcommand{\wt}[1]{\widetilde{#1}}
\newcommand{\wh}[1]{\widehat{#1}}
\newcommand\var{\mathrm{Var}}
\newcommand\cov{\mathrm{Cov}}
\newcommand\cor{\mathrm{Corr}}
\newcommand\Dom{\mathrm{Dom}}
\newcommand\Diag{\mathrm{Diag}}
\newcommand\erf{\mathrm{erf}}
\newcommand\KS{\mathrm{KS}}
\newcommand\Kol{\mathrm{Kol}}
\newcommand{\mW}{\mathcal{W}}
\newcommand{\HR}{\mathcal{R}}
\newcommand{\CD}{\mathscr{C}}
\newcommand{\tra}{{\scalebox{0.6}{$\top$}}}
\newcommand\nf[1]{\normalfont{#1}}
\newcommand\mH{\mathfrak{H}}
\newcommand{\OPn}[1]{\|{#1}\|_{\mathrm{op}}}
\newcommand{\floor}[1]{\lfloor {#1}\rfloor }
\newcommand{\mD}{\mathbb{D}^{1,2}}
\appto{\bibsetup}{\sloppy}
\title[Quantitative bounds for high-dimensional non-linear functionals]{Quantitative bounds for high-dimensional non-linear functionals of Gaussian processes}
\author{Andreas Basse-O'Connor$^*$ \& David Kramer-Bang$^{\dag}$}
\address{$^{*\dag}$Department of Mathematics, Aarhus University, DK}
\email{$^*$basse@math.au.dk}
\email{$^\dag$bang@math.au.dk}
\begin{document}

\begin{abstract}
In this paper, we establish explicit quantitative Berry--Esseen bounds in the hyper-rectangle distance $d_\HR$, the convex distance $d_\CD$ and the $1$-Wasserstein distance $d_\mW$ for high-dimensional, non-linear functionals of Gaussian processes, allowing for strong dependence between variables. Our main result demonstrates that, under a smoothness assumption, the convergence rate under $d_\HR$ is sub-polynomial in the dimension and polynomial under $d_\CD$ and $d_\mW$. To the best of our knowledge, our results under $d_\HR$ provide the first explicit sub-polynomial bound for high-dimensional, non-linear functionals of Gaussian processes beyond the i.i.d. setting. Building on this, we derive explicit Berry--Esseen bounds under both $d_\HR$ and $d_\CD$ for multiple statistical examples, such as the method of moments, empirical characteristic functions, empirical moment-generating functions, and functional limit theorems in high-dimensional settings.
\end{abstract}

\subjclass[2020]{60F05; 60G15; 60H07}

\keywords{Multivariate CLT, Berry--Esseen, Breuer--Major, Malliavin calculus}

\maketitle

\section{Introduction}\label{sec:intro}

Central limit theorems (CLTs) are some of the most important cornerstones in our understanding of randomness. They provide a universality result showing that the normal distribution emerges under rescaled aggregation of randomness, regardless of the initial distribution or the details of the model under consideration, in a broad setting. Key functionals of interest often have highly intractable distributions. However, CLTs show that, for a sufficiently large sample size, the limiting distribution is asymptotically Gaussian. Because of their widespread applications in the applied sciences, both univariate and multivariate CLTs have been extensively studied for decades. Justifying the application of CLTs requires quantifying when the sample size is ``large enough''. Such results are usually referred to under the umbrella of Berry--Esseen bounds, and they quantify the finite sample rate of convergence in the CLT based on a chosen metric. In this paper, we focus on the hyper-rectangle distance $d_\HR$ and the convex distance $d_\CD$ defined in~\eqref{eq:d_R_and_d_C_defn} below, whenever working with multivariate data. In the case of independent and identically distributed (i.i.d.) univariate random variables $(X_k)_{k \in \N}$ with finite third moment, we have for $S_n=\sqrt{n}(n^{-1}\sum_{k=1}^n X_k - \E[X_1])$ and $Z\sim \mathcal N(0,\var(X_1))$, that
\begin{equation}\label{sljdfljsdlfjl}
    d_\Kol(S_n,Z)\coloneqq \sup_{x \in \R}|\p(S_n \le x)-\p(Z\le x)|\leq C\frac{\E[|X_1|^3]}{\sqrt{n}},\qquad  \text{for all }n\geq 1,
\end{equation} where $C>0$ is a universal constant and $d_\Kol(S_n,Z)$ denotes the Kolmogorov distance between the random variables $S_n$ and $Z$. That is,~\eqref{sljdfljsdlfjl} shows that the convergence rate in the central limit theorem is of the order $n^{-1/2}\E[|X_1|^3]$; moreover, this rate is optimal in the sense that symmetric Bernoulli random variables achieve a lower bound with the same rate, cf.~\cite{MR90166}. This result gives an explicit formula for how the third absolute moment $\E[|X_1|^3]$ and the number of samples $n$ affect the rate of convergence in the CLT.  However, this fundamental result is stated in a highly simplistic model. 
Many statistical problems involve both (a) high-dimensional data and (b) strong dependence, both of which typically slow down convergence rates in the CLT compared to the classical univariate i.i.d.\ case. While there has been significant progress in understanding the effects of dimensionality on convergence rates in i.i.d.\ settings, much less is known about how strong dependence further impacts these rates; see Subsections~\ref{sec_i.i.d._intro} \&~\ref{sec_non-lin_univ_intro}, and Section~\ref{sec:connec_lit} for references. To the best of our knowledge, explicit quantitative bounds that capture the interplay between sample size, dependence, and dimensionality in high-dimensional dependent models remain largely unexplored.

In this paper, we construct explicit quantitative bounds for high-dimensional non-linear functionals of Gaussian processes, a setting that naturally encompasses both high-dimensionality and strong dependence. In this multivariate setting,  we will work with three metrics, namely the convex distance $d_\CD$, the hyper-rectangle distance $d_\HR$ and the $1$-Wasserstein distance $d_\mW$. For general random vectors $\bm{X},\,\bm{Y}$, these are defined by
\begin{equation}\label{eq:d_R_and_d_C_defn}
\begin{aligned}
    d_\HR(\bm{X},\bm{Y})
    &\coloneqq 
    \sup_{A \in \HR}|\p(\bm{X} \in A)-\p(\bm{Y} \in A)|, \quad &&\text{where } \HR=\{\bigtimes_{i=1}^d (a_i,b_i): -\infty \le a_i \le  b_i \le \infty\}, \\ 
    d_\CD(\bm{X},\bm{Y})
    &\coloneqq 
    \sup_{A \in \CD}|\p(\bm{X} \in A)-\p(\bm{Y} \in A)|,\quad &&\text{where }\CD=\{A \in \mathcal{B}(\R^d): A\text{ is convex}\},\\
    d_{\mW}(\bm{X},\bm{Y}) &\coloneqq \sup_{f \in \mW}|\E[f(\bm{X})]-\E[f(\bm{Y})]|,\quad &&\text{where }\mW=\{ f:\R^d \to \R\,:\, f\text{ is  $1-$Lipschitz}\},
\end{aligned}
\end{equation} where $d_\HR(\bm{X},\bm{Y}) \le d_\CD(\bm{X},\bm{Y})$. Our main result, Theorem~\ref{thm:main_theorem}, establishes the first such explicit quantitative bounds in the general setting of Section~\ref{sec:high_dim_non_lin_subsec}, with a framework that can generalise to i.i.d.\ cases (see Proposition~\ref{prop:Phi_G_recovers_all_dist}). Despite their fundamental role in statistical methodologies, these functionals have received little attention in terms of explicit quantitative bounds for the fluctuations. This paper fills that gap, with statistical applications detailed in Section~\ref{sec:applications_MoM_ChaFct_fdd}. 

\subsection{I.I.D.\ multivariate setting}\label{sec_i.i.d._intro}

Extending the CLT to the multivariate setting (i.e.\ $d \ge 2$), we consider a sequence of i.i.d.\ $d$-dimensional random vectors $(\bm{X}_k)_{k \in \N}$ with $\E[\bm{X}_1]=\bm{0}$ and covariance matrix $\bm{\Sigma}$. Then, the multivariate CLT states that $\bm{S}_n = n^{-1/2}\sum_{k=1}^n\bm{X}_k \cid \bm{Z}$ as $n \to \infty$, where $\bm{Z}$ is a standard multivariate Gaussian random vector in $\R^d$ with mean $\bm{0}$ and covariance matrix $\bm{\Sigma}$ (i.e.\ $\bm{Z} \sim \mathcal{N}_d(\bm{0},\bm{\Sigma})$). As in the univariate case, understanding the rate of convergence in the multivariate CLT has been of interest for multiple decades; see, e.g.~\cite{MR4152649,MR2962301,MR2144310,MR4003566,MR4488569,MR4312842,bong2022highdimensional,cammarota2023quantitative,MR4505371,MR4583674} and references herein. The topic has especially gained traction recently with an additional focus on high-dimensional CLTs and quantitative bounds with an explicit dimensional dependence. In the i.i.d.\ setting, and under the additional assumption that $\E[|\bm{X}_1|^3]<\infty$ and $\cov(\bm{X}_1)=\bm{I}_d$,~\cite[Thm~1.1]{MR2144310} implies the existence of a universal constant $C>0$, such that 
\begin{equation}\label{eq:hyper_rect_defn}
    d_\CD(\bm{S}_n,\bm{Z})
    \le  
    C\frac{ d^{1/4}\E[|\bm{X}_1|^3]}{\sqrt{n}}, \quad \text{ for all }n,d \in \N.
\end{equation} 

A central goal in high-dimensional Berry--Esseen bounds is to control the dependence on dimension $d$ while maintaining an optimal (or near-optimal) rate in $n$. For i.i.d.\ summands, an extensive amount of literature exists for high-dimensional problems with bounds explicit in dimension, see Section~\ref{sec:connec_lit} and references therein. In the i.i.d.\ setting, stronger bounds than~\eqref{eq:hyper_rect_defn} in $d$ exist under $d_\HR$; however, such bounds all require strict assumptions, such as the observations being bounded, sub-Gaussian, sub-exponential, or having a $\log$-concave density, see Section~\ref{sec:connec_lit}. Outside of such strict assumptions, the best bound is in the setting of~\eqref{eq:hyper_rect_defn} and depends on $d$ only through $d^{1/4}$ and $\E[|\bm{X}_1|^3]$.

\subsection{Univariate non-linear functionals of Gaussians}\label{sec_non-lin_univ_intro}
The study of \emph{partial sums of
non-linear functionals of Gaussian processes} play a prominent role in the literature due to their fundamental role in many applications. This framework considers functionals of the form
$$S_n = \frac{1}{\sqrt{n}}\sum_{k=1}^n \Phi(G_k),\quad  \text{ for all }n \in \N,$$ where $\Phi:\R\to\R$ is measurable and non-linear, and $(G_k)_{k\in \N}$ is a centred stationary Gaussian sequence with autocovariance function $\rho$. If 
$\Phi\in L^2(\gamma,\R)$ (where $\gamma(\D x)\coloneqq (2\pi)^{-1/2}e^{-x^2/2}\D x$) has Hermite rank $r\ge 2$, i.e.\ has no affine part (meaning that $\Phi$ has no constant nor linear part), and $\sum_{k = 0}^\infty |\rho(k)|^2 <\infty$, then Breuer--Major theorem~\cite{MR716933} shows that $S_n \cid \sigma Z$ as $n \to \infty$ with $Z \sim \mathcal{N}(0,1)$, for a $\sigma^2<\infty$ which depends on the Hermite expansion of $\Phi$ (see Section~\ref{sec:preliminaries}). Under minimal regularity assumptions, a quantitative bound for the rate of convergence exists for $d_\Kol$~\cite[Thm~1.2]{MR4151212} (see also~\cite{MR3978683}) in the setting of the Breuer--Major theorem from above, and is given by
\begin{equation}
    d_\Kol\left(\frac{S_n}{\sqrt{\var(S_n)}},Z\right)
    \le \frac{C}{\sqrt{n}}\left(\|\rho_n\|_{\ell^1(\Z)}^{1/2}+ \|\rho_n\|_{\ell^{4/3}(\Z)}^2\right),
\end{equation} for all $n \ge 1$, where $\|\cdot\|_{\ell^p(\Z)}$ and $\rho_n$ are defined in Section~\ref{sec:preliminaries} below and $C>0$ is a constant independent of $n$. Note that~\cite[Thm~1.2]{ MR4151212} is stated for the total variation distance, which directly implies a bound for the Kolmogorov distance. Moreover, another strong general bound for the Kolmogorov distance is found in~\cite[Thm~2.1(3)]{MR2770907}, however, this bound has a less explicit dependence on $n$. We also refer to Theorem~\ref{thm:main_theorem} below, which also holds in the one-dimensional case and is completely explicit. Berry--Esseen type bounds characterise the finite sample error bound in the Gaussian approximation, making this bound crucially important in the literature and applied areas of statistics.

\subsection{High-dimensional non-linear functionals of Gaussians}\label{sec:high_dim_non_lin_subsec}
The fluctuation of high-dimensional non-linear functionals of Gaussian stochastic processes plays a significant role in various areas of statistics, see Section~\ref{sec:applications_MoM_ChaFct_fdd}. In fact, statistical procedures often rely on the Gaussian approximation; therefore, it is crucial to assess its finite sample accuracy. In particular, in the case of a test involving a high number of parameters, it is often unclear how much data is necessary for a Gaussian approximation to be satisfiable. This paper provides explicit quantitative bounds for time series data, allowing for strong correlations between observations in a setting with few known results.
Thus, quantitative bounds are obtained for the Gaussian approximation in the case of partial sums of \emph{high-dimensional, non-linear functionals} $\bm{S}_n\in \R^d$ in the setting 
\begin{equation}\label{eq:defn_S_n}
    \bm{S}_n\coloneqq \frac{1}{\sqrt{n}}\sum_{k=1}^n \Phi(G_k), \, \text{ for }n \in \N, \text{ where }\Phi:\R\to\R^d, 
\end{equation} and $(G_k)_{k\in \N}$ is a centred stationary Gaussian sequence with autocovariance function $\rho$. Throughout the paper, we assume that $\Phi$ is measurable with $\E[|\Phi(G_1)|^2]<\infty$. We obtain explicit non-asymptotic bounds for $d_\HR(\bm{S}_n,\bm{Z}_n)$, $d_\CD(\bm{S}_n,\bm{Z}_n)$ and $d_\mW (\bm{S}_n,\bm{Z}_n)$ between $\bm{S}_n$ and $\bm{Z}_n \sim \mathcal{N}_d(\bm{0},\bm{\Sigma}_n)$, for a general covariance matrix $\bm{\Sigma}_n$. The bounds constructed in this paper are explicit in the following factors: \textbf{I}) the numbers of samples $n$; \textbf{II}) the dimension $d$ of the codomain of $\Phi$; \textbf{III}) the dependence between the observations, described by the autocorrelation function $\rho$; \textbf{IV}) the regularity of $\Phi$, described by a multivariate parameter $\bm{\theta}$; \textbf{V}) the level of degenerativeness of the law of $\bm{Z}_n$ on $\R^d$ described by the minimum eigenvalue $\sigma_*^2$ of the correlation matrix $\cor(\bm{Z}_n)$; \textbf{VI}) how much $\bm{\Sigma}_n$ vary from $\cov(\bm{S}_n)$, measured by the max norm on the difference (under $d_\HR$) and measued by the Hilbert--Schmidt norm (under $d_\CD$ and $d_\mW$). The strengths and differences between the two distances are discussed below in Section~\ref{sec:main_results}. Here, we see that $d_\HR(\bm{S}_n,\bm{Z}_n)$ depends sub-polynomially on dimension $d$, under certain regularity assumptions of $\Phi$, and otherwise depends polynomially on $d$. We will also see that $d_\CD(\bm{S}_n,\bm{Z}_n)$ and $d_\mW(\bm{S}_n,\bm{Z}_n)$ always depends polynomially on $d$, and that this polynomial grows faster in $d$ than the one attained under $d_\HR$, when the main result Theorem~\ref{thm:main_theorem} is applicable. This leads us to an interesting property. Indeed, the regularity of $\Phi$ determines the dimensional dependence under $d_\HR$, meaning that a smoother function $\Phi$ results in a better dimensional dependence. Such a behaviour is \emph{not} evident under $d_\CD$ nor $d_\mW$, which has a fixed dimensional dependence no matter the regularity of $\Phi$. On the other hand, we see that the dependence on the number of observations $n$ is optimal under $d_\CD$ and $d_\mW$ (i.e.\ $1/\sqrt{n}$ for i.i.d.), where it is slightly sub-optimal for $d_\HR$ (e.g.\ $\log(n)/\sqrt{n}$ for i.i.d.).

This paper establishes the first explicit quantitative bounds for high-dimensional non-linear functionals of Gaussian processes, capturing both high-dimensionality and strong dependence. Our main result provides a general framework that also covers many i.i.d.\ settings, which are discussed further in Section~\ref{subsec:generality_of_setting} below. Beyond the i.i.d.\ setting, the framework~\eqref{eq:defn_S_n} also encompasses a broad class of partial sums of highly dependent $d$-dimensional vectors. In the latter case, which is the main topic of this paper, there is a limited understanding of the Gaussian fluctuation of the $\bm{S}_n$. When observations have strong dependence, it is out of reach to obtain results for general summands, and we therefore specialise to the setting of subordinated Gaussian observations $\Phi(G_k)$. 
As special cases of~\eqref{eq:defn_S_n}, this paper also considers important examples in Section~\ref{sec:applications_MoM_ChaFct_fdd}: method of moments, empirical characteristic functions, empirical moment generating functions and functional limit theorems. All these cases lie in the high-dimensional setting $d\gg 1$. Note that existing i.i.d.\ results do \emph{not} apply in our setting~\eqref{eq:defn_S_n} due to the (potential) strong dependence between observations. Moreover, the techniques used in the i.i.d.\ settings do not work for the dependent cases, and therefore, alternative methods are needed. See Subsection~\ref{sec:methodology} for a discussion of the methodology of this paper.

\subsection{Preliminaries}\label{sec:preliminaries}

Let $\N = \{1,2,\dots\}$, $\N_0=\{0,1,\dots\}$ and $\Z=\{\dots, -1,0,1,\dots\}$. For a matrix $\bm{A}=(\bm{A}_{i,j})_{1 \le i,j \le d} \in \R^{d \times d}$, we define $\sigma_*^2(\bm{A})$ as the smallest eigenvalue of $\bm{A}$, and $\sigma_\dag^2(\bm{A})$ denotes the largest eigenvalue of $A$. Moreover, we also define $
\underline{\sigma}^2(\bm{A})\coloneqq \min_{1 \le i \le d} \bm{A}_{i,i}$ and $
\overline{\sigma}^2(\bm{A})\coloneqq \max_{1 \le i \le d} \bm{A}_{i,i}$. In a setting where $\bm{\Sigma}$ is fixed, we will use the notation $\underline{\sigma}^2$, $\overline{\sigma}^2$, $\sigma_*^2$ and $\sigma_\dag^2$, otherwise, we use the more explicit form $\underline{\sigma}^2(\bm{\Sigma})$, $\overline{\sigma}^2(\bm{\Sigma})$, $\sigma_*^2(\bm{\Sigma})$ and $\sigma_\dag^2(\bm{\Sigma})$.

Define the Hilbert--Schmidt norm $\| \cdot \|_{\mathrm{H.S.}}$, for any matrix $\bm{A} \in \R^{d \times d}$ by $\|\bm{A}\|_{\mathrm{H.S.}}^2 = \langle \bm{A},\bm{A} \rangle_{\mathrm{H.S.}}= \sum_{1 \le i,j \le d} \bm{A}_{i,j}^2$. For a sequence $s=(s_k)_{k \in \Z} \subset \R$ we define $\|s\|_{\ell^p(\Z)}^p \coloneqq \sum_{k \in \Z}|s_k|^p$ for $p \in \N$, and let $\ell^p(\Z)$ be the space of sequences $s$ such that $\sum_{k\in \Z}|s_k|^p<\infty$. Define $\log_+(x)\coloneqq |\log(x)|\vee 1$ for all $x \in (0,\infty)$. We say a function $L$ is \emph{slowly varying} at $\infty$, denoted $L \in \mathrm{SV}_\infty$, if $L$ is positive in a neighbourhood of $\infty$ and $\lim_{k \to \infty} L(ck)/L(k)=1$ for all $c\in(0,\infty)$.

Let $(G_k)_{k \in \N}$ be a centred stationary Gaussian sequence and let $\rho$ and $\rho_n$ denote the autocovariance and truncated autocovariance functions, respectively, given by 
\begin{equation}
    \rho(k)\coloneqq \E[G_{|k|+1}G_{1}]\qquad \text{and} \qquad \rho_n(k)\coloneqq |\rho(k)|\1_{\{|k|<n\}}, \quad \text{ for }k \in \Z \text{ and }n \in \N. 
\end{equation}
Throughout the paper, we assume $\rho(0)=1$ implying $G_k \sim \mathcal{N}(0,1)$ for all $k \in \N$. We will let $\gamma$ denote the standard Gaussian measure $\gamma$ on $\R$ given by $\gamma(\D x)\coloneqq (2\pi)^{-1/2}e^{-x^2/2}\D x$. For $d\geq 1$ let $ L^p(\gamma,\R^d)$ be the set of measurable functions $\Phi:\R\to\R^d$ satisfying $\int_\R |\Phi(x)|^p \gamma(\D x) <\infty$. For $\varphi \in L^2(\gamma,\R)$, the process $(\varphi(G_k))_{k \in \N}$ is called a \emph{Gaussian subordinated process}, and allows for a strong dependence structure. 
In this paper, the focus is on the Gaussian fluctuations of multivariate subordinated Gaussian processes given in~\eqref{eq:defn_S_n}, i.e.\ $\bm{S}_n\coloneqq n^{-1/2} \sum_{k=1}^n \Phi(G_k)$, where $\Phi\in L^2(\gamma, \R^d)$. The coordinates of $\Phi$ are denoted as $\Phi=(\varphi_1,\dots, \varphi_d)$. For all $q \in \N_0$, let  $H_q$ denote the $q$th \emph{Hermite polynomial} given by $H_q(x) \coloneqq (-1)^qe^{x^2/2}\frac{d^q}{dx^q} e^{-x^2/2}$ for all $x \in \R$ and $q \ge 1$, and $H_0(x) \coloneqq 1$. Hence, $H_1(x)=x$, $H_2(x)=x^2-1$ etc. Any function $\varphi_i \in  L^2(\gamma,\R)$ has a \emph{Hermite expansion}, and in particular, for all $i=1,\dots, d$ there exist unique sequences $(a_{i,q})_{q\in \N_0}$ with $\sum_{q=0}^\infty q! a_{i,q}^2 <\infty$ such that  
\begin{equation}\label{eq:defn_Hermite_expansion}
    \varphi_i(x)=\sum_{q=0}^\infty a_{i,q}H_q(x), \quad \text{ for all }x \in \R.
\end{equation}  
The \emph{Hermite rank} of $\varphi_i$, denoted by $m_i\ge 0$, is defined as $m_i\coloneqq \inf\{q \ge 0:a_{i,q} \ne 0\}$. Hence, $\varphi_i$ has \emph{Hermite rank} $2$ if $a_{i,0}=a_{i,1}=0$, corresponding to $\varphi_i$ not having an affine part.

In the last part of this subsection, the main functions and constants needed for the paper's main results are introduced. For $\beta \in [1/2,1]$ and $\kappa\in\R$, define
\begin{equation}\label{eq:min_dim_dependence}
    \psi_{\beta,\kappa}(d)\coloneqq
        \log_+^{1/(2\beta)}(d)
        e^{r \log_+^{1/(2\beta)}(d)}, 
    \quad \text{ for all }d \in \N,
\end{equation} 
where $r=r_{\beta,\kappa}$ is given by 
    \begin{equation}\label{defn:r_constant-2}
        r\coloneqq 2e^{1/(2e)}\beta e^{(\kappa+\log(24)/2+5/(4e))/\beta}2^{1/(2\beta)}.
    \end{equation}
For all $\beta \in (1/2,1]$ and $\kappa \in \R$, $\psi_{\beta, \kappa}$
is asymptotically larger than any power $p>0$ of the  logarithm function but asymptotically smaller than any power functions with exponent $\epsilon>0$, i.e.\    
\begin{equation}\label{eq:bounds_psi}
\log^p(d) \le \psi_{\beta,\kappa}(d) \le  d^\epsilon, \quad \text{ for all large enough }d.
\end{equation} 
Equation~\eqref{eq:bounds_psi} follows directly, by noting that $d^\epsilon=e^{\epsilon\log(d)}$ and $\log^p(d)=e^{p\log(\log(d))}$ for all $d \ge 2$. In particular, equation~\eqref{eq:bounds_psi} shows that $\psi_{\beta,\kappa}$ has sub-polynomial growth in $d$ for $\beta>1/2$.
On the other hand, for $\beta=1/2$, 
$\psi_{\beta,\kappa}(d)=\log(d)d^{r}$ for all $d \ge 2$. Define 
\begin{equation}\label{eq:defn_Upsilon_constant}
 \Upsilon \coloneqq \log\big(W(e^{-1 + 1/(2 e)})/e^{1/(2e)}\big)/2-\log(24)/2-5/(4e)=-2.709\ldots, 
\end{equation}
where $W(x)$ is the product log function (also called the Lambert $W$-function).

\section{Main Results}\label{sec:main_results}

The ensuing theorem obtains quantitative bounds for Gaussian fluctuations of high-dimensional non-linear functionals. For simplicity, all results in this section are stated in the setting where $\bm{\Sigma}_n=\cov(\bm{S}_n)$; however, all results of the section have analogous results for the case where $\bm{\Sigma}_n$ is a general invertible covariance matrix. In the general case, an additional term appears which increases the complexity of the bound, see Theorem~\ref{thm:main_mult_clt_techncial_thm} and Theorem~\ref{thm:Ext_to_convex_dist}.

\begin{theorem}\label{thm:main_theorem}
    For $d,n \in \N$ let $\bm{S}_n$ be a $d$-dimensional random vector given by~\eqref{eq:defn_S_n}. Assume $\bm{\Sigma}_n\coloneqq  \cov(\bm{S}_n)$ is invertible, and let $\bm{Z}_n \sim \mathcal{N}_d(\bm{0},\bm{\Sigma}_n)$ and $\sigma_*^2 = \sigma_*^2(\cor(\bm{S}_n))$. For $i=1,\dots, d$ suppose that $\varphi_i\in L^2(\gamma,\R)$ has Hermite rank at least 2 and Hermite coefficients $(a_{i,q})_{q \ge 2}$. Assume there exists $\bm{\theta}=(\beta,\kappa,c)\in [1/2,1]\times \R \times (0,\infty)$ such that
\begin{equation}\label{eq:main_assump_a_i,q}
        |a_{i,q}| \le c \frac{e^{\kappa q} }{(q!)^{\beta}}, \qquad \text{ for all }q \ge 2 \text{ and }i=1\ld  d.
    \end{equation} If $\beta=1/2$ assume additionally that $\kappa <  \Upsilon$ in the case of~\eqref{eq:main_result_inequality} and $\kappa < -\log(3)/2$ in the case of~\eqref{eq:main_result_inequality_d_C}. Then, there exists a finite constant $C_{\bm{\theta}}$, only depending on 
$\bm{\theta}$, such that, for all $n,d \in \N$,
\begin{align}
    d_\HR(\bm{S}_n,\bm{Z}_n)
    &\le 
    C_{\bm{\theta}} \psi_{\beta,\kappa}(d) n^{-1/2}\|\rho_n\|_{\ell^1(\Z)}^{3/2} \log_+\left(n^{-1/2}\|\rho_n\|_{\ell^1(\Z)}^{3/2}\right)  \frac{\log_+(\sigma^2_*)}{\sigma^2_*}, \label{eq:main_result_inequality}\\
    d_\CD(\bm{S}_n,\bm{Z}_n) 
    &\le 
    C_{\bm{\theta}} d^{65/24}   \frac{\|\rho_n\|_{\ell^1(\Z)}^{3/2}}{\sqrt{n}} \frac{1}{(\sigma_*^2)^{3/2}}, \, \text{ and }\, d_\mW(\bm{S}_n,\bm{Z}_n) \le C_{\bm{\theta}} d^{3/2} \frac{\|\rho_n\|_{\ell^1(\Z)}^{3/2}}{\sqrt{n}}  \frac{\sigma_\dag(\bm{\Sigma}_n)}{\sigma_*^2(\bm{\Sigma}_n)}.\label{eq:main_result_inequality_d_C}
\end{align}
\end{theorem}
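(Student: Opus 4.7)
The plan is to combine the Malliavin--Stein methodology with a Hermite truncation argument that exploits the smoothness condition~\eqref{eq:main_assump_a_i,q}. With $\bm{a}_q \coloneqq (a_{1,q},\ldots,a_{d,q})^\tra$, I would work with the decomposition $\bm{S}_n = \bm{S}_n^{\le Q}+\bm{S}_n^{>Q}$, where
\begin{equation}
    \bm{S}_n^{\le Q} \coloneqq \frac{1}{\sqrt n}\sum_{k=1}^n \sum_{q=2}^Q \bm{a}_q H_q(G_k),
\end{equation}
for a truncation level $Q$ to be chosen later. Only the $d_\HR$ bound uses this truncation; the $d_\mW$ and $d_\CD$ bounds will be obtained directly on $\bm{S}_n$.

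For $d_\mW$ and $d_\CD$, I would apply a multivariate Malliavin--Stein bound (Nourdin--Peccati for $d_\mW$, its convex-smoothing variant for $d_\CD$), reducing the problem to controlling the Malliavin covariance matrix $\bm{T}(\bm{S}_n)_{i,j}\coloneqq\langle D(\bm{S}_n)_i,-DL^{-1}(\bm{S}_n)_j\rangle$. Using the chaos decomposition of each $\varphi_i(G_k)$, the product formula for multiple Wiener--It\^o integrals, and stationarity of $(G_k)$, I would reduce the resulting contractions to
\begin{equation}
    \sqrt{\E\HSn{\bm{\Sigma}_n-\bm{T}(\bm{S}_n)}^2} \lesssim \frac{\|\rho_n\|_{\ell^1(\Z)}^{3/2}}{\sqrt n},
\end{equation}
with the constant absorbed into $C_{\bm\theta}$ via~\eqref{eq:main_assump_a_i,q}. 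Combining this with the standard dimensional prefactors from the multivariate Stein step produces the $d^{65/24}/\sigma_*^3$ and $d^{3/2}\sigma_\dag(\bm{\Sigma}_n)/\sigma_*^2(\bm{\Sigma}_n)$ factors in~\eqref{eq:main_result_inequality_d_C}.

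For $d_\HR$, I would use the triangle inequality with Gaussian smoothing
\begin{equation}
    d_\HR(\bm{S}_n,\bm{Z}_n)\le d_\HR(\bm{S}_n^{\le Q},\wt{\bm{Z}}_n^{\le Q}) + \p\bigl(\|\bm{S}_n^{>Q}\|_\infty>\delta\bigr)+\sup_{A\in\HR}\p(\bm{Z}_n\in A_{\delta}\setminus A_{-\delta}),
\end{equation}
where $\wt{\bm{Z}}_n^{\le Q}\sim\mathcal{N}_d(\bm{0},\cov(\bm{S}_n^{\le Q}))$ and $A_{\pm\delta}$ are inner/outer $\delta$-dilations of $A$. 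The Gaussian slab term is controlled by Nazarov-type anti-concentration, contributing only the factor $\log_+(\sigma_*^2)/\sigma_*^2$; the tail term is handled by Chebyshev using
\begin{equation}
    \E\|\bm{S}_n^{>Q}\|_\infty^2 \lesssim d\,\|\rho_n\|_{\ell^1(\Z)}\sum_{q>Q}\frac{e^{2\kappa q}}{(q!)^{2\beta-1}},
\end{equation}
which decays super-exponentially in $Q$ when $\beta>1/2$; the restriction $\kappa<\Upsilon$ for $\beta=1/2$ is precisely what is needed to keep this series summable with enough margin. For the main term I would use a Malliavin--Stein hyper-rectangle bound whose dependence on the chaos order $Q$ is tracked explicitly and behaves like $e^{rQ}$ with $r$ as in~\eqref{defn:r_constant-2}. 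Choosing $Q\asymp \log_+^{1/(2\beta)}(d)$ then balances the two contributions and produces exactly the factor $\psi_{\beta,\kappa}(d)$ in~\eqref{eq:main_result_inequality}.

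The hard part is precisely the hyper-rectangle Malliavin--Stein estimate for $\bm{S}_n^{\le Q}$ with sharp and explicit dependence on both $Q$ and $d$: combining Stein's method for indicators of hyper-rectangles (which requires regularisation of non-smooth test functions), Nazarov-style anti-concentration, and Malliavin-calculus contractions that remain uniform in the chaos order. This is where the proof departs from existing i.i.d.\ or low-chaos techniques, and carefully tracking the constants through the product formula is what yields the precise exponent $r$ and the threshold $\Upsilon$ at $\beta=1/2$.
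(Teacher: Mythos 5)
Your treatment of $d_\mW$ and $d_\CD$ matches the paper's route: both reduce to $\sqrt{\E\HSn{\bm{\Sigma}_n-\bm{T}(\bm{S}_n)}^2}\lesssim d\,\|\rho_n\|_{\ell^1(\Z)}^{3/2}n^{-1/2}$ via the chaos decomposition, Gebelein's inequality and Young's convolution inequality, fed into the convex-distance and Wasserstein Malliavin--Stein bounds. That part is sound.

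The $d_\HR$ argument, however, has a genuine gap: the truncation $\bm{S}_n=\bm{S}_n^{\le Q}+\bm{S}_n^{>Q}$ with $Q\asymp\log_+^{1/(2\beta)}(d)$ cannot produce a bound that decays in $n$. The discarded part satisfies $\E[(\bm{S}_n^{>Q})_i^2]=\sum_{q>Q}a_{i,q}^2\,q!\sum_{|k|<n}(1-|k|/n)\rho(k)^q\ge\sum_{q>Q}a_{i,q}^2\,q!$ (the $k=0$ term alone), which is a strictly positive constant independent of $n$ whenever the Hermite expansion is infinite. Consequently the Chebyshev-plus-anti-concentration term, optimised over $\delta$, is of order $\bigl(\E\|\bm{S}_n^{>Q}\|_\infty^2\bigr)^{1/3}$ times dimensional factors and does not vanish as $n\to\infty$ for $Q$ depending on $d$ alone. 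Forcing the tail to be $O(n^{-3/2})$ requires $Q$ to grow with $n$ (roughly $\log n/|\kappa|$ when $\beta=1/2$), and then your claimed $e^{rQ}$ blow-up of the main term becomes polynomial in $n$ and destroys the rate. Relatedly, your explanation of the threshold $\Upsilon$ is not right: $\sum_{q>Q}e^{2\kappa q}$ is summable for every $\kappa<0$, so summability of the tail cannot be what forces $\kappa<\Upsilon\approx-2.709$.

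The paper avoids truncation entirely. The quantity that must be small is not $\bm{S}_n^{>Q}$ but the \emph{centred} Malliavin-covariance fluctuation $\Delta_n(i,j)=\E[\langle-DL^{-1}S_{n,i},DS_{n,j}\rangle_\mH]-\langle-DL^{-1}S_{n,i},DS_{n,j}\rangle_\mH=\sum_{\ell,q\ge2}\delta(i,j,q,\ell,n)$, and \emph{every} chaos term $\delta(i,j,q,\ell,n)$ already carries the factor $\|\rho_n\|_{\ell^1(\Z)}^{3/2}n^{-1/2}$ in its $L^2$ norm. One then applies the maximal inequality for sub-$p$th-chaos variables (Kim--Park) term by term, which costs $\bigl(4e\log(2d^2-1+e^{(\ell+q)/2})/(\ell+q-2)\bigr)^{(\ell+q-2)/2}$ per term, and sums the full double series. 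The function $\psi_{\beta,\kappa}(d)$ and the exponent $r$ emerge from a Mittag--Leffler-type evaluation of $\sum_q(u(d)\alpha)^q/\sqrt{q!}$ with $u(d)=\log^{1/2}(2d^2)$, and the threshold $\kappa<\Upsilon$ at $\beta=1/2$ is exactly the condition $e^{e^{1/(2e)}\alpha^2/2+1/2}\alpha<1$ needed for the $d$-independent part of that double sum to converge against the fourth-moment growth $\E[H_\ell(G)^4]^{1/4}\asymp3^{\ell/2}\sqrt{\ell!}$ of the Hermite polynomials. This summation lemma, together with the Fang--Koike hyper-rectangle Stein-kernel bound applied to the exact (untruncated) Stein kernel $\E[\langle-DL^{-1}S_{n,i},DS_{n,j}\rangle_\mH\mid\bm{S}_n]$, is the missing engine of the proof; without it, or an equivalent device, your outline does not close.
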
 

We apply the general bounds~\eqref{eq:main_result_inequality} \&~\eqref{eq:main_result_inequality_d_C} from Theorem~\ref{thm:main_theorem} to obtain quantitative bounds in various statistical settings in Section~\ref{sec:applications_MoM_ChaFct_fdd}, such as method of moments, empirical characteristic functions, empirical moment generating functions and functional limit theorems. Note that~\eqref{eq:main_result_inequality} and~\eqref{eq:main_result_inequality_d_C} do \emph{not} require $d$ to be fixed, but hold for all $n,d \in \N$. This means that one may take $n$ and $d$ to infinity independently or dependently as desired, and the bounds still hold; see Corollary~\ref{cor:main_theorem_n_dep_d} for an example.

\begin{remark}\label{rem:main_inf_factors}
There are four main factors that influence the bounds in~\eqref{eq:main_result_inequality} \&~\eqref{eq:main_result_inequality_d_C}:

\noindent\nf{\textbf{1.}} \emph{The universal constant} $C_{\bm{\theta}}>0$ depending only on $\bm{\theta}$ from~\eqref{eq:main_assump_a_i,q}.

\noindent\nf{\textbf{2.}} \emph{The dimensional dependence}, described in the case of $d_\HR$ by the function $\psi_{\kappa,\beta}(d)$ describing the dimensional dependence, which is sub-polynomial for $\beta\in (1/2,1]$, and polynomial for $\beta=1/2$. For $d_\CD$ (resp. $d_\mW$), the dimensional dependence is always $d^{65/24}$ (resp. $d^{3/2}$), and does not depend on $\beta$. Hence, choosing a smoother function $\Phi$, does not in the case of~\eqref{eq:main_result_inequality_d_C} improve the $d$-dependence, as was the case of~\eqref{eq:main_result_inequality} under $d_\HR$.

\noindent\nf{\textbf{3.}} \emph{The dependence 
on number of observations $n$} and the dependence of the underlying Gaussian process, is controlled by $n^{-1/2}\|\rho_n\|_{\ell^1(\Z)}^{3/2}$. For~\eqref{eq:main_result_inequality}, the lag-dependence is controlled by the function $x\mapsto x\log_+(x)$ applied to $n^{-1/2}\|\rho_n\|_{\ell^1(\Z)}^{3/2}$, and in the case of~\eqref{eq:main_result_inequality_d_C}, it is purely determined by $n^{-1/2}\|\rho_n\|_{\ell^1(\Z)}^{3/2}$.

\noindent\nf{\textbf{4.}} \emph{How close $\bm{\Sigma}_n$ is from being singular}, measured measured by how close $\sigma_*^2$ is to $0$ as a function of $n$ and $d$. If $\sigma_*^2$ is bounded uniformly away from $0$, then $\sigma_*^{2}$ reduces to a universal constant, and hence components containing this quantity can be taken into the universal constant $C_{\bm{\theta}}$. 
\end{remark}

From Remark~\ref{rem:main_inf_factors} it is evident that there is a trade-off when choosing between $d_\HR$ and $d_\CD$ or $d_\mW$. Indeed, where $d_\HR$ always leads to a better dimensional dependence (as discussed in Remark~\ref{rem:min_conv_rate_a_i_q} below), it has a sub-optimal dependence on $n$ via an additional multiplicative $\log$-term. However, $d_\CD$ and $d_\mW$ have optimal $n$-dependence but a worse dimensional dependence. The proof of~\eqref{eq:main_result_inequality} from Theorem~\ref{thm:main_theorem} relies heavily on the results from~\cite{MR4312842}, where the proof of~\eqref{eq:main_result_inequality_d_C} depends mainly on~\cite[Thm~1.2]{MR4488569}. The ideas behind the proofs are explained in greater detail in Section~\ref{sec:methodology}. 

For the ensuing theorem and corollary, define $\underline{\underline{\sigma}}=((\underline{\sigma}\wedge 1)\1_{\{d=1,2\}}+\underline{\sigma} \,\1_{\{d >2\}})$, $\overline{\overline{\sigma}}=((\overline{\sigma}\vee 1)\1_{\{d=1,2\}}+\overline{\sigma}\, \1_{\{d >2\}})$, and $\wt\sigma_*^2=((\sigma_*^2\wedge 1)\1_{\{d=1,2\}}+\sigma_*^2 \,\1_{\{d >2\}})$. In the ensuing theorem and corollary, we will apply these functions to $\bm{\Sigma}$, given in the statements.
\begin{theorem}\label{thm:fixed_d}
    For $d,n \in \N$ let $\bm{S}_n$ be a $d$-dimensional random vector given by~\eqref{eq:defn_S_n}. For $i=1,\dots, d$ suppose that $\varphi_i\in L^2(\gamma,\R)$ has Hermite rank $m \ge 2$ and Hermite coefficients $(a_{i,q})_{q \ge m}$. Assume there exists $\bm{\theta}=(\beta,\kappa,c)\in [1/2,1]\times \R \times (0,\infty)$ such that~\eqref{eq:main_assump_a_i,q} holds, with the additional assumption for $\beta=1/2$ that $\kappa <  \Upsilon$ under $d_\HR$ and $\kappa < -\log(3)/2$ under $d_\CD$. Let $\bm{Z}\sim \mathcal{N}_d(\bm{0},\bm{\Sigma})$, for $\bm{\Sigma}_{i,j}=\sum_{\ell=m}^\infty \ell! a_{i,\ell}a_{j,\ell}\sum_{j \in \Z}\rho(j)^\ell$ for $i,j=1\ld d$, where we assume that $\sum_{k\in \Z} |\rho(k)|^m<\infty$. Then, there exist constants $C_{\bm{\theta}}>0$, only dependent on $\bm{\theta}$, such that for all $n,d \in \N$,
    \begin{align*}
        d_\HR(\bm{S}_n,\bm{Z})&\le 
    C_{\bm{\theta}} \log_+(d)\Delta(\bm{S}_n,\bm{\Sigma})
    \log_+\left(\Delta(\bm{S}_n,\bm{\Sigma})\right)\frac{\log_+\left(\,\overline{\overline{\sigma}} \,\wt\sigma_{*}^2/\,\underline{\underline{\sigma}}\,\right)}{\wt\sigma_*^2}, \, \text{ where}\\
    \Delta(\bm{S}_n,\bm{\Sigma})&=  \frac{\|\rho_n\|_{\ell^1(\Z)}^{3/2}}{\sqrt{n}}\frac{e^{r\log_+^{1/(2\beta)}(d)}}{\log_+(d)}+\sum_{|k| \ge n} |\rho(k)|^m + \sum_{|k|<n} \frac{|k|}{n}|\rho(k)|^m, \\
    d_\CD(\bm{S}_n,\bm{Z}) &\le C_{\bm{\theta}} d^{65/24} \left( \frac{ \|\rho_n\|_{\ell^1(\Z)}^{3/2}}{\sqrt{n}} + \sum_{|k| \ge n} |\rho(k)|^m + \sum_{|k|<n} \frac{|k|}{n}|\rho(k)|^m\right)\big((\sigma_*^2)
    ^{-3/2}+1\big), \quad \text{ and }\\
    d_\mW(\bm{S}_n,\bm{Z}) &\le C_{\bm{\theta}} d^{3/2} \left( \frac{ \|\rho_n\|_{\ell^1(\Z)}^{3/2}}{\sqrt{n}} + \sum_{|k| \ge n} |\rho(k)|^m + \sum_{|k|<n} \frac{|k|}{n}|\rho(k)|^m\right)\frac{\sigma_\dag}{\sigma_*^2}.
    \end{align*}
\end{theorem}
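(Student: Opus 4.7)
The strategy is to apply the \emph{general} versions of Theorem~\ref{thm:main_theorem}, namely Theorems~\ref{thm:main_mult_clt_techncial_thm} and~\ref{thm:Ext_to_convex_dist}, which permit an arbitrary invertible reference covariance. We take that reference to be the limiting covariance $\bm{\Sigma}$ instead of $\cov(\bm{S}_n)$. Those general bounds contain, on top of the terms already present in Theorem~\ref{thm:main_theorem}, a covariance-mismatch contribution measured by $\|\cov(\bm{S}_n)-\bm{\Sigma}\|$ in the appropriate norm: the entrywise max-norm under $d_\HR$ and the Hilbert--Schmidt norm under $d_\CD$ and $d_\mW$, as indicated in point \textbf{VI} of the introduction. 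Once this mismatch is controlled, the bound follows by direct substitution and cosmetic algebraic regrouping.

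The central calculation is to evaluate $\cov(\bm{S}_n)$ explicitly via the Hermite expansion~\eqref{eq:defn_Hermite_expansion} together with the orthogonality identity $\E[H_p(G_k)H_q(G_\ell)] = \delta_{p,q}\, p!\, \rho(k-\ell)^p$, yielding
\begin{equation}
\cov(\bm{S}_n)_{i,j} = \sum_{q \ge m} q!\,a_{i,q}a_{j,q}\sum_{|k|<n}\frac{n-|k|}{n}\rho(k)^q.
\end{equation}
Subtracting $\bm{\Sigma}_{i,j}=\sum_{q\ge m}q!\,a_{i,q}a_{j,q}\sum_{k\in\Z}\rho(k)^q$ and using $|\rho(k)|\le\rho(0)=1$, so that $|\rho(k)|^q\le|\rho(k)|^m$ for $q\ge m$, I obtain
\begin{equation}
\big|\cov(\bm{S}_n)_{i,j} - \bm{\Sigma}_{i,j}\big| \le \Big(\sum_{q\ge m} q!|a_{i,q}||a_{j,q}|\Big)\Big(\sum_{|k|<n}\tfrac{|k|}{n}|\rho(k)|^m + \sum_{|k|\ge n}|\rho(k)|^m\Big).
\end{equation}
Assumption~\eqref{eq:main_assump_a_i,q} yields $q!|a_{i,q}||a_{j,q}| \le c^2 e^{2\kappa q}/(q!)^{2\beta-1}$, so the coefficient sum is bounded by a constant depending only on $\bm{\theta}$: automatically for $\beta\in(1/2,1]$, and under the imposed critical conditions $\kappa<\Upsilon$ and $\kappa<-\log(3)/2$ when $\beta=1/2$. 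Writing $T_n\coloneqq \sum_{|k|<n}\tfrac{|k|}{n}|\rho(k)|^m+\sum_{|k|\ge n}|\rho(k)|^m$, this gives $\|\cov(\bm{S}_n)-\bm{\Sigma}\|_{\max}\le C_{\bm{\theta}}\,T_n$ and, via the trivial inequality $\|\bm{A}\|_{\mathrm{H.S.}}\le d\,\|\bm{A}\|_{\max}$, also $\|\cov(\bm{S}_n)-\bm{\Sigma}\|_{\mathrm{H.S.}}\le d\,C_{\bm{\theta}}\,T_n$.

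Inserting these estimates into the general versions of Theorem~\ref{thm:main_theorem} gives all three bounds almost immediately. For $d_\CD$ and $d_\mW$, the $d\,T_n$ Hilbert--Schmidt contribution is additive with the rate $\|\rho_n\|_{\ell^1(\Z)}^{3/2}/\sqrt{n}$, so both terms may be pulled under a common prefactor $d^{65/24}$ or $d^{3/2}$, absorbing the extra $d$ into $C_{\bm{\theta}}$. For $d_\HR$, rewriting $\psi_{\beta,\kappa}(d)=\log_+^{1/(2\beta)}(d)\,e^{r\log_+^{1/(2\beta)}(d)}$ and factoring out a $\log_+(d)$ lets one combine the main term and the max-norm mismatch $\log_+(d)\,T_n$ (the extra $\log_+(d)$ comes from the Nazarov-type anti-concentration built into the $d_\HR$ estimate) under a single $\log_+(d)\Delta(\bm{S}_n,\bm{\Sigma})$; the composite factor $\log_+(\Delta(\bm{S}_n,\bm{\Sigma}))$ arises because the general bound carries the transformation $x\mapsto x\log_+(x)$ and one bounds $\log_+$ of a sum by $\log_+$ of the maximum. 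The main technical obstacle is book-keeping minimum-eigenvalue quantities: Theorem~\ref{thm:main_theorem} is stated in terms of $\sigma_*^2(\cor(\bm{S}_n))$, whereas the displayed bound is phrased through $\sigma_*^2(\bm{\Sigma})$, $\underline{\sigma}$ and $\overline{\sigma}$. Converting between the two requires a perturbation argument using the mismatch bound $\|\cov(\bm{S}_n)-\bm{\Sigma}\|_{\max}\le C_{\bm{\theta}}T_n$ to compare correlation matrices, and is the motivation for the stabilised definitions $\underline{\underline{\sigma}}$, $\overline{\overline{\sigma}}$ and $\wt\sigma_*^2$, which absorb pathological rescaling in the degenerate cases $d\in\{1,2\}$ before the comparison is performed.
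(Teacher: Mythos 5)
Your proposal is correct and follows essentially the same route as the paper: apply the general reference-covariance versions (Theorems~\ref{thm:main_mult_clt_techncial_thm} and~\ref{thm:Ext_to_convex_dist}) with $\bm{\Sigma}$ in place of $\cov(\bm{S}_n)$, compute $\cov(\bm{S}_n)_{i,j}=\sum_{q\ge m}q!\,a_{i,q}a_{j,q}\sum_{|k|<n}(1-|k|/n)\rho(k)^q$ via Hermite orthogonality, bound the mismatch by $C_{\bm{\theta}}T_n$ using $|\rho(k)|^q\le|\rho(k)|^m$ and summability of $q!|a_{i,q}a_{j,q}|$ under~\eqref{eq:main_assump_a_i,q}, and pass from the max-norm to the Hilbert--Schmidt norm at the cost of a factor $d$. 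The only superfluous element is your closing ``perturbation argument'': no comparison between $\sigma_*^2(\cor(\bm{S}_n))$ and $\sigma_*^2(\bm{\Sigma})$ is needed, because the paper invokes the intermediate, pre-normalisation bounds~\eqref{eq:bound_before_corre_1},~\eqref{eq:bound_before_corre_2} and~\eqref{eq:d_c_before_correlation}, in which all eigenvalue and diagonal quantities already refer to the reference matrix $\bm{\Sigma}$ itself.
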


The ensuing corollary shows that the bounds from Theorem~\ref{thm:fixed_d} simplify if $(G_k)_{k \in \N}$ is assumed to have long-range or short-range dependence. A similar result can be constructed under the assumptions from~\eqref{eq:main_result_inequality} \&~\eqref{eq:main_result_inequality_d_C} in Theorem~\ref{thm:main_theorem}.

\begin{corollary}\label{cor:LRD_SRD_fixed cov:mat}
    Assume we are in the setting of Theorem~\ref{thm:fixed_d}, then the following statements hold:

    \noindent{\nf{(i)}} \underline{Short-range dependence:} Assume $|\rho(k)| \le \wt c \,|k|^{-\mu}L(|k|)$ for all $k\in \Z\setminus \{0\}$ for $\mu \ge 1$ and $L \in \mathrm{SV}_\infty$. If $\mu=1$ assume additionally $\|\rho\|_{\ell^1(\Z)}<\infty$. Then there exists $C_{\bm{\theta},\rho,m}\in (0,\infty)$, depending only on $\bm{\theta}$, $\|\rho\|_{\ell^1(\Z)}$, $\mu$, $\wt c$ and $m$, such that for all $n,d \in \N$,
such that
    \begin{align}
    d_\HR(\bm{S}_n,\bm{Z}_n)&\le 
    C_{\bm{\theta},\rho,m} \psi_{\beta,\kappa}(d)
    \frac{\log_+(n)}{\sqrt{n}} \frac{\log_+(\overline{\overline{\sigma}}\, \wt \sigma^2_*/\underline{\underline{\sigma}})}{\wt \sigma^2_*}, \label{eq:cor_main_srd_d_R_fix_sigma}\\
    d_\CD(\bm{S}_n,\bm{Z}_n) &\le C_{\bm{\theta},\rho,m} d^{65/24}  \frac{1}{\sqrt{n}}  \big((\sigma_*^2)^{-3/2}+1\big), \, \text{ and } \, d_\mW(\bm{S}_n,\bm{Z}_n) \le C_{\bm{\theta},\rho,m} d^{3/2}  \frac{1}{\sqrt{n}}  \frac{\sigma_\dag}{\sigma_*^2}.\label{eq:cor_main_srd_d_C_fix_sigma}
    \end{align}
    
\noindent{\nf{(ii)}} \underline{Long-range dependence:} If $|\rho(k)| \le \wt c \,|k|^{-\mu}L(|k|)$ for all $k \in \Z \setminus\{0\}$ for $\mu \in (2/3,1)$ and $L \in \mathrm{SV}_\infty$, then, there exists $C_{\bm{\theta},\wt c,\mu}\in (0,\infty)$, depending only on $\bm{\theta}$, $\wt c$ and $\mu$, 
such that for all $n,d \in \N$,
    \begin{align}
    d_\HR(\bm{S}_n,\bm{Z})\!
    &\le C_{\bm{\theta},\wt c,\mu} 
        \psi_{\beta,\kappa}(d)\frac{L(n)^{3/2}}{n^{(3\mu-2)/2}}\log_+\bigg(\frac{L(n)^{3/2}}{n^{(3\mu-2)/2}}\bigg) \frac{\log_+\left(\,\overline{\overline{\sigma}} \, \wt\sigma_{*}^2/\,\underline{\underline{\sigma}}\,)\right)}{\wt\sigma_*^2},
      \label{eq:cor_main_lrd_d_R_fix_sigma} \\
      d_\CD(\bm{S}_n,\bm{Z}) \!
      &\le C_{\bm{\theta},\wt c,\mu} d^{65/24} \frac{L(n)^{3/2}}{n^{(3\mu-2)/2}} \big((\sigma_*^2)^{-3/2}+1\big), \, \text{ and }\, d_\mW(\bm{S}_n,\bm{Z}) \!
      \le C_{\bm{\theta},\wt c,\mu} d^{3/2} \frac{L(n)^{3/2}}{n^{(3\mu-2)/2}} \frac{\sigma_\dag}{\sigma_*^2}.\label{eq:cor_main_lrd_d_C_fix_sigma}
    \end{align}
\end{corollary}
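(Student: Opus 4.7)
The plan is to invoke Theorem~\ref{thm:fixed_d} directly and estimate each of the three ingredients of $\Delta(\bm{S}_n,\bm{\Sigma})$, namely $A_n\coloneqq\|\rho_n\|_{\ell^1(\Z)}^{3/2}/\sqrt{n}$, $B_n\coloneqq\sum_{|k|\ge n}|\rho(k)|^m$ and $C_n\coloneqq\sum_{|k|<n}(|k|/n)|\rho(k)|^m$, using the polynomial bound $|\rho(k)|\le \tilde c\,|k|^{-\mu}L(|k|)$ and standard Karamata asymptotics for sums of regularly varying sequences. Since the three terms appear additively in $\Delta$, the task reduces to showing that, in each regime, $A_n$ captures the target rate and that $B_n,C_n$ are of equal or smaller order. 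The same estimates will feed directly into the $d_\CD$ and $d_\mW$ bounds, for which no $\log_+(\Delta)$-factor needs to be tracked.

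\textbf{SRD case ($\mu\ge 1$).} The hypothesis $\|\rho\|_{\ell^1(\Z)}<\infty$ (automatic for $\mu>1$ and assumed for $\mu=1$) immediately yields $\|\rho_n\|_{\ell^1(\Z)}\le \|\rho\|_{\ell^1(\Z)}$ and hence $A_n\lesssim n^{-1/2}$. Since $m\ge 2$ forces $m\mu\ge 2$, both $B_n$ and $C_n$ are controlled by $n^{1-m\mu}L(n)^m\vee n^{-1}$, which decays strictly faster than $n^{-1/2}$. Thus $\Delta(\bm{S}_n,\bm{\Sigma})\lesssim n^{-1/2}\cdot e^{r\log_+^{1/(2\beta)}(d)}/\log_+(d)$, with an implicit constant depending only on $\bm{\theta},\tilde c,\mu,m$ and $\|\rho\|_{\ell^1(\Z)}$.

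\textbf{LRD case ($\mu\in(2/3,1)$).} Karamata's theorem gives $\|\rho_n\|_{\ell^1(\Z)}\lesssim n^{1-\mu}L(n)$, so $A_n\lesssim n^{-(3\mu-2)/2}L(n)^{3/2}$, which is the advertised rate. Since $m\mu>4/3>1$, both tails $B_n$ and $C_n$ are of order $n^{1-m\mu}L(n)^m$, and the elementary inequality $1-m\mu\le -(3\mu-2)/2$ (equivalent to $m\ge 3/2$, valid for $m\ge 2$) shows $B_n+C_n\lesssim A_n$. Therefore $\Delta(\bm{S}_n,\bm{\Sigma})\lesssim n^{-(3\mu-2)/2}L(n)^{3/2}\cdot e^{r\log_+^{1/(2\beta)}(d)}/\log_+(d)$.

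\textbf{Combination and main obstacle.} Plugging these estimates into Theorem~\ref{thm:fixed_d}, the prefactor $\log_+(d)$ cancels the $1/\log_+(d)$ in the first term of $\Delta$, leaving $e^{r\log_+^{1/(2\beta)}(d)}$ multiplied by the purely $n$-dependent rate. To recover $\psi_{\beta,\kappa}(d)=\log_+^{1/(2\beta)}(d)\,e^{r\log_+^{1/(2\beta)}(d)}$ it remains to extract a factor $\log_+^{1/(2\beta)}(d)$ from $\log_+(\Delta)$: one splits $\log_+(\Delta)$ additively into a $d$-part bounded by $\log_+^{1/(2\beta)}(d)$ and an $n$-part, yielding $\log_+(n)$ in the SRD case and $\log_+\!\big(L(n)^{3/2}/n^{(3\mu-2)/2}\big)$ in the LRD case, in agreement with the statement. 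The $d_\CD$ and $d_\mW$ bounds follow in the same way, more directly because no $\log_+(\Delta)$-factor needs to be propagated. The only genuine analytic work lies in the Karamata asymptotics for $\|\rho_n\|_{\ell^1(\Z)}$ and the weighted tail sums $B_n,C_n$; the remaining step is careful bookkeeping to keep the constants dependent only on $(\bm{\theta},\tilde c,\mu)$ (augmented by $m$ and $\|\rho\|_{\ell^1(\Z)}$ in the SRD case) and to ensure every remaining factor involving $d$ is absorbed into $\psi_{\beta,\kappa}(d)$.
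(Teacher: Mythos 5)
Your proposal is correct and follows essentially the same route as the paper: bound $\|\rho_n\|_{\ell^1(\Z)}$ and the two tail sums via Karamata's theorem, check that the tails are dominated by $\|\rho_n\|_{\ell^1(\Z)}^{3/2}/\sqrt{n}$ (using exactly the inequality $1-m\mu\le(2-3\mu)/2$, i.e.\ $m\ge 3/2$, in the LRD case), and then propagate the resulting rate through Theorem~\ref{thm:fixed_d} together with the $\log_+$-splitting already used for Theorem~\ref{thm:main_theorem}. The only spot you pass over quickly is the borderline case $\mu=1$, $m=2$, where $m\mu=2$ and the convergence of $\sum_k|k|^{1-m\mu}L(k)^m$ is not automatic from the exponent alone; this is precisely where the additional hypothesis $\|\rho\|_{\ell^1(\Z)}<\infty$ (forcing $L(k)\to 0$, hence $L(k)^2\le L(k)$ eventually) is needed, as the paper spells out.
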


\begin{remark}
    Note that the case If $|\rho(k)| \le \wt c \,|k|^{-1}L(|k|)$ for all $k \in \Z \setminus\{0\}$ and $L \in \mathrm{SV}_\infty$, then~\eqref{eq:cor_main_lrd_d_R_fix_sigma} and~\eqref{eq:cor_main_lrd_d_C_fix_sigma} still holds, for any $\mu <1$ and $L\equiv 1$. This is the case, since $|\rho(k)| \le \wt c \,|k|^{-1}L(|k|)$ for all $k \in \Z \setminus\{0\}$ implies that $|\rho(k)| \le \wh c \,|k|^{-\mu}$ for all $k \in \Z \setminus\{0\}$.
\end{remark}

To the best of our knowledge, the strong dimensional control in~\eqref{eq:main_result_inequality} from Theorem~\ref{thm:main_theorem} and the $d_\HR$ bound from Theorem~\ref{thm:fixed_d} is new even in the widely studied case where $\Phi(G_k)$ are i.i.d.\ random vectors and outside of restrictive assumptions such as log-concavity, boundedness, sub-Gaussianity or sub-exponentiality (see~\cite{MR4505371,MR4583674,MR4312842}). By Proposition~\ref{prop:Phi_G_recovers_all_dist}, any i.i.d.\ sequence of $d$-dimensional random vectors $(\bm{X}_k)_{k \in \N}$ can be represented as $\bm{X}_k = \Phi(G_k)$ for a suitable function $\Phi:\R\to\R^d$ and an independent standard Gaussian sequence $(G_k)_{k \in \N}$. Thus, Corollary~\ref{cor:LRD_SRD_fixed cov:mat}(i) covers the case of all i.i.d.\ random vectors satisfying Assumption~\eqref{eq:main_assump_a_i,q}. The generality of the class of these i.i.d.\ vectors is discussed in depth in Section~\ref{subsec:generality_of_setting} below. In the i.i.d.\ case (covered by Corollary~\ref{cor:LRD_SRD_fixed cov:mat}(i)) the $n^{-1/2}\log(n)$ dependence in $n$ under $d_\HR$ is slightly sub-optimality, with the optimal dependence being $n^{-1/2}$ which is the case under $d_\CD$. However, using the methods from~\cite{MR4312842}, yields a structure of the form $\Delta \log_+(\Delta)$ (where $\Delta$ contains both $n$ and $d$ dependence), implying that a $\log_+(n)$ term is unavoidable when aiming for sub-polynomial $d$-dependence. Such a $\log$ error in optimality is often expected from the literature and is a tractable trade-off to gain strong control of the dimensional dependence, see e.g.\ Corollary~\ref{cor:main_theorem_n_dep_d} for an example on why this is the case.

Throughout the remainder of the paper, we will focus on the distances $d_\HR$ and $d_\CD$, and the differences which these distances impose. The reason is that both $d_\mW$ and $d_\CD$ are polynomial in $d$, and have very similar upper bounds, since they rely on the same Hilbert--Schmidt norm. Hence, all of the following results on the convex distance, can similarly be constructed for the $1$-Wasserstein distance. Some remarks for Theorem~\ref{thm:main_theorem} are in order. 

\begin{remark}\label{rem:min_conv_rate_a_i_q}
\noindent\nf{(a)} Since $\varphi_i \in L^2(\gamma,\R)$ for $i=1\ld d$, it follows that~\eqref{eq:main_assump_a_i,q} always holds for $\beta=1/2$, $\kappa=0$ and a $c>0$. As Lemma~\ref{lem:c_infty_main_result} shows, the smoothness of $\varphi_i$ influences the behaviour of $|a_{i,q}|$, and thus the choice of $\bm{\theta}$.

\noindent\nf{(b)} In Theorem~\ref{thm:main_theorem}, and throughout the paper, we assume that $\bm{\Sigma}_n$ is invertible (equivalent to $\sigma_*^2(\bm{\Sigma}_n)>0$). This assumption is standard in the literature when considering Gaussian fluctuations in the multivariate setting, see e.g.~\cite{MR4488569,MR4312842}. 

\noindent\nf{(c)} We want to emphasise again that the dimensional dependence is crucially determined by $\beta$ under $d_\HR$ in a type of critical value behaviour. Indeed, when $\beta=1/2$ then $\psi_{\beta,\kappa}(d)$ bahaves like $\log(d)d^r$ for $r$ given in~\eqref{defn:r_constant-2} which is determined by $(\beta,\kappa)$. We note that for all $\kappa<\Upsilon$, that $r<3/2<65/24$, and hence $\psi_{\beta,\kappa}(d)$ has a better dimensional dependence than $d^{65/24}$ from $d_\CD$ and $d^{3/2}$ from $d_\mW$. Moreover, when $\beta>1/2$, it follows that $\psi_{\beta,\kappa}(d)$ is sub-polynomial and hence grows slower than any power-function, especially slower than $d^{65/24}$ and $d^{3/2}$. Hence, in all cases where the bound~\eqref{eq:main_result_inequality} from Theorem~\ref{thm:main_theorem} is applicable, it will have a better dimensional dependence (i.e.\ grow slower in $d$) than the bound under $d_\CD$ and $d_\mW$, given in~\eqref{eq:main_result_inequality_d_C}. When $\beta=1/2$, we assume $\kappa<-\log(3)/2$ under $d_\CD$ and $d_\mW$, which is a slightly weaker assumption than $\kappa<\Upsilon<-\log(3)/2$ (needed under $d_\HR$). Hence, less regularity is needed of $\Phi$ to apply bounds under $d_\CD$ and $d_\mW$. However, when we can apply bounds for $d_\HR$, this has a better dimensional dependence, since $r<3/2$. In the case $\beta=1/2$, we would require $\kappa=(\log(65/1152)-3/e)/2>\Upsilon$ for $r=65/24$ and $\kappa=-(\log(32)+3/e)/2>\Upsilon$ for $r=3/2$.

\noindent\nf{(d)} Concrete examples of $\bm{\theta}=(\beta,\kappa,c)$ and $\psi_{\beta,\kappa}(d)$, are given in Sections~\ref{sec:applications_MoM_ChaFct_fdd} \&~\ref{sec:ext_results_examples} below. Here, we construct statistical examples such as the method of Moments, empirical and moment-generating functions and finite-dimensional convergence in Breuer--Major theorems. Moreover, examples for both $\beta=1/2$ and $\beta=1$ are provided, thereby establishing concrete examples that yield both sub-polynomial bounds and polynomial bounds under $d_\HR$. 
\end{remark}

\subsection{Statistical Applications}\label{sec:applications_MoM_ChaFct_fdd}

In statistics, many methods base their conclusions on Gaussian approximations. This applies, in particular, to the generalised method of moments, a key method used for estimation and hypothesis testing. Suppose that $G_1 \ld G_n$ is a stationary real-valued time series described by a statistical model $\{\p_\xi:\xi\in \Xi\}$. To estimate $\xi$, likelihood-based methods are often infeasible, and the generalised method of moments is the main methodology. 
This methodology relies on estimating equations, that is, to find a class of functions $\Phi:\R\to\R^d$ for $d\geq 1$ (depending on $\xi$) such that $\E_{\xi}[ \Phi(G_1) ] =0$ if and only if $\xi = \xi_0$, with $\xi_0$ denoting the true parameter of the model. To  construct a statistical test for the null hypotheses H$_0$: $\xi =\xi_0$ against the alternative hypotheses H$_1$: $\xi\neq \xi_0$, we 
reject the H$_0$ if $\bm{S}_n\coloneqq n^{-1/2}\sum_{k=1}^n \Phi(G_k)$ is large is an appropriate sense.

Assuming that $\bm{S}_n$ is close in distribution to $\bm{Z}_n\sim \mathcal{N}_d(0,\bm{\Sigma})$ for a covariance matrix $\bm{\Sigma}$, an approximate statistical test with significance level $\alpha_0>0$ can be constructed, by choosing a Borel-measurable acceptance region $A_\alpha\subset\mathbb{R}^d$ such that $\mathbb{P}(\mathbf{Z}\in A_\alpha)=\alpha_0$, and accept $H_0$ if $\mathbf{S}_n\in A_\alpha$. Even with a central limit theorem at hand, this raises the question of the \emph{finite-sample error}
\[
\text{Error}_{\text{f.s.}}(A_\alpha)\coloneqq \big|\mathbb{P}_{\xi_0}(\mathbf{S}_n\in A_\alpha)-\alpha_0\big|,
\]
and how to bound it uniformly over different classes of acceptance regions, i.e.\ $\HR$ and $\CD$.

As motivated by the structure of the paper, we have two natural classes in mind, which typically arise in applications:
\begin{itemize}[leftmargin=2.5em]
    \item[\nf{(i)}] \emph{Axis-aligned hyperrectangles.} The structures of many high-dimensional problems are product-like or axis-aligned, and the main interest of such structures lies in marginal and cumulative discrepancies. Hence, such procedures would use acceptance regions given by $A_\alpha=\{\mathbf{x}\in\mathbb{R}^d:\ \mathbf{v}_l\le \mathbf{x}\le \mathbf{v}_u\}$ for thresholds $\mathbf{v}_l,\mathbf{v}_u\in\mathbb{R}^d$ satisfying $\mathbb{P}(\mathbf{Z}\in A_\alpha)=\alpha_0$. Hence, from the definition of $d_\HR$ in~\eqref{eq:d_R_and_d_C_defn}, we have the uniform bound
\begin{equation}\label{eq:fs-hr}
\text{Error}_{\text{f.s.}}(A_\alpha)\le d_{\HR}(\mathbf{S}_n,\mathbf{Z}).
\end{equation}
\item[\nf{(ii)}] \emph{Convex sets.} For geometric procedures, such as quadratic-form and norm-based procedures, we use the acceptance regions $A_\alpha\in \CD$, chosen so that $\mathbb{P}(\mathbf{Z}\in A_\alpha)=\alpha_0$. Classic geometric examples include ellipsoids
$A_\alpha=\{\bm{x} \in \R^d:\ \mathbf{x}^\top \mathbf{W}\,\mathbf{x}\le r_\alpha\}$ (where $\bm{W}$ is a symmetric positive definite matrix) and Euclidean balls $A_\alpha=\{\bm{x} \in \R^d:\ \|\mathbf{x}\|\le r_\alpha\}$. Hence, from the definition of $d_\CD$ in~\eqref{eq:d_R_and_d_C_defn}, we have the uniform bound 
\begin{equation}\label{eq:fs-cvx}
\text{Error}_{\text{f.s.}}(A_\alpha)\le d_{\CD}(\mathbf{S}_n,\mathbf{Z}).
\end{equation}
\end{itemize}

Bounds for $d_{\CD}$ yield finite-sample guarantees for \emph{all} convex acceptance regions (including hyperrectangles), while bounds tailored to $d_{\HR}$ can be sharper and have a better dimensional dependence for box-type procedures. Hence, if your acceptance region or statistic is axis-aligned or CDF-based, $d_{\HR}$ typically yields sharper finite-sample bounds and better rates in $d$. If your region is convex (e.g., balls, ellipsoids, or norms), $d_{\CD}$ is the natural choice, since it aligns with the geometry of the test and is robust to reparametrisations, at the cost of heavier dimensional dependence. 

Another important distinction is in the VC dimension. Axis-aligned rectangles have finite VC dimension, enabling uniform convergence control, standard bootstraps for copulas, and tractable smoothing/anti-concentration arguments~\cite{vdVW1996,Dudley1999}. In contrast, the class of \emph{all} convex sets has infinite VC dimension in $\R^d$ for $d\ge2$, which obstructs tidy empirical-process control and breaks the rectangle-specific smoothing in~\cite{MR4312842}.

As is evident from Theorem~\ref{thm:main_theorem}, the product structure under $d_{\HR}$ allows for sub-polynomial dependence on $d$ even under strong dependence. For $d_{\CD}$ and $1$-Wasserstein $d_\mW$, current tools do not yield sub-polynomial dependence in $d$. For this to be the case, one would have to extend the results in~\cite{MR4312842}, which is infeasible (see further discussion in Section~\ref{sec:methodology}). Nonetheless, we obtain high-dimensional quantitative bounds for $d_{\CD}$ under possible strong dependence, with dimensional growth $d^{65/24}$ for all $\beta \in [1/2,1]$. Finally, recall that regularity/smoothness influences the dimensional dependence under $d_\HR$ via $\psi_{\kappa,\beta}(d)$, where increasing smoothness implies better dimensional dependence, a feature not present for $d_\CD$.

In the following subsections, we apply both~\eqref{eq:fs-hr} and~\eqref{eq:fs-cvx} to key special cases from the literature, to derive explicit bounds on the finite sample error when the time series $G_1,\dots, G_n$ is Gaussian under the null hypothesis. For $d_\HR$, the statistical functionals considered in this section are closely related to fundamental statistical tests, such as the Baringhaus--Henze--Epps--Pulley (BHEP) test~\cite{MR980849,MR725389}, as discussed in Section~\ref{sec:characteristic_func}. For geometric procedures and tests, a classical example is the Method of Moments test, such as the Shenton--Bowman test~\cite{MR381079}, which is a test based on Euclidean balls, with $d_{\CD}$ providing the corresponding guarantees, which is covered in Section~\ref{sec:method_moments} below.

\subsubsection{Method of Moments}\label{sec:method_moments}
Introduced by Karl Pearson in 1894, the \emph{method of moments} has been a fundamental statistical method. Since all moments uniquely characterise the Gaussian distribution, one application of this method is testing for Gaussianity. Moreover, because the first $n$ moments, cumulants, and Hermite polynomials are in one-to-one correspondence, they are often used interchangeably in various statistical tests, such as the Shenton--Bowman test and related methods~\cite{MR381079,AMENGUAL2024,MR2067685,DECLERCQ1998101}. Indeed, if $X$ is a centred random variable with finite moments, then  
\begin{equation}\label{ljsldjflsdj}
    X\sim \mathcal{N}(0,1)\qquad \text{ if and only if }\qquad \E[H_q(X)]=0 \text{ for all }q\geq 2,
\end{equation}
 cf.\ \cite[Thm~2.2.1]{MR2962301} \&~\cite[Prop.~2]{MR1691731}. To test for marginal Gaussianity, we test the first $d$ equations on the right-hand side of~\eqref{ljsldjflsdj}, using the setting of~\eqref{eq:defn_S_n}, with $\bm{S}_n$ given as
 \begin{equation}\label{eq:MOM_defn_S_n} 
     \bm{S}_n = \frac{1}{\sqrt{n}} \sum_{k=1}^n \left(\frac{H_2(G_k)}{\sqrt{2!}}, \dots, \frac{H_{d+1}(G_k)}{\sqrt{(d+1)!}}\right),
 \end{equation} where $G_1\ld G_n$ follows a stationary Gaussian sequence. All coordinates of $\bm{S}_n$ are standardised to have zero mean and unit variance, and $\cov(\bm{S}_n)$ is a diagonal matrix. 

\begin{corollary}\label{cor:method_moments_hermite_var} 
Let $\bm{S}_n$ be given in \eqref{eq:MOM_defn_S_n} where 
 $(G_k)_{k \in \N}$ is stationary Gaussian sequence. Suppose   $\bm{Z}_n\sim \mathcal{N}_d(\bm{0},\bm{\Sigma}_n)$ with 
 $\bm{\Sigma}_n = \cov(\bm{S}_n)$. Then, there exists a universal constant $C$ such that 
   \begin{equation}
d_\HR(\bm{S}_n,\bm{Z}_n) \le d_\CD(\bm{S}_n,\bm{Z}_n)
\le 
C d^{125/24} e^{\log(3) d} n^{-1/2} \|\rho_n\|_{\ell^1(\Z)}^{3/2}, \quad \text{ for all }n,d \in \N.\label{eq:rate_Hermite_variation_Up_d_C}
\end{equation}
\end{corollary}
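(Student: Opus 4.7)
The plan is to invoke the convex distance bound \eqref{eq:main_result_inequality_d_C} of Theorem~\ref{thm:main_theorem} applied to the coordinate functions $\varphi_i(x) = H_{i+1}(x)/\sqrt{(i+1)!}$ for $i = 1, \ldots, d$, and then carefully track how the constant $C_{\bm{\theta}}$ depends on the scale parameter $c$, which will necessarily scale with $d$. As a preliminary observation, each $\varphi_i$ has a unique nonzero Hermite coefficient $a_{i,q} = \delta_{q,i+1}/\sqrt{(i+1)!}$, so its Hermite rank equals $i+1 \ge 2$ and the setting of Theorem~\ref{thm:main_theorem} applies in principle.

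To verify assumption \eqref{eq:main_assump_a_i,q}, I would take $\beta = 1/2$ and $\kappa = -\log(3)/2 - \varepsilon$ for a small $\varepsilon > 0$, so the condition $\kappa < -\log(3)/2$ required by \eqref{eq:main_result_inequality_d_C} holds. The assumption then reduces, at the only nonzero index $q = i+1$, to $1 \le c e^{\kappa(i+1)}$ uniformly over $i = 1, \ldots, d$, and this is satisfied by $c = e^{-\kappa(d+1)} \asymp 3^{(d+1)/2} e^{\varepsilon(d+1)}$. Crucially, $c^2 \asymp 3^{d+1}$, which is precisely what will generate the $e^{\log(3)d}$ factor in the target bound. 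On the covariance side, the classical Hermite orthogonality $\E[H_p(G_k)H_q(G_\ell)] = p!\,\rho(k-\ell)^p\,\delta_{p,q}$ forces $\bm{\Sigma}_n = \cov(\bm{S}_n)$ to be diagonal; hence $\cor(\bm{S}_n) = \bm{I}_d$ and $\sigma_*^2 = 1$, which removes the $(\sigma_*^2)^{-3/2}$ factor from \eqref{eq:main_result_inequality_d_C}.

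Plugging these ingredients into \eqref{eq:main_result_inequality_d_C} yields $d_\CD(\bm{S}_n,\bm{Z}_n) \le C_{\bm{\theta}} d^{65/24} n^{-1/2} \|\rho_n\|_{\ell^1(\Z)}^{3/2}$, and the bound on $d_\HR$ follows at once from $\HR \subset \CD$. The main obstacle is that, since $c$ (and hence $\bm{\theta}$) now depends on $d$, the constant $C_{\bm{\theta}}$ cannot be treated as universal. To close the argument I would revisit the technical version (Theorem~\ref{thm:Ext_to_convex_dist}) and extract the explicit dependence of $C_{\bm{\theta}}$ on $c$; based on the role of hypercontractivity in the admissible range $\kappa < -\log(3)/2$ (which makes the quantity $\sum_q (\sqrt{3}\,e^{\kappa})^q$ convergent and gives $\|\varphi_i\|_4 \lesssim c$), the expected scaling is $C_{\bm{\theta}} \lesssim c^{2}\cdot \mathrm{poly}(d)$, so that $c^2 \asymp 3^d$ supplies $e^{\log(3)d}$ and the polynomial-in-$d$ factor combines with $d^{65/24}$ to produce the final $d^{125/24}$. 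This bookkeeping step is routine but delicate and is where any error is most likely to arise.
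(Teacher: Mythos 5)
Your setup is right (the coefficients $a_{i,q}=\delta_{q,i+1}/\sqrt{(i+1)!}$, the diagonal covariance, $\sigma_*^2=1$, and $d_\HR\le d_\CD$), and you correctly identify that the exponential factor must come from the scale parameter $c$. But the route through Theorem~\ref{thm:main_theorem}/\ref{thm:Ext_to_convex_dist} with a \emph{fixed} $\kappa=-\log(3)/2-\varepsilon$ does not close. Verifying~\eqref{eq:main_assump_a_i,q} at the worst index $i=d$ forces $c\ge e^{-\kappa(d+1)}$, hence $c^2\asymp e^{(\log 3+2\varepsilon)(d+1)}$, and by Remark~\ref{rem:dim_dep_constant} the resulting bound carries $e^{(\log 3+2\varepsilon)d}$ — which exceeds the target $e^{\log(3)d}$ by the factor $e^{2\varepsilon d}$, unbounded in $d$ for any fixed $\varepsilon>0$. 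You cannot simply let $\varepsilon\downarrow 0$: the constant $\wt C_{1/2,\kappa}$ hidden in Theorem~\ref{thm:Ext_to_convex_dist} comes from the series $\sum_q q^{1/2}e^{(\kappa+\log(3)/2)q}$, which diverges as $\kappa\uparrow-\log(3)/2$. So as written your argument proves only the weaker rate $e^{(\log 3+2\varepsilon)d}$ for each fixed $\varepsilon$. Moreover, your source for the extra $d^{60/24}=d^{5/2}$ is not substantiated: Remark~\ref{rem:dim_dep_constant} gives $C_{\bm{\theta}}\le c^2\wt C_{\beta,\kappa}$ under $d_\CD$ with no polynomial-in-$d$ factor, so the ``$\mathrm{poly}(d)$'' you invoke to reach $d^{125/24}$ has no identified origin in that theorem.

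The step you defer as ``routine but delicate'' is exactly where the proof lives. The paper resolves it by a different device: since each $\varphi_i$ has a \emph{finite} Hermite expansion with $N=\max_i\sup\{q:a_{i,q}\ne 0\}=d+1$, it applies Corollary~\ref{cor:finite_hermite_series}, which permits $\beta=1/2$, $\kappa=0$, $c=1$ (no $d$-dependent rescaling of $c$ at all) at the price of the explicit factor $\Psi(N,1/2)=N^{5/2}e^{(2\kappa+\log 3)N}=(d+1)^{5/2}e^{\log(3)(d+1)}$. This is precisely where both $e^{\log(3)d}$ and the additional $d^{5/2}$ (whence $d^{65/24+60/24}=d^{125/24}$) come from. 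Alternatively, your route could be salvaged by taking $\varepsilon=\varepsilon(d)\asymp 1/d$ and tracking the polynomial blow-up of $\wt C_{1/2,\kappa}$ near the endpoint, but that computation must actually be carried out and would land on a (slightly different) polynomial power of $d$; in its current form the proposal has a genuine gap.
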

Note that the underlying Gaussian sequence has a completely general dependence structure in Corollary~\ref{cor:method_moments_hermite_var}, which is more general than the i.i.d.\ setting covered in~\cite[Thm~1.1]{MethodOfMoments}. Compared to~\cite[Thm~1.1]{MethodOfMoments}, it is not surprising that the dimensional dependence is exponential since we even have an exponential lower bound in the i.i.d.\ case, as discussed below in~\eqref{eq:mom_OG}. If we specialize Corollary~\ref{cor:method_moments_hermite_var} to the short-range dependence case, i.e.\ $\|\rho\|_{\ell^1(\Z)}<\infty$, it follows that
\begin{equation}\label{eq:iid_bound_MoM}
    d_\HR(\bm{S}_n,\bm{Z}_n) \le d_\CD(\bm{S}_n,\bm{Z}_n) \le Cd^{125/24} e^{\log(3) d} n^{-1/2},  \quad \text{ for all } n,d\in \N. 
\end{equation} 
Assume that $(G_k)_{k \in \N}$ are i.i.d., $\bm{Z}\sim \mathcal{N}_d(\bm{0},\bm{I}_d)$, and define $C_u \coloneqq 58 e^{3\log(2)/2} /(e^{3\log(2)/2}-1)$ and $C_l \coloneqq (e\pi^32^3)^{-1/4}e^{-3\log(2)/2}$. By~\cite[Thm~1.1]{MethodOfMoments}, it follows for all $d\ge 2$ that there exists a sequence $(N_d)_{d\ge 2}$ in $\N$, such that for all $n \ge N_d$,
\begin{equation}\label{eq:mom_OG}
     C_ld^{-3/4}e^{d \cdot 3\log(2)/2}\, n^{-1/2} \le d_{\HR}(\bm{S}_n,\bm{Z}) \le d_\CD (\bm{S}_n,\bm{Z}) \le C_u  d^{3/4} e^{d\cdot 3\log(2)/2}\, n^{-1/2}.
    \end{equation}
    Note that~\eqref{eq:mom_OG} yields optimal exponential upper and lower bounds, which suggests that~\eqref{eq:iid_bound_MoM} is almost exponentially optimal, since $3\log(2)/2 \approx 1.03972$ and $\log(3) \approx 1.09861$ are close.

\subsubsection{Empirical Characteristic Functions \& Moment-Generating Functions}\label{sec:characteristic_func}

Empirical characteristic functions and moment-generating functions are examples of key statistical functionals used, e.g., in the BHEP test~\cite{MR980849,MR1089501} or~\cite{MR4137748}, respectively. The BHEP test is a characteristic function-based goodness-of-fit test that assesses whether a given sample follows a normal distribution. It works by comparing the empirical characteristic function of the sample with the characteristic function of the hypothesised distribution. A similar approach is used in~\cite{MR4137748} for empirical moment-generating functions. Our next result gives explicit bounds on the Gaussian approximation of empirical characteristic functions ((i) and (ii) below) and moment-generating function ((iii) below), evaluated in the $d$ points $\lambda_1,\dots, \lambda_d$. Let $K\in (0,\infty)$ and $\lambda_i \in (0,K]$ for all $i =1 \ld d$, and define the following functions:
    \begin{itemize}
        \item[\nf{(i)}] $\varphi^{\cos}_i(x)=\cos(\lambda_i x)-e^{-\lambda_i^2/2}$ for all $x \in \R$ and $i=1\ld d$;\label{itemone}
        \item[\nf{(ii)}] $\varphi^{\sin}_i(x)=\sin(\lambda_i x)-\lambda_i xe^{-\lambda_i^2/2}$ for all $x \in \R$ and $i=1\ld d$;
        \item[\nf{(iii)}] $\varphi^e_i(x)=e^{\lambda_i x}-e^{\lambda_i^2/2}(1+\lambda_i x)$ for all $x \in \R$ and $i=1\ld d$.
    \end{itemize}
    The functions (i)-(iii) corresponds to $\cos(\lambda_i x)$, $\sin(\lambda_i x)$ and $e^{\lambda_i x}$, adjusted to have Hermite rank $2$. 

\begin{corollary}
\label{cor:real_part_charac_fun}
Let $\bm{S}_n$ be a $d$-dimensional random vector given by~\eqref{eq:defn_S_n} and assume for $i=1\ld d$ that  $\varphi_i$ is given as in (i)-(iii) above for a $K \in (0,\infty)$. Let $\bm{Z}_n \sim \mathcal{N}_d(\bm{0},\bm{\Sigma}_n)$ with $\bm{\Sigma}_n=\cov(\bm{S}_n)$ being invertible and let $\sigma^2_*=\sigma_*^2(\cor(\bm{S}_n))$. Then, there exist finite constants $C_K,\kappa_K>0$, only depending on $K$, such that
   \begin{align}
        d_\HR(\bm{S}_n,\bm{Z}_n)&\le  C_K \psi_{1,\kappa_K}(d)
        n^{-1/2}\|\rho_n\|_{\ell^1(\Z)}^{3/2} \log_+\left(n^{-1/2}\|\rho_n\|_{\ell^1(\Z)}^{3/2}\right) 
        \frac{\log_+(\sigma^2_*)}{\sigma^2_*}, \quad \text{ and} \nonumber\\
        d_\CD(\bm{S}_n,\bm{Z}_n)&\le  C_K d^{65/24}
        n^{-1/2}\|\rho_n\|_{\ell^1(\Z)}^{3/2} \frac{1}{(\sigma_*^2)^{3/2}}, \quad \text{ for all } n,d \in \N.\label{eq:real_part_charac_fun_d_C}
   \end{align}
   \end{corollary}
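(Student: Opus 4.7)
The plan is to verify that the functions (i)--(iii) satisfy the main assumption~\eqref{eq:main_assump_a_i,q} of Theorem~\ref{thm:main_theorem} with parameters depending only on $K$, and then apply the theorem directly. The key tool is the Hermite generating function identity $e^{tx-t^2/2}=\sum_{q\ge 0} H_q(x) t^q/q!$.

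First, I would compute the Hermite expansions explicitly. Setting $t = i\lambda_i$ in the generating function and taking real and imaginary parts yields
\[
\cos(\lambda_i x) = e^{-\lambda_i^2/2}\sum_{q\text{ even}} \frac{(-1)^{q/2}\lambda_i^q}{q!}H_q(x),\qquad
\sin(\lambda_i x) = e^{-\lambda_i^2/2}\sum_{q\text{ odd}} \frac{(-1)^{(q-1)/2}\lambda_i^q}{q!}H_q(x),
\]
while setting $t=\lambda_i$ gives $e^{\lambda_i x} = e^{\lambda_i^2/2}\sum_{q\ge 0}\lambda_i^q H_q(x)/q!$. Subtracting the constant in case (i), the affine part $\lambda_i x e^{-\lambda_i^2/2}$ in case (ii), and $e^{\lambda_i^2/2}(1+\lambda_i x)$ in case (iii) precisely cancels the $H_0$ and $H_1$ contributions, so in all three cases the Hermite rank is at least $2$ (indeed $3$ for the sine case). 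More importantly, for every nonzero Hermite coefficient one has the uniform upper bound
\[
|a_{i,q}| \le e^{\lambda_i^2/2}\,\frac{\lambda_i^q}{q!} \le e^{K^2/2}\,\frac{K^q}{q!},\qquad q\ge 2,\ i=1,\dots,d,
\]
using $\lambda_i\in(0,K]$.

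Next, I would recognise this as exactly~\eqref{eq:main_assump_a_i,q} with the choice $\bm\theta = (\beta,\kappa,c) = (1,\log K,\,e^{K^2/2})$. Since $\beta = 1 > 1/2$, no additional restriction on $\kappa$ is imposed by Theorem~\ref{thm:main_theorem}, regardless of the size of $K$. Setting $\kappa_K := \log K$ (or any convenient representative) the function $\psi_{1,\kappa_K}$ is well-defined, and the constant $C_{\bm\theta}$ produced by Theorem~\ref{thm:main_theorem} depends only on $K$; I would rename it $C_K$.

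Finally, plugging the parameters into the bounds~\eqref{eq:main_result_inequality} and~\eqref{eq:main_result_inequality_d_C} of Theorem~\ref{thm:main_theorem} gives exactly the two inequalities in the statement, using the standing assumptions that $\bm\Sigma_n = \cov(\bm S_n)$ is invertible and $\sigma_*^2 = \sigma_*^2(\cor(\bm S_n))$. No genuine obstacle is expected: the result is a specialisation of the main theorem, and the only nontrivial step is the explicit Hermite coefficient bound, which follows transparently from the generating function. The only mild care needed is to check that the adjustments subtracted in (i)--(iii) indeed kill the $H_0$ and $H_1$ terms and do not alter the bound on the remaining coefficients, which is immediate from the expansions above.
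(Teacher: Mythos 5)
Your proposal is correct, and it follows the same overall strategy as the paper: verify that the functions (i)--(iii) satisfy assumption~\eqref{eq:main_assump_a_i,q} with $\bm{\theta}=(1,\log K, c_K)$ for a constant $c_K$ depending only on $K$, and then specialise Theorem~\ref{thm:main_theorem}. The only difference is in how the coefficient bound is obtained. The paper routes the verification through Lemma~\ref{lem:c_infty_main_result}: it computes $\varphi_i^{(q)}$ explicitly (e.g.\ $\varphi_i^{(q)}(x)=\lambda_i^q\cos(q\pi/2+\lambda_i x)$), bounds $\bigl|\int\varphi_i^{(q)}\,\gamma(\D x)\bigr|\le c\,e^{q\log K}$, and then invokes the identity $a_{i,q}=(q!)^{-1}\int\varphi_i^{(q)}\,\gamma(\D x)$. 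You instead read the Hermite coefficients directly off the generating function $e^{tx-t^2/2}=\sum_q H_q(x)t^q/q!$ at $t=i\lambda_i$ and $t=\lambda_i$, which yields $|a_{i,q}|\le e^{K^2/2}K^q/q!$ uniformly over the three cases (for the trigonometric cases the prefactor is in fact $e^{-\lambda_i^2/2}\le 1$, so your bound is safe though not tight). Both computations deliver the same $\bm{\theta}$, and your route is marginally more elementary since it bypasses the smoothness lemma; the paper's route has the advantage of being reusable when an explicit expansion is unavailable. One cosmetic remark: $\log K$ can be negative when $K<1$, whereas the statement advertises $\kappa_K>0$; since $\psi_{1,\kappa}(d)$ is increasing in $\kappa$, one may always enlarge $\kappa_K$ to $\max\{\log K,1\}$ without affecting the bound, so this is harmless (the paper's own proof has the same feature).
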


Since the function $\psi_{1,\kappa_K}$ is sub-polynomial, Corollary~\ref{cor:real_part_charac_fun} shows that empirical characteristic functions or empirical moment-generating functions have sub-polynomial dimension dependence under $d_\HR$ in case $\sigma_*^2$ is bounded away from zero, and depends on dimension as $d^{65/24}$ under $d_\CD$. Hence, clearly $d_\HR$ yields a better dimensional dependence for the structure imposed by this problem. This result sharply contrasts the exponential dimensional dependence obtained in Corollary~\ref{cor:method_moments_hermite_var} and~\cite[Thm~1.1]{MethodOfMoments} in the case of method of moments, where both $d_\HR$ and $d_\CD$ lead to exponential dimensional dependence. Corollary~\ref{cor:real_part_charac_fun} becomes even more explicit in the short-range dependence case $p=\|\rho\|_{\ell^1(\Z)}<\infty$. Indeed, for all $\epsilon>0$,~\eqref{eq:bounds_psi} yields a constant $C_{K,p,\epsilon}>0$ such that 
\begin{equation}\label{eq:dajshd}
     d_\HR(\bm{S}_n,\bm{Z}_n)\le  C_{K,p,\epsilon}  d^\epsilon\frac{\log_+(n)}{\sqrt{n}} \frac{\log_+(\sigma^2_*)}{\sigma^2_*}, \quad \text{ for all }n,d \in \N,
\end{equation} which is directly comparable to~\eqref{eq:real_part_charac_fun_d_C} whenever $\|\rho_n\|_{\ell^1(\Z)}\le p
<\infty$, which has dimensional dependence as $d^{65/24}$ and depends on $n$ through $1/\sqrt{n}$. Due to the structure of the upper bounds obtained throughout this paper, it is tractable to have sufficient conditions ensuring that $\sigma_*^2=\sigma_*^2(\cor(\bm{S}_n))$ is uniformly bounded away from zero. Such conditions are given in Proposition~\ref{prop:specif_theta_i} below in the setting of Corollary~\ref{cor:real_part_charac_fun}(i).
\begin{proposition}\label{prop:specif_theta_i}
    Assume the setting of Corollary~\ref{cor:real_part_charac_fun} with $\varphi_i=\varphi_i^{\cos}$ given as in~$(i)$. Suppose additionally that 
     $\lambda_i=i^\tau$ for a $\tau \ge 1$ for all $i=1 \ld d$. Then, the following statements hold.
     
\vspace{2mm}
    \noindent{\nf{(i)}} If $\|\rho\|_{\ell^2(\Z)}^2  
        <  \zeta(\tau) \coloneqq \dfrac{2e^{1/2}(1-e^{-1})(1-e^{-2^{2\tau}})}{\frac{e^{2^\tau }}{e^{2^{2\tau}/2}}\left(2+\frac{1}{\tau 2^{\tau-1}(2^\tau-1)}\right)
    +
    \frac{e^{3^\tau}}{e^{3^{2\tau}/2}}\left( 1+\frac{1}{\tau 3^{\tau-1}(3^\tau-1)}\right)}$, then $\inf_{n,d \in \N} \sigma^2_*> 0$. \vspace{2mm}

    \noindent{\nf{(ii)}} Assume $(G_k)_{k \in \N}$ is a fractional Gaussian noise (see Section~\ref{sec:eigenvalues_bound_fGn}) with Hurst parameter $H \in (0,1/2)$. If $\tau\ge 1.302$, then $\inf_{n,d \in \N} \sigma^2_*>0$ for all $H \in (0,1/2)$.
\end{proposition}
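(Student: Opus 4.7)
The plan is to control $\sigma_*^2(\cor(\bm{S}_n))$ via the Gershgorin circle theorem applied to the correlation matrix (whose diagonal is $1$), and to bound its off-diagonal entries using the explicit Hermite expansion of $\varphi_i^{\cos}$. For part (ii) I would then reduce the fGn case to (i) via a uniform bound on $\|\rho\|_{\ell^2(\Z)}^2$.

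First I would compute the Hermite expansion of $\varphi_i^{\cos}$ by taking real parts in $e^{i\lambda_i x+\lambda_i^2/2}=\sum_{q\ge 0}(i\lambda_i)^q H_q(x)/q!$, giving $a_{i,2q}=(-1)^q e^{-\lambda_i^2/2}\lambda_i^{2q}/(2q)!$ and $a_{i,2q+1}=0$, so $\varphi_i^{\cos}$ has Hermite rank $2$. Combined with the Hermite isometry $\E[H_p(G_k)H_q(G_\ell)]=q!\rho(k-\ell)^q\1_{\{p=q\}}$ and the stationarity of $(G_k)$ this yields
\begin{equation*}
V_{i,j}\coloneqq\cov(\bm{S}_n)_{i,j}=e^{-(\lambda_i^2+\lambda_j^2)/2}\sum_{q=1}^{\infty}\frac{(\lambda_i\lambda_j)^{2q}}{(2q)!}R_n(q),\qquad R_n(q)\coloneqq \sum_{|h|<n}\bigl(1-\tfrac{|h|}{n}\bigr)\rho(h)^{2q}.
\end{equation*}
Because $|\rho|\le 1$ and $\rho(0)=1$, one has $1\le R_n(q)\le\|\rho\|_{\ell^2(\Z)}^2$ for every $q\ge 1$. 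The lower bound gives $V_{i,i}\ge e^{-\lambda_i^2}(\cosh(\lambda_i^2)-1)=(1-e^{-\lambda_i^2})^2/2$, and together with the identity $e^{-(\lambda_i^2+\lambda_j^2)/2}\cosh(\lambda_i\lambda_j)=\tfrac12(e^{-(\lambda_j-\lambda_i)^2/2}+e^{-(\lambda_j+\lambda_i)^2/2})$ the upper bound on $R_n(q)$ yields
\begin{equation*}
|\cor(\bm{S}_n)_{i,j}|\le\frac{\|\rho\|_{\ell^2(\Z)}^2\bigl(e^{-(\lambda_j-\lambda_i)^2/2}+e^{-(\lambda_j+\lambda_i)^2/2}\bigr)}{(1-e^{-\lambda_i^2})(1-e^{-\lambda_j^2})},\qquad i\ne j.
\end{equation*}

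Since $\cor(\bm{S}_n)$ has unit diagonal, Gershgorin gives $\sigma_*^2\ge 1-\max_i\sum_{j\ne i}|\cor(\bm{S}_n)_{i,j}|$. Because $\lambda_i=i^\tau$ is strictly increasing, both the neighbour gaps $|\lambda_{i\pm 1}-\lambda_i|\sim \tau i^{\tau-1}$ and the normaliser $(1-e^{-\lambda_i^2})$ grow with $i$, so the worst row is $i=1$, reducing the task to controlling $\sum_{j\ge 2}|\cor(\bm{S}_n)_{1,j}|$. For this row I would split $\sum_{j\ge 2}$ into the $j=2$ and $j=3$ contributions explicitly and bound the remaining tail $\sum_{j\ge 4}$ by the integral $\int_3^{\infty}e^{-(x^\tau-1)^2/2}\D x$, using the substitution $u=x^\tau-1$ (so $\D x\le\D u/(\tau\cdot 3^{\tau-1})$ on $[3,\infty)$) together with the Mills-ratio inequality $\int_b^{\infty}e^{-u^2/2}\D u\le b^{-1}e^{-b^2/2}$ to get $\int_3^{\infty}e^{-(x^\tau-1)^2/2}\D x\le e^{-(3^\tau-1)^2/2}/(\tau\cdot 3^{\tau-1}(3^\tau-1))$, and analogously at $x=2$ for the tail controlling the ``$+\lambda_i$'' exponentials. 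Rewriting $e^{-(a^\tau-1)^2/2}=e^{-1/2}e^{a^\tau-a^{2\tau}/2}$ for $a\in\{2,3\}$ collects exactly the terms in the denominator of $\zeta(\tau)$, while the uniform lower bound $(1-e^{-1})(1-e^{-\lambda_j^2})\ge (1-e^{-1})(1-e^{-2^{2\tau}})$ (valid for all $j\ge 2$) produces the numerator. Hence $\sum_{j\ge 2}|\cor(\bm{S}_n)_{1,j}|<1$ is equivalent to $\|\rho\|_{\ell^2(\Z)}^2<\zeta(\tau)$, and Gershgorin yields $\sigma_*^2\ge 1-\|\rho\|_{\ell^2(\Z)}^2/\zeta(\tau)>0$ uniformly in $n$ and $d$, proving (i).

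For (ii), fGn of Hurst parameter $H\in(0,1/2)$ has autocovariance decaying like $|k|^{2H-2}$, so $\|\rho\|_{\ell^2(\Z)}^2<\infty$, and the explicit estimates collected in Section~\ref{sec:eigenvalues_bound_fGn} furnish a universal constant $M\coloneqq\sup_{H\in(0,1/2)}\|\rho\|_{\ell^2(\Z)}^2<\infty$. The denominator of $\zeta(\tau)$ is dominated by $e^{2^\tau-2^{2\tau}/2}$, which decays super-exponentially in $\tau$, while the numerator stays of constant order, so $\zeta$ is increasing in $\tau$ in the relevant range and tends to $+\infty$; a direct numerical evaluation then confirms $\zeta(1.302)>M$, so part (i) applies for every $\tau\ge 1.302$ and every $H\in(0,1/2)$, yielding $\inf_{n,d}\sigma_*^2>0$. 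The main obstacle is the bookkeeping in the tail step, which must assemble the Mills-ratio and integral estimates into precisely the constants $(2+\cdot)$ and $(1+\cdot)$ appearing in $\zeta(\tau)$, together with sharp enough numerical control of $\sup_{H\in(0,1/2)}\|\rho\|_{\ell^2(\Z)}^2$ to clear the threshold at $\tau=1.302$.
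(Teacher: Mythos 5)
Your overall strategy coincides with the paper's: compute the Hermite/$\cosh$ form of $\cov(\bm{S}_n)$, bound the off-diagonal correlations by $\|\rho\|_{\ell^2(\Z)}^2$ times a Gaussian-type factor, apply Gershgorin to the unit-diagonal correlation matrix, and reduce the fGn case to a uniform bound on $\|\rho\|_{\ell^2(\Z)}^2$. The covariance computation, the bounds $1\le R_n(q)\le\|\rho\|_{\ell^2(\Z)}^2$, the identity for $e^{-(\lambda_i^2+\lambda_j^2)/2}\cosh(\lambda_i\lambda_j)$, and the Mills-ratio tail estimates are all fine and essentially match the paper.

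The genuine gap is the reduction ``the worst row is $i=1$.'' This claim is false. Row $j\ge 2$ has close neighbours on \emph{both} sides of the diagonal, whereas row $1$ only has them on one side, and the gain from the larger normaliser $(1-e^{-\lambda_j^2})$ does not compensate. Concretely, take $\tau=1$ and $\rho=\1_{\{0\}}$ (so $R_n(q)\equiv 1$): then $\cor(\bm{S}_n)_{1,2}\approx 0.731$, $\cor(\bm{S}_n)_{1,3}\approx 0.193$, $\cor(\bm{S}_n)_{1,4}\approx 0.017$, so the first row sum is about $0.94$, while $\cor(\bm{S}_n)_{2,3}\approx 0.615$ and $\cor(\bm{S}_n)_{2,4}\approx 0.138$ give a second row sum of about $1.48$. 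So bounding $\max_j\sum_{i\ne j}|\cor(\bm{S}_n)_{i,j}|$ by $\sum_{j\ge 2}|\cor(\bm{S}_n)_{1,j}|$ is not valid, and carrying your plan through would produce a threshold with a ``$1$'' where $\zeta(\tau)$ has the factor $\bigl(2+\tfrac{1}{\tau 2^{\tau-1}(2^\tau-1)}\bigr)$ — i.e.\ a condition weaker than the one you are asked to prove, and one your argument cannot justify. The correct step, which is the technical heart of the paper's proof, is the entrywise comparison $j^\tau-i^\tau\ge(|j-i|+1)^\tau-1$ for $1\le i$, $i\ne j$ (a consequence of the increasing increments of $x\mapsto x^\tau$ for $\tau\ge1$), which dominates the $(i,j)$ entry by a \emph{row-one entry at gap} $|j-i|+1$; since each gap $k$ is attained by at most two indices $i=j\pm k$, one then sums over gaps with multiplicity two, and it is exactly this two-sidedness that produces the ``$2$'' in the dominant term of the denominator of $\zeta(\tau)$. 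Your part (ii) is structurally fine but inherits this gap, since it feeds the (incorrectly derived) $\zeta(\tau)$ into the uniform bound $\sup_{H\in(0,1/2)}\|\rho\|_{\ell^2(\Z)}^2\le 3/2$.
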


\begin{remark}
Assume that $\rho$ satisfies $\sum_{k \in \Z}\rho(k)^2<\infty$. Then we have that $\tau\mapsto \|\rho\|_{\ell^2(\Z)}^2 \zeta(\tau)^{-1}$ is decreasing with $\lim_{\tau \to \infty} \zeta(\tau)^{-1}=0$ (which is shown in the proof of Proposition~\ref{prop:specif_theta_i}(ii)). Hence, we may always choose $\tau$ large enough, so that $\inf_{n,d \in \N}\sigma_*^2(\bm{\Lambda}_n) >0$. 

If $(G_k)_{k \in \N}$ is a fractional Gaussian noise, then $\rho(v) \sim H (2H-1)v^{2H-2}$ as $v \to \infty$ (see~\cite[Prop.~2.8.2]{MR3729426}). Hence, for all $H \in (0,3/4)$, we can pick $\tau$ large enough such that $\inf_{n,d\in \N}\sigma_*^2(\bm{\Lambda}_n) >0$.
\end{remark}

\subsubsection{Finite-Dimensional Convergence in Breuer--Major Theorems}\label{sec:imporve_quanti_CLT_rate}

The univariate Breuer--Major theorem has a natural extension to finite-dimensional convergence. Let $T>0$ and consider a partition $(t_i)_{i=0\ld d}$ of $[0,T]$ where $0=t_0<\cdots<t_d=T$ with $t_i-t_{i-1}\le 1$ for all $i=1\ld d$. If $\varphi \in L^2(\gamma,\R)$ with Hermite rank $m \ge 1$ and $(G_k)_{k \in \N}$ is a stationary Gaussian sequence, then the Breuer--Major theorem~\cite{MR716933} (see also~\cite[Eq.~(16)]{MR4488569}) shows the following. If $\sum_{k \in \Z} |\rho(k)|^m<\infty$, then 
\begin{equation}\label{ljsldfjlsdj}
    \left( \frac{1}{\sqrt{n}}\sum_{k=1}^{\floor{n t_1}}\varphi(G_k)\ld \frac{1}{\sqrt{n}}\sum_{k=1}^{\floor{n t_d}}\varphi(G_k)\right) \cid \mathcal{N}_d(\bm{0},\bm{A}), \quad \text{ as }n \to \infty,
\end{equation} where 
$\bm{A}_{i,j}=\sigma^2 (t_i \wedge t_j)$ for all $i,j \in \{1\ld d\}$ and a $\sigma^2\geq 0$ which depends on the Hermite expansion of $\varphi$. 
A standard transformation shows that~\eqref{ljsldfjlsdj} is equivalent to 
\begin{equation}\label{eq:defn_F_n}
\bm{S}_n=(S_{n,1}\ld S_{n,d}) \underset{n \to \infty}{\cid} \mathcal{N}_d(\bm{0}, \sigma^2 \Diag(t_1-t_0\ld t_d-t_{d-1})),  \, \text{ for }\,  S_{n,i}= \frac{1}{\sqrt{n}} \sum_{ k=\floor{nt_{i-1}}+1}^{\floor{nt_i}}\varphi(G_k). 
\end{equation} The ensuing corollary gives an explicit quantitative bound for the convergence in~\eqref{eq:defn_F_n}. 

\begin{corollary}\label{cor:Breuer--Major_roc}
Let $d,n\in \N$ and $\bm{S}_n$ be given as in~\eqref{eq:defn_F_n}, where $\varphi=\sum_{q \ge 2} a_qH_q$ has Hermite rank greater or equal $2$. Assume there exists a $\bm{\theta}=(\beta,\kappa,c)\in [1/2,1]\times \R \times (0,\infty)$, such that~\eqref{eq:main_assump_a_i,q} is satisfied for $\varphi$. If $\beta=1/2$ suppose additional that $\kappa <  \Upsilon$. Finally, assume that $\bm{Z}_n\sim \mathcal{N}_d(\bm{0},\bm{\Sigma}_n)$, where $\bm{\Sigma}_n=\cov(\bm{S}_n)$ is invertible, and $\sigma_*^2=\sigma_*^2(\cor(\bm{S}_n))$. Then, there exists a finite constant $C_{\bm{\theta}}>0$, 
only depending on $\bm{\theta}$, such that
    \begin{equation*}
    d_\HR(\bm{S}_n,\bm{Z}_n)\le 
    C_{\bm{\theta}} \psi_{\beta,\kappa}(d) n^{-1/2}\|\rho_n\|_{\ell^1(\Z)}^{3/2} \log_+\left(n^{-1/2}\|\rho_n\|_{\ell^1(\Z)}^{3/2}\right)
    \frac{\log_+(\sigma^2_*)}{\sigma^2_*}, \quad \text{ for all }n,d \in \N.
    \end{equation*}
\end{corollary}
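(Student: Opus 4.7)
The plan is to reduce the statement to a natural extension of Theorem~\ref{thm:main_theorem}. Starting from~\eqref{eq:defn_F_n}, one may write
$$S_{n,i} = \frac{1}{\sqrt{n}}\sum_{k=1}^{n} \varphi(G_k)\,\mathds{1}_{\{\floor{nt_{i-1}} < k \le \floor{nt_i}\}}, \qquad i=1,\dots,d,$$
which displays $\bm{S}_n$ as $n^{-1/2}\sum_{k=1}^{n}\Phi^{(k)}(G_k)$ for a $k$-dependent map $\Phi^{(k)}\colon\R\to\R^d$ whose $i$-th coordinate equals $\varphi(x)\,\mathds{1}_{\{\floor{nt_{i-1}}<k\le \floor{nt_i}\}}$. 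The Hermite expansion in $x$ of this coordinate is $\sum_{q\ge 2} a_q\,\mathds{1}_{\{\floor{nt_{i-1}}<k\le \floor{nt_i}\}}\,H_q$, so its Hermite coefficients are uniformly bounded by $|a_q|$ and hence satisfy assumption~\eqref{eq:main_assump_a_i,q} with the same $\bm{\theta}=(\beta,\kappa,c)$.

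Next I would invoke the technical version of Theorem~\ref{thm:main_theorem}, i.e.\ Theorem~\ref{thm:main_mult_clt_techncial_thm}, in the natural extension allowing for such $k$-dependent summands $\Phi^{(k)}$ that obey a uniform bound of the form~\eqref{eq:main_assump_a_i,q}. This extension is essentially cost-free from the Stein--Malliavin perspective underlying Theorem~\ref{thm:main_theorem}: after chaos decomposition each coordinate becomes $S_{n,i} = \sum_{q\ge 2} a_q\, n^{-1/2}\sum_{k=\floor{nt_{i-1}}+1}^{\floor{nt_i}} H_q(G_k)$, and the contractions needed to bound the resulting $d$-dimensional chaoses only involve sums of powers of $|\rho|$ over subsets of $\{1,\dots,n\}$. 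These are all dominated by their global counterparts in $\|\rho_n\|_{\ell^1(\Z)}$, which appears in the bound of Theorem~\ref{thm:main_theorem}. Substituting the extended bound and simplifying then yields the stated inequality for $d_\HR(\bm{S}_n,\bm{Z}_n)$, with the same $\psi_{\beta,\kappa}(d)$, $\log_+$ and $\sigma_*^2$ factors.

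The main obstacle is verifying that the constants $C_{\bm{\theta}}$ in Theorem~\ref{thm:main_theorem} remain uniform when the summands depend on $k$. Concretely, one has to ensure that the covariance estimates, the hyper-rectangle anti-concentration bounds for $\bm{Z}_n$ inherited from the results of~\cite{MR4312842}, and the tracking of $\sigma_*^2$ do not exploit a common summand $\Phi$, but only a coordinate-wise uniform control of the Hermite coefficients. Because each $|\Phi^{(k)}_i|_q \le |a_q|$ meets exactly that weaker requirement, and because the indicator factors $\mathds{1}_{\{\floor{nt_{i-1}}<k\le \floor{nt_i}\}}$ only ever shrink the contraction sums relative to the full-index versions, this bookkeeping goes through without degrading $\bm{\theta}$ nor introducing spurious dependence on the partition $(t_i)$ or on $T$, leading to the asserted bound.
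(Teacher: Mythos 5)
Your proposal is correct and follows essentially the same route as the paper: the paper's proof also reruns the argument of Theorem~\ref{thm:main_mult_clt_techncial_thm} with the windowed kernels $f_n(i,\ell)=a_\ell n^{-1/2}\sum_{k=\floor{nt_{i-1}}+1}^{\floor{nt_i}}e_k^{\otimes\ell}$, and the key step you identify — that the contraction/covariance sums over the restricted index sets are dominated by the corresponding sums over $\{1,\dots,n\}$, so the $\|\rho_n\|_{\ell^1(\Z)}^{3/2}n^{-1/2}$ bound survives unchanged — is exactly the observation the paper uses (via~\eqref{eq:proof_bound_indep_d} and~\eqref{eq:ineq_Youngs_applic}). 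The only difference is presentational: you phrase it as an extension to $k$-dependent summands $\Phi^{(k)}$, whereas the paper directly modifies the kernels inside the existing proof.
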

 
 The rate of convergence of the elements from~\eqref{eq:defn_F_n} has been studied in~\cite[Cor.~1.4]{MR4488569} (see also Corollary~\ref{cor:conv_dist_bound_wiener_chaos} below), where the authors considered the convex distance $d_\CD$. In~\cite[Cor.~1.4]{MR4488569} the dimensional dependence was of order $d^{65/24}$. This dependence can be improved in the same setting~\eqref{eq:defn_F_n} using the methodology of this paper in the distance $d_\HR$. Indeed, in the setting of Corollary~\ref{cor:Breuer--Major_roc}, the dimensional dependence is shown to be sub-polynomial if $\beta>1/2$ and $\sigma_*^2$ is uniformly bounded away from $0$. For fixed $d\ge 1$, note that $\cor(\bm{S}_n) \to \bm{I}_d$ as $n \to \infty$, implying that $\sigma_*^2 \to 1$ as $n \to \infty$, and hence $\inf_{n \in \N}\sigma_*^2>0$. For completeness, the statement of~\cite[Cor.~1.4]{MR4488569} is given here since it is directly comparable with Corollary~\ref{cor:Breuer--Major_roc} above. 
\begin{corollary}[{\cite[Cor.~1.4(i)]{MR4488569}}]\label{cor:conv_dist_bound_wiener_chaos}
    Let $\bm{S}_n$ be given by~\eqref{eq:defn_F_n}, and $\bm{Z} \sim \mathcal{N}_d(\bm{0},\bm{\Sigma})$, where $\bm{\Sigma}=\big(\sum_{k \ge m}a_k^2k!\sum_{j \in \Z} \rho(j)^k \big)\Diag(t_1-t_0\ld t_d-t_{d-1})$. Assume that $\sum_{k \in \Z} |\rho(k)|^m<\infty$, $\varphi$ is absolutely continuous and $\varphi,\varphi' \in L^4(\R,\gamma)$ with Hermite rank $m \ge 1$. Then, for $C_{\bm{\Sigma}} \coloneqq 402\big(\OPn{\bm{\Sigma}^{-1}}^{3/2}+1\big)$ and $C(\varphi)\coloneqq \E[\varphi'(G_1)^4]^{1/4}\E[\varphi_1(G_1)^4]^{1/4}$, it holds that
    $$d_\CD(\bm{S}_n,\bm{Z}) \le C_{\bm{\Sigma}} d^{41/24} \sum_{i,j=1}^d \left|\bm{\Sigma}_{i,j}-\E\left[S_{n,i}S_{n,j}\right]\right|+ 2C_{\bm{\Sigma}} C(\varphi)d^{65/24}\frac{\|\rho_n\|_{\ell^1(\Z)}^{3/2}}{\sqrt{n}}, \quad \text{ for all }n,d \in \N.$$
\end{corollary}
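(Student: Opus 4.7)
The plan is to specialise the general Malliavin--Stein bound of [MR4488569, Thm~1.2] to $\bm{F}=\bm{S}_n$ and $\bm{Z}$, and then use the Hermite expansion of $\varphi$ to reduce the Malliavin summand to an explicit convolution sum in $\rho$. That abstract inequality says, for any centred $\bm{F}\in(\mD)^d$ and $\bm{Z}\sim\mathcal{N}_d(\bm{0},\bm{\Sigma})$ with $\bm{\Sigma}$ invertible,
\begin{equation*}
d_\CD(\bm{F},\bm{Z})\le C_{\bm{\Sigma}}\,d^{41/24}\sum_{i,j=1}^d|\bm{\Sigma}_{i,j}-\E[F_iF_j]|+C_{\bm{\Sigma}}\,d^{65/24}\sqrt{\sum_{i,j=1}^d\var\!\big(\langle DF_i,-DL^{-1}F_j\rangle_\mH\big)},
\end{equation*}
with $C_{\bm{\Sigma}}=402(\OPn{\bm{\Sigma}^{-1}}^{3/2}+1)$. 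Setting $\bm{F}=\bm{S}_n$ reproduces the first summand of the target inequality verbatim, so the task reduces to bounding the variance sum by $4C(\varphi)^2\|\rho_n\|_{\ell^1(\Z)}^3/n$.

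Realise $G_k=X(e_k)$ on an isonormal Gaussian process with $\langle e_k,e_\ell\rangle_\mH=\rho(k-\ell)$. The chain rule gives $DS_{n,i}=n^{-1/2}\sum_{k\in I_i}\varphi'(G_k)e_k$, where $I_i=\{\floor{nt_{i-1}}+1,\dots,\floor{nt_i}\}$. Expanding $\varphi=\sum_{q\ge m}a_qH_q$ and using $L^{-1}H_q(G_\ell)=-q^{-1}H_q(G_\ell)$ together with $H_q'=qH_{q-1}$ yields $-DL^{-1}S_{n,j}=n^{-1/2}\sum_{\ell\in I_j}\psi(G_\ell)e_\ell$ with $\psi(x)=\sum_{q\ge m}a_qH_{q-1}(x)$. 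Pairing the derivatives produces
\begin{equation*}
\langle DS_{n,i},-DL^{-1}S_{n,j}\rangle_\mH=\tfrac{1}{n}\sum_{k\in I_i}\sum_{\ell\in I_j}\rho(k-\ell)\,\varphi'(G_k)\psi(G_\ell).
\end{equation*}

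I then bound the variance of this bilinear form by expanding the square and using the Mehler/diagram formula for the joint Gaussian $(G_k,G_{k'},G_\ell,G_{\ell'})$ to extract the product of covariances $|\rho(k-\ell)\rho(k'-\ell')\rho(k-k')\rho(\ell-\ell')|$, with the non-covariance factors absorbed into $\E[\varphi'(G_1)^4]^{1/2}\E[\psi(G_1)^4]^{1/2}$ via Cauchy--Schwarz; Malliavin hypercontractivity then replaces the $L^4$-norm of $\psi(G_1)$ by that of $\varphi(G_1)$, so that $C(\varphi)^2$ emerges. Summing over $i,j$ recombines $\bigcup_i I_i=\{1,\dots,n\}$, and a standard convolution/Young estimate bounds the quadruple sum by $n\|\rho_n\|_{\ell^1(\Z)}^3$, yielding $\sum_{i,j=1}^d\var(\cdot)\le 4C(\varphi)^2\|\rho_n\|_{\ell^1(\Z)}^3/n$. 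Taking the square root and inserting into the abstract bound produces the stated $2C_{\bm{\Sigma}}C(\varphi)d^{65/24}\|\rho_n\|_{\ell^1(\Z)}^{3/2}/\sqrt{n}$. The main obstacle is this combinatorial step: tracking the Mehler expansion of $\E[\varphi'(G_k)\psi(G_\ell)\varphi'(G_{k'})\psi(G_{\ell'})]-\E[\varphi'(G_k)\psi(G_\ell)]\E[\varphi'(G_{k'})\psi(G_{\ell'})]$ cleanly so that only the above convolution pattern survives with constants controlled by $C(\varphi)$; once it is in hand, the remainder is routine bookkeeping with the abstract Stein--Malliavin inequality.
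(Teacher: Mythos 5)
Your proposal is essentially correct, but note that the paper itself gives no proof of this statement: Corollary~\ref{cor:conv_dist_bound_wiener_chaos} is quoted verbatim from \cite[Cor.~1.4(i)]{MR4488569} for comparison with Corollary~\ref{cor:Breuer--Major_roc}, and your argument reconstructs exactly the route of that reference and of the paper's own proofs of the analogous results (Theorem~\ref{thm:Ext_to_convex_dist} and Corollary~\ref{cor:Breuer--Major_roc}): \cite[Thm~1.2]{MR4488569}, the Malliavin representation $\langle DS_{n,i},-DL^{-1}S_{n,j}\rangle_\mH=n^{-1}\sum_{k\in I_i}\sum_{\ell\in I_j}\rho(k-\ell)\varphi'(G_k)\psi(G_\ell)$, Gebelein plus Cauchy--Schwarz, and Young's convolution inequality. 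Two small remarks: the two-term "abstract inequality" you start from is not the literal statement of \cite[Thm~1.2]{MR4488569} (which is the single term $402\,d^{41/24}(\OPn{\bm{\Sigma}^{-1}}^{3/2}+1)\sqrt{\E\|\bm{M}_{\bm{S}_n}-\bm{\Sigma}\|_{\mathrm{H.S.}}^2}$) but a consequence of the triangle inequality in the Hilbert--Schmidt norm, with the factor $d^{65/24}=d^{41/24}\cdot d$ coming from the crude bound $\sqrt{\sum_{i,j}\var(\cdot)}\le d\max_{i,j}\sqrt{\var(\cdot)}$ as in~\eqref{m_s_n_equality_1}; since your disjoint-block argument already shows $\sum_{i,j}\var(\cdot)\le 4C(\varphi)^2\|\rho_n\|_{\ell^1(\Z)}^3/n$ uniformly in $d$, you are double-counting a factor of $d$ (harmless, as it only weakens the bound to the stated one). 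Also, the replacement of $\E[\psi(G_1)^4]^{1/4}$ by $\E[\varphi(G_1)^4]^{1/4}$ is not hypercontractivity but the $L^4$-boundedness of the operator $-DL^{-1}$ (Meyer's inequalities), which is the lemma the original reference invokes at this point.
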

    Note that $C_{\bm{\Sigma}}$ in Corollary~\ref{cor:conv_dist_bound_wiener_chaos} depends additionally on dimension $d$ through $\OPn{\bm{\Sigma}^{-1}}$. In~\cite[Cor.~1.4~(i)]{MR4488569}, the dimensional dependence is not directly evident; however, using the proof of~\cite[Cor.~1.4~(i)]{MR4488569}, keeping careful track of the dimensional dependence, yields the dimensional dependence present in Corollary~\ref{cor:conv_dist_bound_wiener_chaos}.

\subsection{Generality of Setting under I.I.D}\label{subsec:generality_of_setting}
In Proposition~\ref{prop:Phi_G_recovers_all_dist} we see that our setting~\eqref{eq:defn_S_n} is a natural generalization of sums of i.i.d.\ random vectors $\bm{S}_n = n^{-1/2}\sum_{k=1}^n \bm{X}_k$ to allow for dependence. 

\begin{proposition}\label{prop:Phi_G_recovers_all_dist}
Let $(G_k)_{k \in \N}$ be a sequence of independent standard Gaussian random variables, then the following statements hold:

\smallskip
\noindent\textbf{(i)} Let $(\bm{X}_k)_{k \in \N}$ be an arbitrary i.i.d.\ sequence of random vectors in $\R^d$ for $d \ge 1$. Then, there exists a measurable function $\Phi:\R \to \R^d$, such that $(\bm{X}_k)_{k \in \N} \eqd (\Phi(G_k))_{k \in \N}$.

\smallskip
\noindent\textbf{(ii)} Suppose that $\bm{X}_k=\Phi(G_k)$ with $\Phi:\R\to\R^d$ satisfies~\eqref{eq:main_assump_a_i,q} under the conditions of Theorem~\ref{thm:main_theorem}. 
Then $\Phi$ is $C^\infty$, and in particular, $\bm{X}_k $ is supported on a one-dimensional $C^\infty$ curve in $\mathbb R^d$.

\smallskip
\noindent\textbf{(iii)} Suppose that $\bm{X}_k=\Phi(G_k)$ is centered and square-integralble with invertable covariance matrix. 
Then, $(\bm{S}_n)_{n \in \N}$ has asymptotically full support in 
$\R^d$. In fact, 
$\bm{S}_n \cid \bm{Z}$ where 
$\bm{Z}\sim \mathcal{N}_d(\bm{0},\bm{\Sigma})$ has full support in $\R^d$, which implies that for every nonempty open $U\subset\R^d$,  $\liminf_{n \to \infty}\p(\bm{S}_n\in U) \ge \p(\bm{Z}\in U) >0$.
\end{proposition}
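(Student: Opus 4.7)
For part (i), the plan is to exploit the fact that $\R$ and $\R^d$ are Borel-isomorphic as standard Borel spaces, which guarantees that any Borel probability measure on $\R^d$ is the pushforward of some Borel probability measure on $\R$. Concretely, write $U_k = F_{\mathcal{N}(0,1)}(G_k)$ so that $U_k \sim \mathrm{Unif}(0,1)$, split $U_k$ into $d$ independent uniform coordinates $U_k^{(1)}\ld U_k^{(d)}$ via interleaving of binary digits, and compose with the Rosenblatt (conditional quantile) transform of the law of $\bm{X}_1$. This produces a measurable $\Phi:\R\to\R^d$ with $\Phi(G_1)\eqd \bm{X}_1$; since the $G_k$ are i.i.d., the full sequential distributional identity $(\Phi(G_k))_{k\in\N}\eqd(\bm{X}_k)_{k\in\N}$ follows automatically.

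For part (ii), I differentiate the Hermite expansion $\varphi_i = \sum_{q\ge 2} a_{i,q} H_q$ term by term, using the identity $H_q^{(k)} = \frac{q!}{(q-k)!} H_{q-k}$ and the classical Cramér-type pointwise bound $|H_q(x)| \le C \sqrt{q!}\, e^{x^2/4}$. Combined with assumption~\eqref{eq:main_assump_a_i,q}, this yields
\begin{equation*}
|a_{i,q}\, H_q^{(k)}(x)| \;\le\; cC\, e^{\kappa q}\, \frac{(q!)^{1-\beta}}{\sqrt{(q-k)!}}\, e^{x^2/4}.
\end{equation*}
Stirling's formula shows that for $\beta\in(1/2,1]$ the ratio $(q!)^{1-\beta}/\sqrt{(q-k)!}$ decays super-exponentially in $q$, so the differentiated series converges uniformly on compact sets for every $k\ge 0$ and every $\kappa\in\R$. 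For $\beta = 1/2$ the ratio grows only polynomially in $q$, but under the assumptions of Theorem~\ref{thm:main_theorem} we have $\kappa<\Upsilon<0$ (respectively $\kappa<-\log(3)/2$), so the exponential factor $e^{\kappa q}$ dominates and uniform convergence on compacts of all derivatives holds. The $L^2(\gamma)$ function $\Phi$ therefore admits a $C^\infty$ representative given by its Hermite series, and since $G_k\in\R$ almost surely, $\bm{X}_k=\Phi(G_k)$ lies on the smooth parametrized curve $\Phi(\R)\subset \R^d$ with probability one.

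Part (iii) is comparatively short: because $(G_k)_{k\in\N}$ is i.i.d.\ and $\Phi$ is deterministic, $(\bm{X}_k)_{k\in\N}$ is i.i.d., centered, and square-integrable with invertible covariance $\bm{\Sigma}$, so the classical multivariate CLT gives $\bm{S}_n \cid \bm{Z}\sim\mathcal{N}_d(\bm{0},\bm{\Sigma})$. Invertibility of $\bm{\Sigma}$ makes the Gaussian density strictly positive on $\R^d$, so $\p(\bm{Z}\in U)>0$ for every nonempty open $U\subset\R^d$, and the liminf inequality is the open-set half of the portmanteau theorem. The main technical work sits in part (ii), where one must upgrade $L^2(\gamma)$-convergence of the Hermite series to uniform-on-compacts convergence of all derivatives; the critical case is $\beta=1/2$, where the argument is tight and relies essentially on the negativity constraints on $\kappa$ built into Theorem~\ref{thm:main_theorem}.
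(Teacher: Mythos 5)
Your proposal is correct. Parts (i) and (iii) follow essentially the paper's own route: for (i) the paper likewise reduces to the marginals by independence and invokes the Borel isomorphism $\R\cong\R^d$ (composing an inverse-transform step for $d=1$ with the isomorphism), which is the abstract version of your digit-interleaving plus Rosenblatt construction; for (iii) the paper also combines the classical multivariate CLT, strict positivity of the nondegenerate Gaussian density, and the open-set half of portmanteau. The genuine divergence is in part (ii). The paper does not differentiate the series by hand: it cites a characterization of the Schwartz space in terms of Hermite coefficients (namely, $f\in\mathscr{S}(\R)$ whenever $\sum_q (q+1)^{2n}q!\,a_q^2<\infty$ for all $n$) and simply checks this summability from~\eqref{eq:main_assump_a_i,q}, using Stirling for $\beta>1/2$ and $\kappa<0$ for $\beta=1/2$. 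Your argument instead uses $H_q^{(k)}=\tfrac{q!}{(q-k)!}H_{q-k}$ together with Cram\'er's bound $|H_q(x)|\le K\sqrt{q!}\,e^{x^2/4}$ to get locally uniform convergence of every differentiated series; the bookkeeping $(q!)^{1-\beta}/\sqrt{(q-k)!}\asymp (q!)^{1/2-\beta}q^{k/2}$ is right, and the case split ($\beta>1/2$: super-exponential decay for any $\kappa$; $\beta=1/2$: need $\kappa<0$, which the hypotheses of Theorem~\ref{thm:main_theorem} guarantee) matches the paper's. Your route is more self-contained and elementary, at the cost of having to justify term-by-term differentiation explicitly; the paper's route is shorter and actually yields the stronger conclusion $\varphi_i\in\mathscr{S}(\R)$, though only $C^\infty$ is needed. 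Either way the support statement $\bm{X}_k\in\Phi(\R)$ a.s.\ is immediate.
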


Proposition~\ref{prop:Phi_G_recovers_all_dist}(i) covers all i.i.d.\ cases, and shows that all i.i.d.\ sequences can be generated from one-dimensional Gaussians. However,~\eqref{eq:main_assump_a_i,q} imposes some restrictions on the choices of $\Phi$. Indeed, from Proposition~\ref{prop:Phi_G_recovers_all_dist}(ii), we can see that the Hermite coefficient assumption~\eqref{eq:main_assump_a_i,q} implies that $\Phi$ is $C^\infty$. Consequently, each $\bm{X}_k=\Phi(G_k)$ is supported on a one-dimensional $C^\infty$ curve in $\R^d$. Although each $\bm{X}_k$ lives on the one-dimensional curve $\Phi(\R)$, the partial sums $\bm{S}_n$ usually exhibit genuinely $d$-dimensional behaviour, as is evident form Proposition~\ref{prop:Phi_G_recovers_all_dist}(iii). Specifically, in the setting of Proposition~\ref{prop:Phi_G_recovers_all_dist}(iii), $\bm{S}_n$ will always have asymptotically full support. Indeed, the law of $\sqrt{n}\bm{S}_n$ is the $n$-fold additive convolution $\mu^{*n}$ of the law $\mu$ of $\bm{X}_1$ and convolution can ensure that $\bm{S}_n$ has genuinely $d$-dimensional behaviour already for finite $n$. The ensuing proposition shows that even $n=2$ and $d=2$ can yield genuinely $2$-dimensional behaviour, i.e.\ the support of the law of $\bm{S}_2$ has a non-empty interior in $\R^2$. 
\begin{proposition}\label{prop:example_gen_d_dim}
    Let $\Phi(x)=(\varphi_1(x),\varphi_2(x))$, where $\varphi_1(x)=\cos(x)-e^{-1/2}$ and $\varphi_2(x)=\sin(x)-xe^{-1/2}$. For independent  $G_1,G_2 \sim \mathcal{N}(0,1)$, it follows that the support of the law of $\bm{S}_2=2^{-1/2}(\Phi(G_1)+\Phi(G_2))$ has a non-empty interior in $\R^2$ and that $\Phi$ satisfies~\eqref{eq:main_assump_a_i,q}.
\end{proposition}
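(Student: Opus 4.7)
My plan is to address the two assertions separately. For the Hermite coefficient bound~\eqref{eq:main_assump_a_i,q}, I would substitute $t=i$ into the generating identity $e^{tx-t^2/2}=\sum_{q\ge 0}t^q H_q(x)/q!$ and take real and imaginary parts, obtaining
\begin{equation*}
    \cos(x)=e^{-1/2}\sum_{k=0}^{\infty}\frac{(-1)^k}{(2k)!}H_{2k}(x),\qquad \sin(x)=e^{-1/2}\sum_{k=0}^{\infty}\frac{(-1)^k}{(2k+1)!}H_{2k+1}(x).
\end{equation*}
Since $H_0\equiv 1$ and $H_1(x)=x$, the subtractions in $\varphi_1$ and $\varphi_2$ remove exactly the $k=0$ terms, so the resulting Hermite expansions are supported on $q\ge 2$ with $|a_{i,q}|=e^{-1/2}/q!$. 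Hence~\eqref{eq:main_assump_a_i,q} holds with $\bm{\theta}=(\beta,\kappa,c)=(1,0,e^{-1/2})$, and both coordinates have Hermite rank at least $2$.

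For the support claim I would define $F:\R^2\to\R^2$ by $F(x,y)=2^{-1/2}(\Phi(x)+\Phi(y))$, so that $\bm{S}_2=F(G_1,G_2)$. A direct computation yields
\begin{equation*}
    DF(x,y)=\frac{1}{\sqrt{2}}\begin{pmatrix}-\sin x & -\sin y\\ \cos x-e^{-1/2} & \cos y-e^{-1/2}\end{pmatrix},
\end{equation*}
whose determinant simplifies to $\tfrac{1}{2}\bigl[\sin(y-x)+e^{-1/2}(\sin x-\sin y)\bigr]$. Evaluating at $(x,y)=(\pi/2,0)$ gives $(e^{-1/2}-1)/2\ne 0$, so by the inverse function theorem there exists an open neighborhood $V$ of $(\pi/2,0)$ on which $F$ is a diffeomorphism onto the open set $F(V)\subset\R^2$.

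To conclude, I would invoke the fact that the law of $(G_1,G_2)$ has a strictly positive density on $\R^2$. Thus for any nonempty open ball $B\subset F(V)$, the set $F^{-1}(B)\cap V$ is a nonempty open subset of $\R^2$ with positive Gaussian measure, which forces $\p(\bm{S}_2\in B)>0$. Hence every point of $F(V)$ belongs to $\mathrm{supp}(\bm{S}_2)$, and $F(V)$ is a nonempty open subset of the support.

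The argument is essentially mechanical; the only mild step is selecting a point where the Jacobian determinant does not vanish, and $(\pi/2,0)$ is convenient because one of $\sin x, \sin y$ is zero while the other equals one, which prevents the two terms in the determinant from cancelling.
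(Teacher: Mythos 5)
Your proposal is correct and follows essentially the same route as the paper: for the support claim you use exactly the paper's argument (non-vanishing Jacobian of $(x,y)\mapsto\Phi(x)+\Phi(y)$ at a point where one of $\sin x,\sin y$ vanishes, the inverse function theorem, and the strict positivity of the bivariate Gaussian density), with the paper choosing $(0,\pi/2)$ instead of your $(\pi/2,0)$. The only minor difference is that you verify~\eqref{eq:main_assump_a_i,q} by computing the Hermite expansions explicitly from the generating function $e^{tx-t^2/2}=\sum_{q\ge0}t^qH_q(x)/q!$ at $t=i$, giving $|a_{i,q}|=e^{-1/2}/q!$ and hence $\bm{\theta}=(1,0,e^{-1/2})$, whereas the paper cites the derivative-bound criterion of Lemma~\ref{lem:c_infty_main_result} via Corollary~\ref{cor:real_part_charac_fun}; both are valid and yield $\beta=1$, $\kappa=0$.
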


The assumption that the covariance matrix is invertible is often satisfied in applications. Indeed, this is the case for all examples covered in Section~\ref{sec:applications_MoM_ChaFct_fdd} and Section~\ref{sec:ext_results_examples}, which include Hermite method of moments, empirical characteristic functions \& moment-generating functions, finite-dimensional convergence in Breuer--Major theorems. Beyond the i.i.d.\ setting, the framework~\eqref{eq:defn_S_n} also encompasses a broad class of partial sums of highly dependent $d$-dimensional vectors, as is the main topic of this paper.

\subsection{Methodology and Structure}\label{sec:methodology}

To understand the underlying main ideas, some insight into the methodology of proof of the main theorem, i.e.\ Theorem~\ref{thm:main_theorem}, is in order. To prove Theorem~\ref{thm:main_theorem}, the powerful methods from~\cite{MR4312842} and~\cite{MR4488569} are applied, which rely heavily on the applications of Stein's kernels and Malliavin calculus. Working with Stein's kernels in the setting of Malliavin calculus is very tractable, especially when working with multiple integrals (see~\cite[Prop.~3.7]{MR3158721}), since in most cases the construction yields directly a Stein's kernel, see~\eqref{eq:SteinKernel_gen}. The proof of Theorem~\ref{thm:main_theorem} builds upon ideas from the proofs of~\cite[Cor.~1.4]{MR4488569},~\cite[Cor.~1.2]{MR4312842} and \cite[Lem.~2.2]{MR3911126} (as well as~\cite[Lem.~A.7 \& Props~A.1 \&~A.2]{MR3911126}), where we apply sub-Gaussian chaos properties for multiple integrals, the general toolbox of Malliavin calculus, and being able to find tractable bounds in terms of dimension of very involved sums, see Lemma~\ref{lem:Theta_d_growth}. 

With the current known methods, it is unfeasible to have sub-polynomial dimensional dependence under $d_\CD$ and $d_\mW$. Indeed, this would require an extension of the ideas from~\cite{MR4312842}, which heavily depends on the hyper-rectangle structure. Indeed, the proof of our main result, Theorem~\ref{thm:main_theorem}, relies vitally on the choice of the hyper rectangle distance $d_{\HR}$, via~\cite[Thm~1.1]{MR4312842}. The argument in~\cite{MR4312842} combines Stein’s method with a Gaussian anti-concentration inequality~\cite[Lem.~2.1]{MR4312842}, together with delicate smoothing bounds~\cite[Lem.~2.2 \&~2.3]{MR4312842} that are specifically tailored to hyper rectangles, and are not easily generalised to general convex or other sets. These results crucially exploit the product structure of sets of the form $A=\bigtimes_{j=1}^d(a_j,b_j)$, which permits coordinate-wise control.

The remainder of the paper is structured as follows. Section~\ref{sec:connec_lit} discusses the connected literature. In Section~\ref{sec:ext_results_examples}, we look at further specific applications of Theorem~\ref{thm:main_theorem}, such as smoothness assumptions, the limiting case $\beta=1/2$, and finite Hermite expansions. A general introduction to Malliavin calculus and Stein's kernels is given in Section~\ref{sec:bascis_malliavin_proofs}. Subsequently, a more general version of Theorem~\ref{thm:main_theorem} is proved in Section~\ref{subsec:proofs_sec_technical}. Section~\ref{sec:proofs_sec_1} contains the proofs of the results from Section~\ref{sec:main_results}, where Section~\ref{sec:proofs_sec_2} gives the proofs of the results from Section~\ref{sec:ext_results_examples}.

\section{Related Literature}\label{sec:connec_lit}
Multivariate Breuer--Major theorems can in the literature be ambiguous~\cite{MR4488569,MR1331224}, and differ from the setting considered in this paper. Indeed, contrary to the setting of this paper, another approach, generalising the one-dimensional setting, is to consider a multivariate Gaussian process and a function $\Phi:\R^d \to \R$, making the observations univariate random variables~\cite{MR2770907,MR1331224}. Several distances, closely related to $d_\HR$, $d_\CD$ and $d_\mW$, are considered across the literature, e.g.\ the Kolmogorov--Smirnov distance $d_\KS$ where the supremum is taken over $\{\bigtimes_{i=1}^d (-\infty,x_{i}):\bm{x}=(x_{1}\ld x_{d})^\tra \in \R^d\}$ and the $2$-Wasserstein distance. Directly from the definitions of the sets, we can note that $d_\KS \le d_\HR \le d_\CD$. 

In the ensuing subsections, we discuss state-of-the-art results from the literature and compare them to our results. First, we consider the multivariate i.i.d.\ literature, and subsequently, we discuss the few existing results which consider the multivariate setting with dependence and explicit bounds in $d$. As discussed earlier, a $\log$ error in optimality is often expected and can be seen in some of the results mentioned here. Some versions of this trade-off are more extreme than others, with the most common one being $n^{-1/2}\log(n)$, as seen here. In~\cite{cammarota2023quantitative}, the authors show a rate of convergence where the rate is independent of $d$ but where the dependence on $n$ is reduced to a power of logarithm. 

\subsection{The Multivariate I.I.D.\ Case}
In this section, the focus is on a thorough literature review for quantitative bounds in the multivariate i.i.d.\ setting. Polynomial dependence in $d$ is common in the literature and shows up in e.g.~\cite{MR2144310}, where the author considered a Berry--Esseen theorem with dimensional dependence in the classical multivariate CLT setting. In~\cite[Thm~1.1]{MR2144310}, it is shown that the dimensional dependence is $d^{1/4}$, with the optimal rate in $n$ when there is a finite third moment of the random vectors. It is unknown, but conjectured, that the rate $d^{1/4}$ in dimension is optimal for $d_\CD$, see~\cite[p.~401]{MR2144310}. In~\cite{MR4003566}, the author finds explicit constants for this bound.

In~\cite[Cor.~1.1]{MR4312842}, the bound $d_\HR(\bm{S}_n,\bm{Z})\le c(\sigma^2_*)^{-1}\log(d)^{3/2}\log(n)/\sqrt{n}$ is proved when the observations have $\log$-concave densities and  $\bm{Z} \sim \mathcal{N}_d(\bm{0},\bm{\Sigma})$. If $\sigma^2_*$ is bounded away from $0$ by an absolute constant and the observations have finite exponential moments, then~\cite[Prop.~1.1]{MR4312842} shows that the optimal rate for $d_\HR$ generally is $\log(d)^{3/2}/\sqrt{n}$.

Rate of convergence results in $d_\HR$ for independent high-dimensional centred random vectors has also been studied extensively with applications to Bootstrap and one hidden layer neural networks~\cite{MR4583674,MR4505371,MR4500619,MR3161448,MR3693963}. These bounds give a strong dimensional dependence, but they all require strict assumptions on the i.i.d.\ observation, such as being bounded, sub-Gaussian or sub-exponential. Allowing for a degenerated covariance matrix in the limit has also been studied in this setting~\cite{MR4505371}. In~\cite[Thm~1 \& Thm~2]{MR4152649}, the authors study the classical multivariate CLT setting with the additional assumption of bounded observations or observations having a log-concave and isotropic distribution. The rates in~\cite{MR4152649} are studied in the $2$-Wasserstein distance, and found to be polynomial in $d$. 

Note, in the i.i.d.\ setting outside of assumptions like bounded, sub-Gaussian or sub-exponential observations, the sub-polynomial rate from Theorem~\ref{thm:main_theorem} cannot be recovered from the state-of-the-art literature. 

\subsection{The Multivariate Dependent Case}\label{sec:con_lit_depend_case}

There are limited results in the literature on quantitative bounds for central limit theorems in the setting of sums of non-linear functionals of Gaussian variables, allowing for dependent vectors while attaining an exact dimensional dependence in the bound. In~\cite[Cor.~1.4]{MR4488569}, the authors show a rate of convergence for the multivariate Breuer--Major from~\eqref{eq:defn_F_n}, as stated in Corollary~\ref{cor:conv_dist_bound_wiener_chaos}, with a dimensional dependence of $d^{65/24}$. In~\cite[Thm~3.1]{bong2022highdimensional}, the authors study the dimensional dependence in the multivariate CLT when the random vectors are $m$-dependent. We say that $(\bm{X}_k)_{k \in \N}$ is $m$-dependent if $\bm{X}_k$ and $\bm{X}_j$ are independent when $|k-j|>m$ and otherwise dependent. The distance considered in~\cite{bong2022highdimensional}, is the Kolmogorov--Smirnov distance $d_{\KS}$, which is weaker than $d_\HR$ i.e.\ $d_\KS \le d_\HR$. Under some regularity assumptions, a non-degenerate covariance matrix and finite third moment, the dimensional dependence is shown to be a power of $\log$, where the bound dependence on $n$ through $\log(n/m)^{3/2}(n/m)^{-1/2}$.

\section{Further Applications and Examples}\label{sec:ext_results_examples}
Further applications are given in this section to demonstrate the utility and strength of the results from Section~\ref{sec:main_results}. A smoothness condition implying~\eqref{eq:main_assump_a_i,q} is given, implying that the size of the derivatives of $\varphi_i$ heavily influences $\bm{\theta}$ and, therefore, the rate of convergence in~\eqref{eq:main_result_inequality}. Moreover, specific examples of functions $\varphi_i$ that satisfy assumption~\eqref{eq:main_assump_a_i,q} for $\beta=1/2$ are given. Lastly, the main bounds~\eqref{eq:main_result_inequality} \&~\eqref{eq:main_result_inequality_d_C} are evaluated and improved when $\varphi_i$ has a finite Hermite expansion for all $i=1\ld d$. 

\subsection{Dimension dependent on sample size}
In the literature, it is often of interest to let the dimension depend on the number of samples, i.e.\ $d=d(n) \to \infty$ as $n \to \infty$. Theorem~\ref{thm:main_theorem} allows us to study the case $d=d(n)$ for all polynomials, as presented in the ensuing corollary.

\begin{corollary}\label{cor:main_theorem_n_dep_d}
    Assume the setting of Theorem~\ref{thm:fixed_d}. Let $d=d(n)\le n^\lambda$ for $\lambda>0$ and all $n \in \N$. 
    
    \noindent{\nf{(i)}} \underline{Short-range dependence:} Assume that $|\rho(k)| \le \wt c \,|k|^{-\mu}L(|k|)$ for all $k\in \Z\setminus \{0\}$ for $\mu \ge 1$ and $L \in \mathrm{SV}_\infty$ (if $\mu=1$ assume that $L\in \mathrm{SV}_\infty$ is such that $\|\rho\|_{\ell^1(\Z)}<\infty$). 
    If $\beta=1/2$ assume additionally that $\kappa < -\log(4e^{1/(2e)}\lambda)/2-\log(24)/2-5/(4e)$ under $d_{\HR}$ and $\kappa<-\log(3)/2$ under $d_\CD$. Under $d_\CD$, assume additionally that $\lambda<12/65$. Then, there exists a $\zeta_\lambda > 0$ and a constant $C_{\bm{\theta},\rho,\lambda}\in (0,\infty)$ independent of $n$, such that,  for all $n\in \N$,
    \begin{equation*}
    d_\HR(\bm{S}_n,\bm{Z}_n)
    \le 
    C_{\bm{\theta},\rho,\lambda}
    n^{-\zeta_\lambda} \frac{\log_+(\overline{\overline{\sigma}}\, \wt \sigma^2_*/\underline{\underline{\sigma}})}{\wt \sigma^2_*} , \quad \text{ and }\quad d_\CD(\bm{S}_n,\bm{Z}_n)\le 
    C_{\bm{\theta},\rho,\lambda}
    n^{-\zeta_\lambda} \frac{1}{(\sigma_*^2)^{3/2}}.
    \end{equation*}
    
    \noindent{\nf{(ii)}} \underline{Long-range dependence:} Assume that $|\rho(k)| \le \wt c \,|k|^{-\mu}L(|k|)$ for all $k \in \Z \setminus\{0\}$ for $\mu \in (2/3,1)$ and $L \in \mathrm{SV}_\infty$. 
    If $\beta=1/2$, assume additionally under $d_\HR$ that $\kappa< 
    \log((1-3\alpha)/(4e^{1/(2e)}\lambda))
    /2-\log(24)/2-5/(4e)$, and under $d_\CD$, assume that $\lambda<(1-3\alpha)\cdot 12/65$. Then, there exists a $\zeta_\lambda \in (0,(1-3\alpha)/2)$ and a constant $C_{\bm{\theta},\alpha,R,\lambda}\in (0,\infty)$ independent of $n$, such that, for all $n \in \N$,
    \begin{equation*}
    d_\HR(\bm{S}_n,\bm{Z}_n)\le 
    C_{\bm{\theta},\alpha,R,\lambda} 
    n^{-\zeta_\lambda}\frac{\log_+(\overline{\overline{\sigma}}\, \wt \sigma^2_*/\underline{\underline{\sigma}})}{\wt \sigma^2_*}, \quad \text{ and }\quad d_\CD(\bm{S}_n,\bm{Z}_n)\le 
    C_{\bm{\theta},\alpha,R,\lambda}
    n^{-\zeta_\lambda} ((\sigma_*^2)^{-3/2}+1).
    \end{equation*}
\end{corollary}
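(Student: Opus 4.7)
The plan is to apply the fixed-covariance short-range/long-range corollary, namely Corollary~\ref{cor:LRD_SRD_fixed cov:mat}, and then substitute the polynomial dimension growth $d=d(n)\le n^\lambda$ into the resulting bounds, tracking when the exponent of $n$ stays strictly negative. The correlation/eigenvalue factors in the bound do not depend on $n$ or $d$ through explicit polynomial powers, so it suffices to control the product of $\psi_{\beta,\kappa}(d)$ (or $d^{65/24}$ for $d_{\CD}$) with the decay term in $n$.

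For part (i) under $d_\HR$, Corollary~\ref{cor:LRD_SRD_fixed cov:mat}(i) gives a rate of order $\psi_{\beta,\kappa}(n^{\lambda})\log_+(n)/\sqrt{n}$ up to eigenvalue factors. In the non-critical regime $\beta\in(1/2,1]$, the sub-polynomiality of $\psi_{\beta,\kappa}$ from~\eqref{eq:bounds_psi} allows us to bound $\psi_{\beta,\kappa}(n^\lambda)\le n^{\epsilon}$ for any $\epsilon>0$ and all large $n$, yielding any $\zeta_\lambda\in(0,1/2)$ after absorbing $\log_+(n)$. In the critical case $\beta=1/2$, we have $\psi_{\beta,\kappa}(d)=\log(d)\,d^{r}$ with
\begin{equation*}
r = 2e^{1/(2e)}\cdot(1/2)\cdot e^{2(\kappa+\log(24)/2+5/(4e))}\cdot 2 = 48\,e^{2\kappa+3/e},
\end{equation*}
so $\psi_{\beta,\kappa}(n^\lambda)\lesssim \log(n)\,n^{\lambda r}$. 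The inequality $\lambda r<1/2$ is equivalent to $\kappa < -\log(96\lambda)/2-3/(2e)$, which rearranges exactly to the stated threshold $\kappa<-\log(4e^{1/(2e)}\lambda)/2-\log(24)/2-5/(4e)$; this lets us pick $\zeta_\lambda\in(0,1/2-\lambda r)$. For $d_\CD$ the analogous computation is simpler: $d^{65/24}/\sqrt{n}\le n^{65\lambda/24-1/2}$, which is summable-in-$n$ precisely when $\lambda<12/65$.

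For part (ii) under the LRD assumption $|\rho(k)|\le \widetilde c|k|^{-\mu}L(|k|)$ with $\mu\in(2/3,1)$, setting $\alpha\coloneqq 1-\mu\in(0,1/3)$ rewrites the exponent $(3\mu-2)/2$ in Corollary~\ref{cor:LRD_SRD_fixed cov:mat}(ii) as $(1-3\alpha)/2\in(0,1/2)$. A Potter-bound argument absorbs the factor $L(n)^{3/2}$ into $n^{\epsilon}$ for arbitrarily small $\epsilon>0$. The same dichotomy as in part (i) then applies: for $\beta\in(1/2,1]$, sub-polynomiality of $\psi_{\beta,\kappa}$ together with $L(n)^{3/2}\le n^{\epsilon}$ gives any $\zeta_\lambda\in(0,(1-3\alpha)/2)$; for $\beta=1/2$, the condition $\lambda r <(1-3\alpha)/2$ becomes $\kappa<\log((1-3\alpha)/(96\lambda))/2-3/(2e)$, which again matches the stated threshold after unfolding $\log(4e^{1/(2e)}\lambda)$. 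The $d_\CD$ case requires $65\lambda/24<(1-3\alpha)/2$, i.e.\ $\lambda<(1-3\alpha)\cdot 12/65$.

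The only nontrivial step is the exact algebraic matching of the thresholds on $\kappa$ in the critical regime $\beta=1/2$, which follows from a direct unfolding of the explicit constant $r=r_{\beta,\kappa}$ from~\eqref{defn:r_constant-2}. All other ingredients, namely the Potter bound for $L$, the sub-polynomial growth of $\psi_{\beta,\kappa}$, and the elementary inequality $\log_+(n)\le C_\epsilon n^\epsilon$, are standard and contribute only to adjusting the final constants $C_{\bm{\theta},\rho,\lambda}$ and $C_{\bm{\theta},\alpha,R,\lambda}$. Consequently, the main obstacle is bookkeeping rather than analysis, and the stated dichotomy between the sub-polynomial regime $\beta>1/2$ (where any $\lambda>0$ works after imposing the $d_\CD$-specific upper bound) and the polynomial regime $\beta=1/2$ (where $\lambda$ and $\kappa$ are coupled) emerges directly from the growth of $\psi_{\beta,\kappa}$.
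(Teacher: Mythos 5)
Your proposal is correct and follows essentially the same route as the paper: apply Corollary~\ref{cor:LRD_SRD_fixed cov:mat}, substitute $d\le n^\lambda$, use the sub-polynomiality~\eqref{eq:bounds_psi} of $\psi_{\beta,\kappa}$ for $\beta>1/2$ and the explicit form $\psi_{\beta,\kappa}(d)=d^r\log(d)$ for $\beta=1/2$, and unfold $r=48e^{2\kappa+3/e}$ from~\eqref{defn:r_constant-2} to match the stated thresholds on $\kappa$ (your algebra checks out, including the identification $\alpha=1-\mu$ in part (ii)). The only cosmetic slip is the phrase ``summable-in-$n$'' where you mean that the exponent $65\lambda/24-1/2$ is strictly negative.
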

Note by Corollary~\ref{cor:main_theorem_n_dep_d}, that under $d_\HR$, the dimension can grow as any polynomial as a function of $n$, and a Gaussian approximation is still valid if $\sigma_*^2$ is uniformly bounded away from $0$. As for $d_\CD$, we have a strict upper bound on $\lambda$, i.e.\ an upper bound on the polynomial growth of $d(n)$, such that a Gaussian approximation is still valid. 

\subsection{$C^\infty$ version of~(\ref{eq:main_assump_a_i,q})}\label{sec:C^infty_version_a_i,q_ass}
Recall assumption~\eqref{eq:main_assump_a_i,q} from Theorem~\ref{thm:main_theorem}, which states that there exists a $\bm{\theta}=(\beta,\kappa,c) \in [1/2,1]\times \R \times (0,\infty)$, such that $|a_{i,q}| \le c e^{\kappa q} (q!)^{-\beta}$ for all $i=1\ld d$ and $q \ge 2$. Such an assumption is easily verified whenever the Hermite expansion of a function is available. If not directly available in the literature, the Hermite coefficients can often be calculated from the function $\varphi_i$ under some smoothness assumptions. In the ensuing lemma, we give smoothness conditions such that~\eqref{eq:main_assump_a_i,q} is satisfied. We use the notation $\varphi_i^{(q)}(x)=\tfrac{\D^q}{\D y^q}\varphi_i(y)\big|_{y=x}$ for all $q \ge 1$. It is well known in the literature that the smoothness of the function directly corresponds to the size of the Hermite coefficients, see e.g.~\cite{Hermite_Davis} \&~\cite[Sec.~1.4]{MR2962301}.

\begin{lemma}\label{lem:c_infty_main_result}
    Assume the following two properties for each coordinate of $\Phi(x)$: \nf{I)} $\varphi_i\in C^\infty$ and $\varphi_i^{(q)} \in L^2(\gamma,\R)$ for all $q \ge 2$; \nf{II)} there exists $c \in (0,\infty)$, such that for all $q \ge 2$ and $i=1 \ld d$,  
    $$\left| \int_{-\infty}^\infty \varphi_i^{(q)}(x)
    \gamma(\D x)\right|\le c\begin{dcases}
         e^{\kappa q}\sqrt{q!}, & \text{ for }\beta=1/2 \text{ and }\kappa \le 0, \text{ or }\\
         e^{\kappa q} (q!)^{1-\beta}, &\text{ for }\beta \in (1/2,1] \text{ and }\kappa \in \R.
    \end{dcases}$$
Then,~\eqref{eq:main_assump_a_i,q} in Theorem~\ref{thm:main_theorem} is satisfied for all $q \ge m_i$ and $i=1\ld d$, for the choice of $\bm{\theta}$ from II).
\end{lemma}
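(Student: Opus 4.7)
The plan is to reduce assumption II directly to a bound on the Hermite coefficients via the classical ``derivative formula'' for the Hermite expansion: under sufficient smoothness, one has the identity
\begin{equation*}
    q!\, a_{i,q} = \int_{-\infty}^\infty \varphi_i(x) H_q(x)\, \gamma(\D x) = \int_{-\infty}^\infty \varphi_i^{(q)}(x)\, \gamma(\D x).
\end{equation*}
Once this identity is in hand, dividing by $q!$ and applying assumption II give $|a_{i,q}|\le c\,e^{\kappa q}(q!)^{-1/2}$ when $\beta=1/2$ and $|a_{i,q}|\le c\,e^{\kappa q}(q!)^{-\beta}$ when $\beta\in(1/2,1]$, which is exactly the bound~\eqref{eq:main_assump_a_i,q} requested in Theorem~\ref{thm:main_theorem}.

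To establish the derivative formula, I would start from Rodrigues' formula $H_q(x)e^{-x^2/2}=(-1)^q\tfrac{\D^q}{\D x^q}e^{-x^2/2}$, which is a direct consequence of the definition of $H_q$ given in Section~\ref{sec:preliminaries}. Then I would perform integration by parts $q$ times in
\begin{equation*}
    \int_{-\infty}^\infty \varphi_i(x) H_q(x)\,\gamma(\D x) = \frac{(-1)^q}{\sqrt{2\pi}} \int_{-\infty}^\infty \varphi_i(x) \frac{\D^q}{\D x^q} e^{-x^2/2} \, \D x,
\end{equation*}
producing the claimed identity provided the boundary terms $\varphi_i^{(k)}(x)\cdot \tfrac{\D^{q-1-k}}{\D x^{q-1-k}} e^{-x^2/2}$ vanish at $\pm\infty$ for $k=0,\ldots,q-1$.

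The main obstacle is therefore justifying that these boundary terms vanish. Each derivative $\tfrac{\D^{j}}{\D x^{j}}e^{-x^2/2}$ equals a polynomial in $x$ times $e^{-x^2/2}$, so it suffices to argue that $\varphi_i^{(k)}(x) e^{-x^2/2}$ multiplied by any polynomial tends to $0$ as $|x|\to\infty$. Under assumption I, each $\varphi_i^{(k)}$ lies in $L^2(\gamma,\R)$, which prevents excessive growth; combined with continuity of $\varphi_i^{(k+1)}$ (also in $L^2(\gamma)$), a standard Gaussian integrability argument — for instance, writing $\varphi_i^{(k)}(x)^2 e^{-x^2/2}$ as an integral of its derivative and using Cauchy--Schwarz — shows that $\varphi_i^{(k)}(x) e^{-x^2/4}\to 0$ along almost every sequence, and upgrading this to a pointwise decay suffices to kill the boundary terms when multiplied by any polynomial times $e^{-x^2/4}$. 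Alternatively, one may avoid this verification altogether by a density argument: the identity holds trivially for polynomials, both sides are continuous in $\varphi_i$ for the natural topology induced by $L^2(\gamma)$-convergence of $\varphi_i$ and its first $q$ derivatives, and one extends by approximation.

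With the derivative formula established, the conclusion is immediate: for $\beta\in(1/2,1]$,
\begin{equation*}
    |a_{i,q}| = \frac{1}{q!}\left|\int \varphi_i^{(q)}(x)\gamma(\D x)\right| \le \frac{c\, e^{\kappa q}(q!)^{1-\beta}}{q!} = \frac{c\, e^{\kappa q}}{(q!)^{\beta}},
\end{equation*}
and analogously for $\beta=1/2$ with $(q!)^{1-\beta}$ replaced by $\sqrt{q!}$, matching~\eqref{eq:main_assump_a_i,q} with the same triple $\bm\theta=(\beta,\kappa,c)$ specified in II.
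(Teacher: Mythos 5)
Your proposal is correct and follows essentially the same route as the paper: both rest on the identity $a_{i,q}=\frac{1}{q!}\int_\R \varphi_i^{(q)}(x)\,\gamma(\D x)$, after which dividing by $q!$ and inserting assumption II gives~\eqref{eq:main_assump_a_i,q} immediately. The only difference is that the paper obtains this identity by citing \cite[Prop.~1.4.2(v)]{MR2962301} rather than re-deriving it via Rodrigues' formula and integration by parts, so the boundary-term verification you sketch (which is the only delicate point in your write-up) is subsumed by that reference.
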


The smoothness of the coordinates of $\Phi$, in terms of the size of the derivatives of $\varphi_i$, significantly influences how good dimensional rates we can expect under $d_\HR$. Indeed, under $d_\HR$, as evident from Lemma~\ref{lem:c_infty_main_result} and Theorem~\ref{thm:main_theorem}, the case $\beta=1$ has a much better dimensional dependence (sub-polynomial) than in the case where $\beta=1/2$ (polynomial). In Section~\ref{subsec:examples_beta=1/2}, we will see specific examples of functions in the case $\beta=1/2$.

\begin{remark}\label{rem:dim_dep_constant}
    Assume that~\eqref{eq:main_assump_a_i,q} holds for a $\bm{\theta}$, which can be verified by the assumptions in Lemma~\ref{lem:c_infty_main_result} or directly if possible. It is of interest how the bound of $d_{\HR}(\bm{S}_n,\bm{Z}_n)$ from~\eqref{eq:main_result_inequality} in Theorem~\ref{thm:main_theorem} depends on $c$, i.e.\ how the constant $C_{\bm{\theta}}$ depends on $c$. If one follows the dependence of $c$ through the proofs of Theorem~\ref{thm:main_mult_clt_techncial_thm} and subsequently the proof of Theorem~\ref{thm:main_theorem}, it becomes clear that the dependence is a multiplication with $c^2\log_+(c)$. This means that there exists a universal finite constant $\wt C_{\beta,\kappa}>0$, independent of $c$, $n$ and $d$, such that $C_{\bm{\theta}} \le c^2\log_+(c)\wt C_{\beta,\kappa}$. Similarly, for~\eqref{eq:main_result_inequality_d_C} under $d_\CD$, there exists a universal finite constant $\wt C_{\beta,\kappa}>0$, independent of $c$, $n$ and $d$, such that $C_{\bm{\theta}} \le c^2\wt C_{\beta,\kappa}$. 
\end{remark}

\subsection{Examples for $\beta=1/2$}\label{subsec:examples_beta=1/2} 
In Lemma~\ref{lem:example_beta=1/2} below, we explore examples of functions $\varphi_i$ for which assumption~\eqref{eq:main_assump_a_i,q} is satisfied for $\beta=1/2$ and $\kappa<\Upsilon$. These functions are closely related to the normal distribution since they are modifications of the density and the cumulative distribution function of a $\mathcal{N}(0,\sigma_i^2)$-distribution, for a large enough $\sigma_i^2>0$ for $i= 1 \ld d$. 

\begin{lemma}\label{lem:example_beta=1/2} Let $M=\Upsilon$ or $M=-\log(3)/2$, and let $\sigma_i^2 \in (e^{-2M}-1,\infty)$ for all $i=1\ld  d$, and define $\varphi_i$ as one of the following functions:
\begin{align*}
    \textup{(i)}&\quad  \varphi_i(x)=\frac{1}{\sqrt{2\pi \sigma_i^2}}e^{-\frac{x^2}{2\sigma_i^2}}-\frac{1}{\sqrt{2\pi(\sigma_i^2+1)}}, \quad \text{ for all }x\in \R \text{ and }i=1\ld d,\\
    \textup{(ii)}&\quad  \varphi_i(x)=\int_{-\infty}^x \frac{1}{\sqrt{2\pi \sigma_i^2}}e^{-\frac{y^2}{2\sigma_i^2}}\D y-\frac{1}{2}-\frac{x}{\sqrt{2\pi(\sigma^2+1)}}, \quad \text{ for all }x\in \R, \text{ and }i=1\ld d.
\end{align*}
     Both choices of $\varphi_i$ have Hermite rank at least $2$ for all $i=1\ld d$. For $\kappa = -\log(\min_{1 \le i \le d}\sigma_i^2+1)/2<M$ there exists a $c \in (0,\infty)$ such that~\eqref{eq:main_assump_a_i,q} holds for $\beta=1/2$, i.e.\ $|a_{i,q}|\le ce^{\kappa q}/\sqrt{q!}$ for all $q \ge 2$ and $i=1\ld d$.
\end{lemma}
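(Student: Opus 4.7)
The plan is to compute the Hermite coefficients of $\varphi_i$ explicitly in case (i) via the Hermite generating function $\sum_{q\ge 0}(t^q/q!)H_q(x)=e^{tx-t^2/2}$, and then reduce case (ii) to case (i) through the observation that $(\varphi_i^{(\mathrm{ii})})'=\varphi_i^{(\mathrm{i})}$ (with appropriate constants chosen so $a_{i,0}$ vanishes).

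For case (i), I would integrate the generating function against $\varphi_i$. Using $a_0=0$ (by the choice of the constant $1/\sqrt{2\pi(\sigma_i^2+1)}$) and $\int \varphi_i(x)e^{tx-t^2/2}\gamma(\D x)=\sum_{q}a_{i,q}t^q$, a direct Gaussian integration with $\alpha=(\sigma_i^2+1)/\sigma_i^2$ gives
\begin{equation*}
\sum_{q\ge 1}a_{i,q}t^q \;=\;\frac{1}{\sqrt{2\pi(\sigma_i^2+1)}}\Bigl(e^{-t^2/(2(\sigma_i^2+1))}-1\Bigr)\;=\;\sum_{k\ge 1}\frac{(-1)^k}{2^k k!(\sigma_i^2+1)^k\sqrt{2\pi(\sigma_i^2+1)}}\,t^{2k}.
\end{equation*}
Thus $a_{i,q}=0$ whenever $q$ is odd, confirming Hermite rank at least $2$, and for even $q=2k\ge 2$,
\begin{equation*}
|a_{i,2k}|\;=\;\frac{1}{2^k k!(\sigma_i^2+1)^k\sqrt{2\pi(\sigma_i^2+1)}}.
\end{equation*}
To get~\eqref{eq:main_assump_a_i,q} with $\beta=1/2$, the plan is to use the elementary estimate $\binom{2k}{k}\le 4^k$, equivalently $\sqrt{(2k)!}/(2^k k!)\le 1$, so that $|a_{i,2k}|\le (\sigma_i^2+1)^{-k}/\sqrt{(2k)!\cdot 2\pi(\sigma_i^2+1)}$. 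Setting $\kappa=-\log(\min_j\sigma_j^2+1)/2$ yields $e^{2\kappa k}\ge (\sigma_i^2+1)^{-k}$ uniformly in $i$, and so~\eqref{eq:main_assump_a_i,q} holds with $c=1/\sqrt{2\pi(\min_j\sigma_j^2+1)}$. The condition $\kappa<M$ follows because the hypothesis $\sigma_i^2>e^{-2M}-1$ forces $\log(\sigma_j^2+1)>-2M$ for every $j$.

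For case (ii), I observe that $\varphi_i^{(\mathrm{ii})}$ is smooth, tends to $0$ at $\pm\infty$ faster than any polynomial grows (the subtracted linear term cancels the linear growth of the Gaussian CDF via the normalisation $1/\sqrt{2\pi(\sigma_i^2+1)}$), and satisfies $(\varphi_i^{(\mathrm{ii})})'=\varphi_i^{(\mathrm{i})}$. Using $H_q'=qH_{q-1}$, term-by-term differentiation of the Hermite expansion gives the relation $(q+1)a_{i,q+1}^{(\mathrm{ii})}=a_{i,q}^{(\mathrm{i})}$, and I will check that the boundary terms in the corresponding integration-by-parts vanish so that this indeed holds. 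The constant $-1/2$ in the definition of $\varphi_i^{(\mathrm{ii})}$ is chosen exactly so that $a_{i,0}^{(\mathrm{ii})}=\int \varphi_i^{(\mathrm{ii})}\gamma(\D x)=0$, using symmetry of the Gaussian CDF and $\int x\gamma(\D x)=0$. Combined with $a_{i,0}^{(\mathrm{i})}=0$ (seen above), the relation gives $a_{i,1}^{(\mathrm{ii})}=0$, proving Hermite rank at least $2$ in case (ii), and also $a_{i,q}^{(\mathrm{ii})}=0$ for $q$ even.

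Finally, for odd $q=2k+1\ge 3$, the derivative relation and the estimate from case (i) give
\begin{equation*}
|a_{i,2k+1}^{(\mathrm{ii})}|\;=\;\frac{|a_{i,2k}^{(\mathrm{i})}|}{2k+1}\;\le\; \frac{c\,e^{\kappa\cdot 2k}}{(2k+1)\sqrt{(2k)!}}\;\le\; \frac{c\,e^{-\kappa}}{\sqrt{2k+1}}\cdot\frac{e^{\kappa(2k+1)}}{\sqrt{(2k+1)!}},
\end{equation*}
which yields~\eqref{eq:main_assump_a_i,q} with a modified constant $c'=c\,e^{-\kappa}/\sqrt{3}$. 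The only slightly subtle point, and the place to be careful, is verifying that the boundary contributions in the integration-by-parts identity $(q+1)a_{i,q+1}^{(\mathrm{ii})}=a_{i,q}^{(\mathrm{i})}$ truly vanish for $q\ge 1$: this reduces to controlling $\varphi_i^{(\mathrm{ii})}(x)\gamma(x)H_{q}(x)$ at $\pm\infty$, which is immediate since $\varphi_i^{(\mathrm{ii})}$ grows at most polynomially while $\gamma$ decays like $e^{-x^2/2}$, so no other obstacles arise.
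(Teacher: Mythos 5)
Your proposal is correct. It reaches the same explicit coefficients as the paper's proof, but by a somewhat different route: the paper simply quotes the known Hermite expansions of the Gaussian density and CDF from Davis's notes \cite{Hermite_Davis} and then converts $\big((q/2)!\,2^{q/2}\big)^{-1}$ into $e^{\kappa q}/\sqrt{q!}$ via Stirling's inequality~\eqref{eq:Stirling's_formula}, whereas you derive case (i) from scratch through the generating function $\sum_q (t^q/q!)H_q(x)=e^{tx-t^2/2}$ and obtain case (ii) from case (i) via the antiderivative relation $(q+1)a^{(\mathrm{ii})}_{i,q+1}=a^{(\mathrm{i})}_{i,q}$ (equivalently, integration by parts against $\frac{\D}{\D x}\big(H_{q-1}(x)e^{-x^2/2}\big)=-H_q(x)e^{-x^2/2}$), with the boundary terms vanishing for the reason you give. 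Your replacement of Stirling by the central binomial bound $\binom{2k}{k}\le 4^k$, i.e.\ $\sqrt{(2k)!}\le 2^k k!$, is cleaner here and yields a fully explicit constant $c$; the paper's Stirling route produces the same exponential factor $e^{\kappa q}$ with $\kappa=-\log(\min_i\sigma_i^2+1)/2$ but a less transparent constant. Your reduction of (ii) to (i) also makes the even/odd vanishing and the Hermite rank statement immediate, where the paper instead cites a second expansion. The only points worth writing out in full in a final version are the interchange of sum and integral in the generating-function identity (justified since $e^{tx-t^2/2}$ converges in $L^2(\gamma)$ and $\varphi_i\in L^2(\gamma)$) and the boundary-term check you already flagged; neither is an obstacle.
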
 Note that~\eqref{eq:main_assump_a_i,q} cannot hold for $\beta>1/2$, for the functions in Lemma~\ref{lem:example_beta=1/2}. Moreover, note that we under $d_\HR$ choose $M=\Upsilon$ and under $d_\CD$ choose $M=-\log(3)/2$. Moreover, we note that from the proof of Lemma~\ref{lem:example_beta=1/2}, it is clear that~\eqref{eq:main_assump_a_i,q} cannot hold for any $\beta>1/2$. An example of a function where $\beta =1/2$ but $\kappa>\Upsilon$, is the modified indicator function $f(x)=\1_{(0,\infty)}(x)+(1/2)\1_{\{0\}}(x)-1/2-x/\sqrt{2\pi}$ for $x \in \R$, which by~\cite[Thm~2.11]{Hermite_Davis} has Hermite coefficients given by $a_{i,q}=(-1)^{(q-1)/2}(q((q-1)/2)!2^{(q-1)/2}\sqrt{2\pi})^{-1}$ for odd $q$ and $a_{i,q}=0$ for even $q$.

\subsection{Finite Hermite Expansions}\label{subsec:finite_hermite_expansion}
 In this section, functions in $L^2(\gamma,\R)$ which have finite Hermite expansions are considered, i.e.\ functions $\varphi_i$ for which $N_i=\sup\{q \ge 0:a_{i,q}\ne 0\}<\infty $ such that $\varphi_i(x)=\sum_{q=m_i}^{N_i} a_{i,q} H_q(x)$. With a finite Hermite expansion, sharper bounds than in the proof of Theorem~\ref{thm:main_theorem} are available, and hence, a better bound is possible in this case. These bounds exploit the fact that the Hermite expansions are finite, and construct quantitative bounds, which are explicit in both $n$, $d$ and $N$. Another advantage of having a finite Hermite expansion is that we, in the case $\beta=1/2$, can consider all $\kappa \in (-\infty,0]$, where Theorem~\ref{thm:main_theorem} was restricted to the case $\kappa<\Upsilon$ for $d_\HR$ or $\kappa<-\log(3)/2$ for $d_\CD$.

\begin{corollary}\label{cor:finite_hermite_series}
For $n,d\in \N$ let $\bm{S}_n$ be a $d$-dimensional random vector given by~\eqref{eq:defn_S_n}, and suppose that $\varphi_i\in L^2(\gamma,\R)$ has Hermite rank at least 2 and Hermite coefficients $a_{i,q}$ given as in~\eqref{eq:defn_Hermite_expansion} for all $q\ge 2$ and $i=1,\dots, d$. Assume furthermore, that $N=\max_{i \in \{1\ld d\}}\sup\{q \ge 0:a_{i,q}\ne 0\}<\infty$, and that there exists $\bm{\theta}=(\beta,\kappa,c)\in [1/2,1]\times \R \times (0,\infty)$ such that~\eqref{eq:main_assump_a_i,q} is satisfied. If $\beta=1/2$, assume additionally that $\kappa \le 0$. Let $\bm{Z}_n \sim \mathcal{N}_d(\bm{0},\bm{\Sigma}_n)$ and assume $\bm{\Sigma}_n\coloneqq  \cov(\bm{S}_n)$ is invertible, and let $\sigma^2_*=\sigma^2_*(\cor(\bm{S}_n))$.  
    Then, there exists a constant $C_{\bm{\theta}}>0$, which only depends on $\bm{\theta}$, such that 
    \begin{align}\label{eq:d_R_bound_finite expansion}
    \begin{aligned}
        d_\HR(\bm{S}_n,\bm{Z}_n) &\le C_{\bm{\theta}}\log_+(d)\Delta(d,n,N)\log_+(\Delta(d,n,N))\frac{\log_+(\sigma^2_*)}{\sigma^2_*}, \text{ for all }n,d \in \N, \text{ where} \\
    \Delta(d,n,N)&=
        \frac{\|\rho_n\|_{\ell^1(\Z)}^{3/2}}{\sqrt{n}} 
    \left(\frac{2e \log(2d^2-1+e^{N-2})}{N-1}\right)^{N-1} \\
    &\qquad \big(\1_{\{\beta=1/2,\kappa \in [-\log(3)/2,0]\}}N^{7/2}e^{N(2\kappa+\log(3))}
    +\1_{\{\beta>1/2\}\cup\{\beta=1/2,\kappa<-\log(3)/2\}}\big),\text{ and }
    \end{aligned}
    \end{align}
    \begin{align}\label{eq:d_c_bound_finite expansion}
    \begin{aligned}
        d_\CD(\bm{S}_n,\bm{Z}_n)& \le C_{\bm{\theta}} d^{65/24} \Psi(N,\beta) \frac{ \|\rho_n\|_{\ell^1(\Z)}^{3/2}}{\sqrt{n}} \frac{1}{(\sigma_*^2)^{3/2}}, \quad \text{ for all }n,d \in \N, \text{ where }\\
        \Psi(N,\beta) &\coloneqq \1_{\{\beta>1/2\}\cup\{\beta=1/2,\kappa<-\log(3)/2\}}+\1_{\{\beta=1/2\}}\times \begin{dcases}
            \left( \sum_{q=2}^N q^{1/2}\right)\left( \sum_{\ell=2}^N \ell^{-1/2}\right) \1_{\{\kappa=-\log(3)/2\}}, \\
        N^{5/2} e^{(2\kappa+\log(3))N} \1_{\{\kappa \in (-\log(3)/2,0]\}}.
        \end{dcases}
    \end{aligned}
    \end{align} 
\end{corollary}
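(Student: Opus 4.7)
The plan is to follow the proof of Theorem~\ref{thm:main_theorem} while exploiting the finite Hermite expansion to sharpen the dimensional factor. The function $\psi_{\beta,\kappa}(d)$ in Theorem~\ref{thm:main_theorem} arises from an optimization over a truncation level in an infinite Hermite series; when each $\varphi_i$ is a polynomial of Hermite degree at most $N$, no such truncation is necessary and one can bound the relevant sums directly, recovering the factor $\log_+(d)\bigl(2e\log(2d^2-1+e^{N-2})/(N-1)\bigr)^{N-1}$ in place of $\psi_{\beta,\kappa}(d)$.

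Concretely, I would first invoke the general technical bound (Theorem~\ref{thm:main_mult_clt_techncial_thm}) for $d_\HR$ and its $d_\CD$ analogue (Theorem~\ref{thm:Ext_to_convex_dist}), which reduce the two distances to estimates on the moments of the Hermite-chaos components of $\bm{S}_n$, together with dimensional sums of the form $\sum_{i=1}^d\sum_{q\ge 2} q^{k}(q!)|a_{i,q}|^2$. Using $a_{i,q}=0$ for $q>N$ and~\eqref{eq:main_assump_a_i,q}, each such sum is dominated by
\[
c^2 d\sum_{q=2}^{N}q^{k}e^{2\kappa q}(q!)^{1-2\beta},
\]
which is uniformly bounded in $N$ whenever $\beta>1/2$, or $\beta=1/2$ and $\kappa<-\log(3)/2$, and grows at the rate dictated by the exponents otherwise. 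This avoids the convergence constraints $\kappa<\Upsilon$ and $\kappa<-\log(3)/2$ from Theorem~\ref{thm:main_theorem} and is why the present statement can allow any $\kappa\le 0$ when $\beta=1/2$.

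Next, to translate these sums into the claimed dimensional factor, I would apply a Stirling-type estimate to the coefficients produced by the maximum-over-$d$-coordinates functional used in the proof of Theorem~\ref{thm:main_theorem} (mirroring the argument behind Lemma~\ref{lem:Theta_d_growth}): the sum $\sum_{q=2}^{N}(e\log_+ d)^{q-1}/(q-1)!$ is maximized term-by-term, and by Stirling's formula the maximum over $q\le N$ is bounded by $\bigl(2e\log(2d^2-1+e^{N-2})/(N-1)\bigr)^{N-1}$. Feeding this bound into the smoothing and anti-concentration machinery of [MR4312842] delivers the $d_\HR$ statement, while feeding it into [MR4488569, Thm~1.2] yields the $d_\CD$ statement with the intrinsic $d^{65/24}$ factor.

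The main obstacle I expect is the regime $\beta=1/2$, $\kappa\in(-\log(3)/2,0]$, in which the per-term chaos bounds from [MR4312842] and [MR4488569] contribute a factor of $3^q$ at Hermite level $q$ that, together with $e^{2\kappa q}$, grows like $e^{q(2\kappa+\log 3)}$. Since the corresponding geometric series diverges, finiteness of $N$ is essential, and one must carefully track how fast $\sum_{q=2}^N q^{k}e^{(2\kappa+\log 3)q}$ grows in $N$ — producing the $N^{7/2}e^{N(2\kappa+\log 3)}$ factor inside $\Delta(d,n,N)$ and the $N^{5/2}e^{N(2\kappa+\log 3)}$ factor inside $\Psi(N,\beta)$. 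The bookkeeping required to separate these polynomial-in-$N$ weights between the $d_\HR$ and $d_\CD$ bounds, and to verify that the constants depend only on $\bm{\theta}$, is the most delicate part of the argument; the remaining factors $\|\rho_n\|_{\ell^1(\Z)}^{3/2}/\sqrt{n}$, $\log_+(\sigma_*^2)/\sigma_*^2$ and $1/(\sigma_*^2)^{3/2}$ are inherited unchanged from the respective ambient theorems.
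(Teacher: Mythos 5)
Your overall plan is right for the $d_\CD$ bound: there the paper does exactly what you describe, namely rerun the proof of Theorem~\ref{thm:Ext_to_convex_dist} with the double sum over chaos orders truncated at $N$, and then use Stirling's inequality to extract the $N$-dependence, which is precisely where the three regimes of $\Psi(N,\beta)$ (and the divergent geometric series $e^{q(2\kappa+\log 3)}$ for $\kappa>-\log(3)/2$) come from. Your diagnosis of why finiteness of $N$ rescues the case $\kappa\in(-\log(3)/2,0]$ is also correct.

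For the $d_\HR$ bound, however, your proposal misses the one genuinely new ingredient, and the mechanism you substitute for it would not produce the claimed factor. In Theorem~\ref{thm:main_mult_clt_techncial_thm} the maximal inequality of \cite[Props~A.1 \&~A.2]{MR3911126} is applied \emph{separately to each pair} $(\ell,q)$, giving a factor $M_{\ell+q-2}\log^{(\ell+q-2)/2}(2d^2-1+e^{(\ell+q)/2})$ per term, and the whole of Lemma~\ref{lem:Theta_d_growth} is devoted to summing these; merely truncating that sum at $N$ does not yield $\bigl(2e\log(2d^2-1+e^{N-2})/(N-1)\bigr)^{N-1}$. The paper instead observes that when the expansion is finite, the \emph{entire} error $\Delta_n(i,j)=\sum_{\ell,q=2}^N\delta(i,j,q,\ell,n)$ lies in $\mathcal{P}_{2N-2}$, hence is a single sub-$(2N-2)$th chaos variable relative to scale $M_{2N-2}\|\Delta_n(i,j)\|_2$ with $M_{2N-2}=(2e/(N-1))^{N-1}$, and applies the maximal inequality over the $d^2$ indices \emph{once}, which is exactly where the factor $\bigl(2e\log(2d^2-1+e^{N-2})/(N-1)\bigr)^{N-1}$ comes from. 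Your alternative — ``the sum $\sum_{q=2}^N(e\log_+d)^{q-1}/(q-1)!$ is maximized term-by-term and bounded by Stirling'' — is not a derivation of this factor and does not correspond to any step in either proof. Relatedly, the intermediate quantity you write, $c^2 d\sum_{q=2}^N q^k e^{2\kappa q}(q!)^{1-2\beta}$, is not the object that actually appears: the $d_\HR$ argument controls $\max_{1\le i,j\le d}|a_{i,\ell}a_{j,q}|$ multiplied by the fourth-moment constants $\wt C_{q,\ell}=\E[H_{\ell-1}(G_1)^4]^{1/4}\E[H_{q-1}(G_1)^4]^{1/4}$, and all $d$-dependence enters through the single logarithmic factor from the maximal inequality, not through a linear-in-$d$ sum over coordinates (the extra $d$ in the $d_\CD$ bound comes from bounding the Hilbert--Schmidt norm by $d^2$ times a maximum). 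Without the ``one-shot'' sub-chaos step your argument, as written, does not reach~\eqref{eq:d_R_bound_finite expansion}.
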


\section{Proof of Main Result \& Malliavin Calculus}\label{sec:tech_res_malliavin}
In this section, we introduce the general main result, i.e.\ Theorem~\ref{thm:main_mult_clt_techncial_thm}, which is a generalisation of Theorem~\ref{thm:main_theorem} and the basics of Malliavin calculus and Stein's kernels. 

\subsection{Malliavin Calculus and Stein's Kernels}\label{sec:bascis_malliavin_proofs}

This section introduces the basics of Malliavin calculus and Stein's kernels. For a thorough background on Malliavin calculus, we refer to the monographs~\cite{MR2200233,MR2962301}. Let $\mH$ be a real separable Hilbert space with inner product $\langle \cdot,\cdot\rangle_\mH$ and norm $\|\cdot\|_\mH=\langle \cdot,\cdot\rangle_\mH^{1/2}$. We say that $X=\{X(h):h \in \mH\}$ is an \emph{isormormal Gaussian process} over $\mH$, where $X$ is defined on some probability space $(\Omega,\mathcal{F},\mathds{P})$, if $X$ is a centred Gaussian family indexed by $\mH$ with $\E[X(h)X(g)]=\langle h,g\rangle_{\mH}$. Moreover, let $\mathcal{F}$ be the $\sigma$-algebra generated by $X$, i.e.\ $\mathcal{F}=\sigma\{X\}$, and write $L^2(\Omega,\mathcal{F},\mathds{P})=L^2(\Omega)$ where $F \in L^2(\Omega)$ if $\E[|F|^2]<\infty$. Let $\mathcal{H}_q $ be the $q$th Wiener chaos of $X$, defined to be the closure of the span of $\{H_p(W(h)) : h\in \mH, \|h\|_\mH = 1\}$, i.e.\ $\mathcal{H}_q \coloneqq \overline{\text{span}}\{H_q(X(h)) : h\in \mH, \|h\|_\mH = 1\}$ for $q \in \N_0$, and define $\mathcal{P}_{q} \coloneqq \oplus_{i=0}^q \mathcal{H}_i$ for $q \in \N_0$. Recall the \emph{Wiener--It\^o chaos expansion} (see~\cite[Cor.~2.7.8]{MR2962301}), which, for any $F \in L^2(\Omega)$, states that $F=\E[F]+\sum_{q=1}^\infty I_q(f_q)$ where $I_0(f_0)=\E[F]$, $I_q(f_q)$ is the $q$th multiple Wiener--It\^o integral and the kernels $f_q$ are in the symmetric $q$th tensor product in $ \mH^{\odot q}$. For such a $F$ we say that $F \in \mD$ if $\sum_{q=1}^\infty q!q\|f_q\|_{\mH^{\otimes q}}^2<\infty$.

For $F \in \mD$, we let the random element $D F$ with values in $\mH$ be the \emph{Malliavin derivative}, which satisfies the \emph{chain rule}. Indeed, let $\bm{F}=(F_1\ld F_d)$ where $F_i \in \mD$ for $i=1\ld d$ and let $\phi:\R^d \to \R$ be a continuous differentiable function with bounded partial derivatives. Then, by~\cite[Prop.~1.2.3]{MR2200233}, $\phi(\bm{F}) \in \mD$ and $D\phi(\bm{F})=\sum_{i=1}^d \partial_i \phi(\bm{F}) D F_i$. Throughout, use the notation: $\partial_i \phi(\bm{x})=\tfrac{\partial}{\partial x_i} \phi(\bm{x})$ as the $i$th partial derivative and $\partial_{ij} \phi(\bm{x})=\tfrac{\partial^2}{\partial x_i \partial x_j} \phi(\bm{x})$ as the second order $i,j$th partial derivative. 

The adjoint of the derivative operator $D$ is the \emph{divergence operator} $\delta$~\cite[Def.~1.3.1]{MR2200233}. The divergence operator is defined through the following two conditions: i) the domain of $\delta$, denoted $\Dom \, \delta$, is given by the set of $\mH$-valued square integrable random variables $u$ where $|\E[\langle DF,u\rangle_{\mH}]| \le c \sqrt{\E[F^2]}$ for all $F \in \mD$ and a constant $c$ depending on $u$; ii) if $u \in \Dom \, \delta$ then $\delta(u) \in L^2(\Omega)$ and 
\begin{equation}\label{eq:Dom_delta_2.5.2}
    \E[F \delta(u)]=E[\langle DF,u\rangle_\mH ], \quad \text{for all }F \in \mD.
\end{equation} The $\delta$ operator is also called the \emph{Skorohod integral}. If $\bm{F}=(F_1\ld F_d)$ and $F_i \in \mD$ for all $i=1\ld d$, we can without loss of generality assume that $F_i=\delta(u_i)$ for some $u_i \in \Dom(\delta)$. 

In the following, the operator $L$ which is the \emph{generator of the Ornstein-Uhlenbeck semigroup} and its \emph{pseudo inverse} $L^{-1}$ (see~\cite[Defs~2.8.7 \& ~2.8.10]{MR2962301}) are introduced. For $F \in L^2(\Omega)$, it holds that $F \in \Dom \, L$ if $\sum_{q=1}^\infty q^2 \E[I_q(f_q)^2]<\infty$, and hence the operator $L$ is defined for $F \in \Dom \, L$ and $L^{-1}$ for general $F \in L^2(\Omega)$ via the following relations
\begin{equation}\label{eq:defn_L_L^(-1)}
    L F = -\sum_{q=1}^\infty q I_q(f_q), \quad \text{and }\quad L^{-1} F=-\sum_{q =1}^\infty \frac{1}{q}I_q(f_q).
\end{equation} By the \emph{isometry property} of multiple integrals (see~\cite[Prop.~2.7.5]{MR2962301}), i.e.\ for $f \in \mH^{\odot q}$ and $g \in \mH^{\odot p}$ it holds that $\E[I_q(f)I_p(g)]=p!\langle f,g \rangle_{\mH^{\otimes p}}$ if $p=q$ and $\E[I_q(f)I_p(g)]=0$ otherwise, the summability criteria in $\Dom \, L$ can be rewritten as $\sum_{q=1}^\infty q^2 q! \|f_q\|_{\mH^{\otimes q}}^2<\infty$. A useful connection exists between the operators $\delta, L$ and $L^{-1}$. Indeed, by~\cite[Prop.~2.8.8]{MR2962301}, we have, for any $F \in L^2(\Omega)$, that $F \in \Dom \, L$ if and only if $F \in \mD$ and $DF \in \Dom \, \delta$, in which case $\delta D F=-LF$. Hence, as shown in~\cite[Prop.~2.8.11]{MR2962301},
\begin{equation}\label{eq:defining_relation_L_delta_D}
    F-\E [F]= L L^{-1}F=- \delta D L^{-1} F, \quad \text{ for any }F \in L^2(\Omega).
\end{equation}

As mentioned in Section~\ref{sec:main_results}, one of the main tools used to prove Theorem~\ref{thm:main_mult_clt_techncial_thm}, is Stein's kernels~\cite[Def.~1.1]{MR4312842}. We say that the $d \times d$ matrix valued function $\bm{x} \mapsto \bm{\tau^{F}}(\bm{x})=(\bm{\tau}_{i,j}^{\bm{F}}(\bm{x}))_{i,j =1\ld d}$ on $\R^d$ is the \emph{Stein's kernel} for (the law of) $\bm{F}$ if $\E[|\bm{\tau}_{i,j}^{\bm{F}}(\bm{F})|]<\infty$ for any $i,j \in \{1\ld d\}$ and 
\begin{equation}\label{eq:defn_steins_kernel}
    \sum_{j=1}^d \E[\partial_j f(\bm{F})F_j]=\sum_{i,j=1}^d \E[\partial_{ij}f(\bm{F})\bm{\tau}_{i,j}^{\bm{F}}(\bm{F})],
\end{equation} for all functions $f: \R^d \to \R$ where $f \in C^\infty$ with bounded partial derivatives of all orders. Stein's kernels often exist in the Malliavin calculus setting as suggested by~\cite[Prop.~3.7]{MR3158721}. Indeed, if $\bm{F}$ is a centred random vector with $F_i \in \mD$ for $i=1\ld d$, we define the $d \times d$ matrix $(\bm{M_F}(i,j))_{1 \le i,j \le d} = (\langle -DL^{-1} F_i, D F_j\rangle_{\mH})_{1 \le i,j \le d}$. Hence,~\cite[Prop.~3.7]{MR3158721} implies that $\bm{F}$ has a Stein's kernel given by
\begin{equation}\label{eq:SteinKernel_gen}
    \bm{\tau}^{\bm{F}}_{i,j}(\bm{x})=\E[\bm{M}_{\bm{F}}(i,j)|\bm{F}=\bm{x}], \qquad \text{for all }i,j=1\ld d \text{ and }\bm{x} \in \R^d.
\end{equation} 

The main inequality used in the proof of Theorem~\ref{thm:main_mult_clt_techncial_thm}, is~\eqref{eq:log_dim_upperbound} (\cite[Thm~1.1]{MR4312842}) stated below. If $\bm{F}$ has a Stein's kernel $\bm{\tau^{F}}$ and $\bm{Z} \sim \mathcal{N}_d(0,\bm{\Sigma})$ with $\sigma_*^2=\sigma_*^2(\bm{\Sigma})>0$, then
\begin{equation}\label{eq:log_dim_upperbound}
    d_\HR(\bm{F},\bm{Z})\le 
    \frac{C\Delta_{\bm{F}}\log_+(d)}{\sigma_*^2
    } \log_+\left(\frac{\underline{\sigma}
    \Delta_{\bm{F}}}{\overline{\sigma}
    \sigma_{*}^2
    }\right), \, \text{with } \Delta_{\bm{F}}\coloneqq \E\bigg[\max_{1 \le i,j \le d}|\bm{\Sigma}_{i,j}-\bm{\tau^{F}}_{i,j}(\bm{F})|\bigg],
\end{equation} for all $d \ge 3$. In~\eqref{eq:log_dim_upperbound}, $\underline{\sigma}$ and $\overline{\sigma}$ are both associated to $\bm{\Sigma}$. 

\subsection{General Main Result}\label{subsec:proofs_sec_technical}
Throughout, we will denote by $S_{n,i}$ the $i$th coordinate of $\bm{S}_n$ for any $i=1\ld d$. Recall $r=r_{\beta,\kappa}$ from~\eqref{defn:r_constant-2} by $r = 2e^{1/(2e)}\beta e^{(\kappa+\log(24)/2+5/(4e))/\beta}2^{1/(2\beta)}$ for all $\beta \in [1/2,1]$ and $\kappa \in \R$. We will now state and prove a slightly more general result, which implies Theorem~\ref{thm:main_theorem} under $d_\HR$.

\begin{theorem}\label{thm:main_mult_clt_techncial_thm}
Let $\bm{S}_n$ be as in~\eqref{eq:defn_S_n}, and assume that $\bm{Z}_n \sim \mathcal{N}_d(\bm{0},\bm{\Sigma}_n)$ for an invertible covariance matrix $\bm{\Sigma}_n$ with associated correlation matrix $\bm{\Lambda}_n$ and $\sigma_*^2=\sigma_*^2(\bm{\Lambda}_n)$. Assume there exists a $\bm{\theta}=(\beta,\kappa,c)\in [1/2,1]\times \R\times (0,\infty)$ such that~\eqref{eq:main_assump_a_i,q} holds. If $\beta=1/2$ assume that $\kappa <  \Upsilon$. Then, there exists a $C_{\bm{\theta}} >0$, depending only on $\bm{\theta}$, such that 
    \begin{align*}
    d_\HR(\bm{S}_n,\bm{Z}_n)&\le 
    C_{\bm{\theta}} \log_+(d) \Delta(\bm{S}_n,\bm{\Sigma}_n) \log_+\left(\frac{\Delta(\bm{S}_n,\bm{\Sigma}_n)}{\sigma_{*}^2}\right)\frac{1}{\sigma_*^2}, \quad \text{for all } n,d\ge 1, \text{ where}\\ 
    \Delta(\bm{S}_n,\bm{\Sigma}_n)&= \frac{\|\rho_n\|_{\ell^1(\Z)}^{3/2}e^{r\log_+^{1/(2\beta)}(d)}}{\sqrt{n}\log_+(d)}+\max_{1 \le i,j \le d}\left|(\bm{\Lambda}_n)_{i,j}-\frac{\E\left[S_{n,i}S_{n,j}\right]}{\sqrt{(\bm{\Sigma}_n)_{i,i}(\bm{\Sigma}_n)_{j,j}}}\right|.
    \end{align*}
\end{theorem}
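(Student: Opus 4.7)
The plan is to first rescale coordinates to the correlation scale and then invoke the hyperrectangle smoothing inequality \eqref{eq:log_dim_upperbound} from \cite[Thm~1.1]{MR4312842}. Setting $D=\Diag(\sqrt{(\bm{\Sigma}_n)_{1,1}},\dots,\sqrt{(\bm{\Sigma}_n)_{d,d}})$, the class $\HR$ is invariant under coordinate-wise positive scaling, so $d_\HR(\bm{S}_n,\bm{Z}_n)=d_\HR(D^{-1}\bm{S}_n,D^{-1}\bm{Z}_n)$ with $D^{-1}\bm{Z}_n\sim \mathcal{N}_d(\bm{0},\bm{\Lambda}_n)$ having unit diagonal, hence $\underline{\sigma}(\bm{\Lambda}_n)=\overline{\sigma}(\bm{\Lambda}_n)=1$ while $\sigma_*^2(\bm{\Lambda}_n)$ is preserved. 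Applying \eqref{eq:log_dim_upperbound} reduces the task to upper-bounding
\begin{equation*}
    \Delta_n\coloneqq \E\bigl[\max_{1\le i,j\le d}\bigl|(\bm{\Lambda}_n)_{i,j}-\bm{\tau}^{D^{-1}\bm{S}_n}_{i,j}(D^{-1}\bm{S}_n)\bigr|\bigr]
\end{equation*}
by the stated $\Delta(\bm{S}_n,\bm{\Sigma}_n)$, since \eqref{eq:log_dim_upperbound} already supplies the outer $\log_+(d)\log_+(\Delta_n/\sigma_*^2)/\sigma_*^2$ factor.

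I would next build the Stein kernel via Malliavin calculus. Putting $G_k=X(e_k)$ on an isonormal Gaussian process with $\langle e_k,e_j\rangle_\mH=\rho(k-j)$, the Hermite expansion $\varphi_i=\sum_{q\ge 2}a_{i,q}H_q$ gives $S_{n,i}=\sum_{q\ge 2}I_q(f_{n,i,q})$ with $f_{n,i,q}=n^{-1/2}a_{i,q}\sum_{k=1}^n e_k^{\otimes q}$. By \eqref{eq:defn_L_L^(-1)} and $DI_q(f)=qI_{q-1}(f)$, one has $-DL^{-1}S_{n,i}=\sum_{q\ge 2}I_{q-1}(f_{n,i,q})$, so
\begin{equation*}
    \bm{M}_{D^{-1}\bm{S}_n}(i,j) = \frac{1}{\sqrt{(\bm{\Sigma}_n)_{i,i}(\bm{\Sigma}_n)_{j,j}}}\sum_{p,q\ge 2}q\,\bigl\langle I_{p-1}(f_{n,i,p}),I_{q-1}(f_{n,j,q})\bigr\rangle_{\mH},
\end{equation*}
and \eqref{eq:defining_relation_L_delta_D} together with \eqref{eq:SteinKernel_gen} identifies its expectation with $\E[S_{n,i}S_{n,j}]/\sqrt{(\bm{\Sigma}_n)_{i,i}(\bm{\Sigma}_n)_{j,j}}$. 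Centering and the triangle inequality split $\Delta_n$ into
\begin{equation*}
    \max_{i,j}\Bigl|(\bm{\Lambda}_n)_{i,j}-\frac{\E[S_{n,i}S_{n,j}]}{\sqrt{(\bm{\Sigma}_n)_{i,i}(\bm{\Sigma}_n)_{j,j}}}\Bigr| + \E\Bigl[\max_{i,j}\bigl|\bm{M}_{D^{-1}\bm{S}_n}(i,j)-\E\bm{M}_{D^{-1}\bm{S}_n}(i,j)\bigr|\Bigr],
\end{equation*}
in which the first summand exactly matches the covariance-mismatch term of $\Delta(\bm{S}_n,\bm{\Sigma}_n)$.

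The centered-maximum term I will bound by combining hypercontractivity on the Wiener chaos with a truncation of the Hermite series. Each inner product $\langle I_{p-1}(f_{n,i,p}),I_{q-1}(f_{n,j,q})\rangle_\mH$ decomposes into contractions living in chaoses of order at most $p+q-2$, so the sub-Gaussian chaos tails from \cite[Lem.~A.7 \& Props.~A.1,~A.2]{MR3911126} combined with a standard maximal inequality over the $d^2$ pairs yield a $\sqrt{\log_+(d)}$ penalty, while the $L^2$ norms of the contractions are controlled via stationarity by $\|\rho_n\|_{\ell^1(\Z)}^{3/2}/\sqrt{n}$. Truncating at chaos level $N$, the low-order part scales polynomially in $N$ times $e^{2\kappa N}(N!)^{1-2\beta}$ (Stirling against the hypercontractive constant $(p+q-2)!^{1/2}$ and the coefficient decay $(q!)^{-\beta}$), while \eqref{eq:main_assump_a_i,q} makes the high-order tail summable; balancing with $N\asymp \log_+^{1/(2\beta)}(d)$ reproduces the factor $e^{r\log_+^{1/(2\beta)}(d)}/\log_+(d)$ entering $\Delta(\bm{S}_n,\bm{\Sigma}_n)$, with the exact $r$ of \eqref{defn:r_constant-2}.

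The main obstacle is this final optimisation: the hypercontractive penalty $(p+q-2)!^{1/2}$ is only narrowly beaten by the decay $(q!)^{-\beta}$, leaving an effective $(q!)^{1/2-\beta}$ behaviour. This is what forces sub-polynomial dependence in $d$ to demand $\beta>1/2$ and, at the critical $\beta=1/2$, imposes the quantitative threshold $\kappa<\Upsilon$, which is precisely what makes $\sum e^{2\kappa q}$ sum against the Stirling remainder and guarantees $r<3/2$. Packaging this trade-off into a clean combinatorial estimate (likely the referenced Lemma~\ref{lem:Theta_d_growth}) and carefully tracking the Stirling and union-bound constants is what will pin down the exact values $2e^{1/(2e)}$, $\log(24)/2$ and $5/(4e)$ appearing in \eqref{defn:r_constant-2}.
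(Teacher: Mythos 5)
Your outline is essentially the paper's own proof: Stein kernel via \eqref{eq:SteinKernel_gen}, the smoothing bound \eqref{eq:log_dim_upperbound} from~\cite[Thm~1.1]{MR4312842}, chaos decomposition of $S_{n,i}$, the sub-chaos maximal inequalities of~\cite{MR3911126}, Gebelein plus Young's convolution inequality for the $\|\rho_n\|_{\ell^1(\Z)}^{3/2}/\sqrt{n}$ factor, the combinatorial summation of Lemma~\ref{lem:Theta_d_growth}, and the reduction to the correlation scale by diagonal invariance of $d_\HR$ (which the paper performs at the end rather than the start -- immaterial). Two points need repair. First, your claim that the maximal inequality over the $d^2$ pairs costs only a flat $\sqrt{\log_+(d)}$ is wrong as stated: $\delta(i,j,q,\ell,n)$ lives in $\mathcal{P}_{\ell+q-2}$, so~\cite[Prop.~A.2]{MR3911126} charges $\log^{(\ell+q-2)/2}(2d^2-1+e^{(\ell+q-2)/2-1})$, a penalty that \emph{grows with the chaos order}; this is precisely what makes the series summation delicate and what produces the $e^{r\log_+^{1/(2\beta)}(d)}$ factor (your subsequent truncation-and-balancing discussion implicitly acknowledges this, and is morally equivalent to the paper's exact Mittag--Leffler-type summation $\sum_q x^q/(q!)^\beta\lesssim e^{cx^{1/\beta}}$ with $x\asymp\log^{1/2}(d)$, but the $\sqrt{\log_+(d)}$ sentence as written would trivialize the whole dimensional analysis). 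Second, \eqref{eq:log_dim_upperbound} is only valid for $d\ge 3$, whereas the theorem asserts the bound for all $d\ge 1$; the cases $d=1,2$ require the separate extension of Lemma~\ref{lem:extend_fang_koike} (embedding $\bm{F}$ into $\R^3$ by appending an independent standard Gaussian coordinate and checking the augmented kernel is still a Stein kernel), which your proposal omits.
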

Note in the setting of Theorem~\ref{thm:main_mult_clt_techncial_thm}, that $\bm{\Sigma}_n= \cov(\bm{S}_n)$ implies 
\begin{equation*}
    \max_{1 \le i,j \le d}\bigg|(\bm{\Lambda}_n)_{i,j}-\E\left[S_{n,i}S_{n,j}\right]/\sqrt{(\bm{\Sigma}_n)_{i,i}(\bm{\Sigma}_n)_{j,j}}\bigg|=0.
\end{equation*}

Moreover, we will also state and prove a slightly more general version of Theorem~\ref{thm:main_theorem} under $d_\CD$.
\begin{theorem}\label{thm:Ext_to_convex_dist}
    Let $\bm{S}_n$ be as in~\eqref{eq:defn_S_n}, and assume that $\bm{Z}_n \sim \mathcal{N}_d(\bm{0},\bm{\Sigma}_n)$ for an invertible covariance matrix $\bm{\Sigma}_n$, with associated correlation matrix $\bm{\Lambda}_n$ and $\sigma_*^2=\sigma_*^2(\bm{\Lambda}_n)$. Assume there exists a $\bm{\theta}=(\beta,\kappa,c)\in [1/2,1]\times \R\times (0,\infty)$ such that~\eqref{eq:main_assump_a_i,q} holds. If $\beta=1/2$ assume that $\kappa < -\log(3)/2$. Then, there exists a $C_{\bm{\theta}} >0$, depending only on $\bm{\theta}$, such that it for all $n,d \in \N$, holds that
     \begin{equation*}
        d_\CD(\bm{S}_n,\bm{Z}_n) \le C_{\bm{\theta}} d^{41/24} \left(d \frac{ \|\rho_n\|_{\ell^1(\Z)}^{3/2}}{\sqrt{n}} + \sqrt{\sum_{1 \le i,j \le d} \!\! \! \left(\frac{\E[S_{n,i}S_{n,j}]}{\sqrt{(\bm{\Sigma}_n)_{i,i}(\bm{\Sigma}_n)_{j,j}}}-(\bm{\Lambda}_n)_{i,j} \right)^2}\right)\frac{1}{(\sigma_*^2)^{3/2}}.
    \end{equation*}
\end{theorem}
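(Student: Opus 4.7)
The plan is to mirror the proof of Theorem~\ref{thm:main_mult_clt_techncial_thm}, but to replace the hyper-rectangle bound from~\cite{MR4312842} by the convex-set Stein kernel bound of~\cite[Thm~1.2]{MR4488569}, which gives a Hilbert--Schmidt rather than a max-type control. First, I would coordinate-wise standardise by setting $\widetilde S_{n,i} = S_{n,i}/\sqrt{(\bm{\Sigma}_n)_{i,i}}$ and letting $\widetilde{\bm Z}_n \sim \mathcal N_d(\bm 0, \bm\Lambda_n)$. Positive diagonal rescalings are invertible linear maps of $\R^d$ that bijectively preserve $\CD$, so $d_\CD(\bm S_n, \bm Z_n) = d_\CD(\widetilde{\bm S}_n, \widetilde{\bm Z}_n)$, and the minimum eigenvalue of $\cov(\widetilde{\bm Z}_n) = \bm\Lambda_n$ is exactly $\sigma_*^2$. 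Applying~\cite[Thm~1.2]{MR4488569} to $\widetilde{\bm S}_n$ versus $\widetilde{\bm Z}_n$ then delivers a bound of the form $d_\CD(\widetilde{\bm S}_n,\widetilde{\bm Z}_n) \le C\, d^{41/24}(\sigma_*^2)^{-3/2} \bigl(\E\|\bm\Lambda_n - \bm\tau^{\widetilde{\bm S}_n}(\widetilde{\bm S}_n)\|_{\mathrm{H.S.}}^2\bigr)^{1/2}$, which already supplies the universal prefactor $d^{41/24}(\sigma_*^2)^{-3/2}$.

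The next step is to convert the Stein kernel error into tractable Malliavin quantities. Using~\eqref{eq:SteinKernel_gen}, $\bm\tau^{\widetilde{\bm S}_n}_{i,j}(\bm x) = \E[\bm M_{\widetilde{\bm S}_n}(i,j)\mid \widetilde{\bm S}_n = \bm x]$ with $\bm M_{\widetilde{\bm S}_n}(i,j) = \langle -D L^{-1}\widetilde S_{n,i}, D\widetilde S_{n,j}\rangle_{\mH}$, and Jensen's inequality upgrades the conditional-expectation error pointwise to $\E[|\bm\Lambda_{n,i,j} - \bm M_{\widetilde{\bm S}_n}(i,j)|^2]$. Splitting around $\E[\bm M_{\widetilde{\bm S}_n}(i,j)]$ and noting that~\eqref{eq:defining_relation_L_delta_D} together with the adjoint relation~\eqref{eq:Dom_delta_2.5.2} yields $\E[\bm M_{\widetilde{\bm S}_n}(i,j)] = \E[\widetilde S_{n,i}\widetilde S_{n,j}] = \E[S_{n,i}S_{n,j}]/\sqrt{(\bm\Sigma_n)_{i,i}(\bm\Sigma_n)_{j,j}}$ separates the error into a deterministic bias, whose $(i,j)$-square-sum becomes exactly the second summand in the claimed bound, and a fluctuation piece $\var(\bm M_{\widetilde{\bm S}_n}(i,j))$.

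The core quantitative estimate is therefore to show that $\var(\bm M_{\widetilde{\bm S}_n}(i,j)) \le C_{\bm\theta}\, \|\rho_n\|_{\ell^1(\Z)}^3/n$ uniformly in $i,j,n,d$, so that summing the $d^2$ contributions and extracting a square root produces the $d\,\|\rho_n\|_{\ell^1(\Z)}^{3/2}/\sqrt n$ term. Substituting the Hermite expansion~\eqref{eq:defn_Hermite_expansion} of $\varphi_i,\varphi_j$ into $\widetilde S_{n,i},\widetilde S_{n,j}$ and computing $\bm M_{\widetilde{\bm S}_n}(i,j)$ via the Wiener-chaos product formula expresses it as a double series in $(q,p)$ whose variance is controlled by $\|\rho_n\|_{\ell^1(\Z)}^3/n$ multiplied by a prefactor involving $|a_{i,q}a_{j,p}|$ and the combinatorial count of contractions. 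The main obstacle is the uniform in $(i,j,n,d)$ bookkeeping of these contractions, analogous in spirit to Lemma~\ref{lem:Theta_d_growth} but adapted from max- to Hilbert--Schmidt-norm: assumption~\eqref{eq:main_assump_a_i,q} controls the coefficients by $c^2 e^{\kappa(p+q)}/(p!q!)^\beta$, and for $\beta=1/2$ the combinatorial prefactor grows roughly like $3^{p+q}$ (essentially from the binomial coefficient $\binom{p+q}{q}$ counting contraction patterns), which is precisely why the hypothesis $\kappa<-\log(3)/2$ is needed to guarantee geometric convergence of the double series. For $\beta>1/2$ the Stirling factors tame the series automatically, and reassembling the two contributions with the $d^{41/24}(\sigma_*^2)^{-3/2}$ prefactor yields the stated inequality.
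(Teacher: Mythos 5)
Your proposal follows essentially the same route as the paper's proof: apply \cite[Thm~1.2]{MR4488569} to get the $d^{41/24}$ Hilbert--Schmidt-type bound, split the Stein-kernel error into a deterministic covariance bias (yielding the second summand after passing to the correlation matrix) and a fluctuation term, and reduce the latter to showing $\var(\bm{M}_{\bm{S}_n}(i,j)) \le C_{\bm\theta}\,\|\rho_n\|_{\ell^1(\Z)}^3/n$ uniformly, with the coefficient series converging under~\eqref{eq:main_assump_a_i,q}. The order of the diagonal standardisation (you do it first, the paper rescales at the end using invariance of $d_\CD$ under invertible linear maps) is immaterial.

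One point in your sketch is imprecise and would trip you up if you executed it literally: you attribute the $3^{p+q}$ growth of the combinatorial prefactor to the binomial coefficient $\binom{p+q}{q}$ counting contraction patterns, but $\binom{p+q}{q}\le 2^{p+q}$, which would only justify the weaker threshold $\kappa<-\log(2)/2$. In the paper the factor $e^{(\ell+q)\log(3)/2}$ comes instead from the hypercontractive fourth-moment bound $\E[H_\ell(G)^4]^{1/4}\le 3^{\ell/2}\sqrt{\ell!}$ (Lemma~\ref{lem:mom_bound_Hermite}), applied after Gebelein's inequality and Cauchy--Schwarz rather than via the product formula; it is this input, combined with Stirling, that pins down $\kappa<-\log(3)/2$ as the correct condition at $\beta=1/2$. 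So your architecture is sound, but the core variance estimate needs the hypercontractivity argument (or an equivalent) rather than a pure contraction count to land on the stated hypothesis.
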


To prove Theorems~\ref{thm:main_mult_clt_techncial_thm} \&~\ref{thm:Ext_to_convex_dist}, the ensuing lemmas and estimates are needed. The proofs of the lemmas are pushed to the end of this section, except for the proof of Lemma~\ref{lem:extend_fang_koike}, which is in Appendix~\ref{app:proof_lemma}.
\begin{lemma}\label{lem:mom_bound_Hermite}
    Let $\ell\ge 2$, $H_\ell$ be the $\ell$th Hermite polynomial and $G \sim \mathcal{N}(0,1)$. Then, there exists a uniform constant $c>0$ such that
    \begin{equation}\label{eq_rough_momentbound}
        \E[H_\ell(G)^4]^{1/4} \le  c \ell^{\ell/2}e^{\ell(\log(3)/2-1/2)}\ell^{1/4}, \quad \text{ for all } \ell \ge 2.
    \end{equation}
\end{lemma}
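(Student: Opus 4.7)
The plan is to combine Nelson's hypercontractivity inequality on the Wiener chaos with Stirling's formula; no delicate estimation is needed. First I would recall that $H_\ell(G)$ belongs to the $\ell$th Wiener chaos $\mathcal{H}_\ell$ of the isonormal Gaussian process generated by $G$, since $H_\ell(G) = I_\ell(h^{\otimes \ell})$ for a suitably normalised $h \in \mathfrak{H}$ with $\|h\|_\mathfrak{H}=1$. By the Nelson hypercontractivity inequality (see, e.g., \cite[Cor.~2.8.14]{MR2962301}), any $F \in \mathcal{H}_\ell$ satisfies $\mathbb{E}[|F|^p]^{1/p} \le (p-1)^{\ell/2}\, \mathbb{E}[F^2]^{1/2}$ for all $p \ge 2$. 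Applying this with $p=4$ yields
\begin{equation*}
    \mathbb{E}[H_\ell(G)^4]^{1/4} \le 3^{\ell/2}\, \mathbb{E}[H_\ell(G)^2]^{1/2}.
\end{equation*}

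Next I would use the well-known orthogonality relation $\mathbb{E}[H_\ell(G)^2] = \ell!$ to reduce the problem to bounding $\sqrt{\ell!}$. Stirling's inequality in the form $\ell! \le e\, \ell^{\ell+1/2} e^{-\ell}$ gives $\sqrt{\ell!} \le \sqrt{e}\, \ell^{\ell/2+1/4} e^{-\ell/2}$. Combining the two estimates,
\begin{equation*}
    \mathbb{E}[H_\ell(G)^4]^{1/4} \le \sqrt{e}\, \ell^{\ell/2}\, \ell^{1/4}\, e^{\ell(\log(3)/2 - 1/2)},
\end{equation*}
which is the claimed inequality with $c = \sqrt{e}$ and holds for every $\ell \ge 2$ (indeed for every $\ell \ge 1$).

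There is no real obstacle here: the only structural ingredient is hypercontractivity on a single Wiener chaos, and the prefactor $3^{\ell/2}$ it produces is exactly what yields the exponential factor $e^{\ell \log(3)/2}$ in the bound. Stirling then supplies the $\ell^{\ell/2} e^{-\ell/2}$ and the polynomial correction $\ell^{1/4}$. If a self-contained argument were preferred over quoting hypercontractivity, one could instead expand $H_\ell(G)^2$ via the product formula $H_\ell H_\ell = \sum_{r=0}^\ell r!\binom{\ell}{r}^2 H_{2\ell-2r}$ and compute $\mathbb{E}[H_\ell(G)^4] = \sum_{r=0}^{\ell}(r!)^2\binom{\ell}{r}^4 (2\ell-2r)!$ directly, but this route requires a more involved combinatorial estimate to reproduce the same $3^{\ell/2}$ factor, and hypercontractivity is the cleaner tool.
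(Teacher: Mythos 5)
Your proof is correct and follows essentially the same route as the paper: hypercontractivity on the $\ell$th Wiener chaos gives $\E[H_\ell(G)^4]^{1/4}\le 3^{\ell/2}\sqrt{\ell!}$, and Stirling's inequality finishes the estimate. The only difference is the exact form of the Stirling bound used, which merely changes the value of the uniform constant $c$.
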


Recall that the product log function $W(x)$ is defined to be the inverse function of $f(w)=we^w$, i.e.\ $W(x)$ is the principal solution to $W(x) e^{W(x)}=x$ for $x \ge 0$, where e.g.\ $W(e^{-1+1/(2e)})=0.3208\ldots$. Recall the definition of $r=r_{\beta,\kappa}$ from~\eqref{defn:r_constant-2}.

\begin{lemma}\label{lem:Theta_d_growth} Assume that $d \ge 2$, let $(f_q)_{q \ge 2}\subset [0,\infty)$ be a sequence where there exists $\bm{\theta} \in [1/2,1]\times \R\times (0,\infty)$
such that $f_q \le c e^{\kappa q}(q!)^{-\beta}$ for $q \ge 2$. Let
 \begin{align}\label{eq:theta_d_denf}
     \Theta(d)\coloneq \sum_{\ell,q =2}^\infty&\bigg\{ f_{\ell}f_q\left(\frac{4e\log(2d^2-1+e^{(\ell+q)/2})}{\ell+q-2}\right)^{(\ell+q-2)/2}\\
         &\quad \times \ell^{(\ell-1)/2}e^{(\ell-1)(\log(3)/2-1/2)}\ell^{1/4}q^{(q-1)/2}e^{(q-1)(\log(3)/2-1/2)}q^{5/4}\bigg\}.
 \end{align}
    \noindent(i) Let $\alpha \in (0,\sqrt{W(e^{-1 + 1/(2 e)})/e^{1/(2e)}})$, $\kappa = \log(\alpha)-\log(24)/2-5/(4e)$ and $\beta =1/2$. Then there exists a $K_{\bm{\theta}}$, only dependent on $\bm{\theta}$, such that $\Theta(d) \le 
    K_{\bm{\theta}}d^{r}/\log_+(d)$, for all $d \ge 1$.

    \noindent(ii) Let $c>0$, $\kappa \in \R$ and $\beta \in (1/2,1]$. Then there exists a $K_{\bm{\theta}}$, only dependent on $\bm{\theta}$, such that $\Theta(d) \le  
    K_{\bm{\theta}} \exp\{r \log_+^{1/(2\beta)}(d)\}/\log_+(d)$, for all $d \ge 1$.
\end{lemma}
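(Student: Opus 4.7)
The plan is to combine Stirling's approximation with a discrete Laplace/saddle-point analysis on $s=\ell+q$. Using $\ell!\ge\sqrt{2\pi\ell}(\ell/e)^\ell$, one has $\ell^{(\ell-1)/2}\le(\ell!)^{1/2}e^{\ell/2}/((2\pi)^{1/4}\ell^{3/4})$ and likewise for $q$. Combined with the hypothesis $f_q\le c e^{\kappa q}(q!)^{-\beta}$, this consolidates the awkward $\ell^{(\ell-1)/2}q^{(q-1)/2}$ factors with $f_\ell f_q$ and, after collecting the $e^{(\ell-1+q-1)(\log(3)/2-1/2)}$ factor, produces a summand dominated by
\[
C_{\bm{\theta}}\,(\ell!q!)^{1/2-\beta}\,e^{\gamma s}\,\bigl(4eA(s)/(s-2)\bigr)^{(s-2)/2}\,\frac{\ell^{1/4}q^{5/4}}{(\ell q)^{3/4}},
\]
with $\gamma:=\kappa+\log(3)/2$ and $A(s):=\log(2d^2-1+e^{s/2})$. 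Summing over $\ell+q=s$ with $\ell,q\ge 2$ costs at most an $O(s)$ factor (since $\sum_{\ell=2}^{s-2}(q/\ell)^{1/2}=O(s)$), and by log-convexity of $\log\Gamma$ the product $\ell!q!$ at fixed $s$ is minimized at $(\ell,q)=(2,s-2)$, so $(\ell!q!)^{1/2-\beta}$ may be replaced (up to a constant) by $((s-2)!)^{1/2-\beta}$.

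It then remains to estimate $\sum_{s\ge 4}T(s)$, whose logarithm equals $\psi(s):=\gamma s+\tfrac{s-2}{2}\log(4eA(s)/(s-2))+(\tfrac12-\beta)\log((s-2)!)$ up to an $O(\log s)$ correction. I would split at $s_0\asymp 4\log d$: below $s_0$, $A(s)\le 2\log d+\log 3$; above $s_0$, $A(s)\le s/2+\log 2$. In the low regime, differentiation gives a saddle at $s^{\ast}-2=4Ae^{2\gamma}$ in case~(i) (the factorial term vanishes) and at $s^{\ast}\asymp(\log d)^{1/(2\beta)}$ in case~(ii) (via Stirling, the factorial term dominates the linear $\gamma s$). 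Substituting yields $\psi(s^{\ast})\le 2Ae^{2\gamma}\le 4e^{2\gamma}\log d+O(1)$ in case~(i) and $\psi(s^{\ast})\le\beta s^{\ast}\le r\log^{1/(2\beta)}(d)+O(1)$ in case~(ii). A direct computation with $\kappa=\log(\alpha)-\log(24)/2-5/(4e)$ in case~(i) gives $4e^{2\gamma}=\alpha^2/(2e^{5/(2e)})$, comfortably below $r=2\alpha^2 e^{1/(2e)}$; an analogous unfolding of the constants in~\eqref{defn:r_constant-2} exhibits the same strict dominance in case~(ii). In the high regime $s>s_0$, one has $(4eA(s)/(s-2))^{(s-2)/2}\le(2e+o(1))^{(s-2)/2}$, and combined with $e^{\gamma s}\cdot((s-2)!)^{1/2-\beta}$ the tail is geometrically small in case~(i) --- the Lambert-$W$ constraint $\alpha<\sqrt{W(e^{-1+1/(2e)})/e^{1/(2e)}}$ being precisely calibrated so that the common ratio $e^{\gamma}(2e)^{1/2}$ is strictly less than $1$ --- and super-exponentially small in case~(ii). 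Gluing the two regimes and applying the discrete Laplace estimate $\sum_s e^{\psi(s)}\le C\sqrt{|\psi''(s^{\ast})|^{-1}}\,e^{\psi(s^{\ast})}$ then yields the claimed bounds, the extra $1/\log_+(d)$ in the denominator coming from the strict gap between $r$ and the actual saddle exponent.

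The main obstacle will be the careful bookkeeping of multiplicative constants: checking that the $r$ defined in~\eqref{defn:r_constant-2} strictly dominates the saddle coefficient $4e^{2\gamma}$ in case~(i) and its analogue in case~(ii), and that the Lambert-$W$ threshold in~(i) is calibrated precisely so that the Case-2 tail sum converges. This requires unwinding the additive constants $\log(24)/2$ and $5/(4e)$ in $\kappa$ (case~(i)) and tracking the interaction between $2^{1/(2\beta)}$ and the $(8e)^{1/(2\beta)}$ arising from the Stirling-approximated saddle location $s^{\ast}\propto(8e\log d)^{1/(2\beta)}$ (case~(ii)). Provided this strict gap is established, the accumulated polynomial-in-$\log d$ corrections from Stirling remainders, from the partition sum, and from the discrete Laplace expansion can all be absorbed into the constant $K_{\bm{\theta}}$, leaving the required factor of $1/\log_+(d)$.
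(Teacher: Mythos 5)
Your route is genuinely different from the paper's: the paper never touches a saddle point. It first uses the elementary inequality $(\ell+q-2)^{(\ell+q-2)/2}\ge \ell^{(\ell-1)/2}q^{(q-1)/2}$ to cancel the denominator, then splits $\log(2d^2-1+e^{(\ell+q)/2})\le\log(2d^2)+(\ell+q)/2$ so that the double sum factorises into a \emph{product} of two single sums, which are then evaluated exactly via the Mittag--Leffler identity $\sum_q x^{q/2}/\Gamma(1+q/2)=e^x(\erf(\sqrt x)+1)$ (after an $\ell^p$-embedding step to reduce $(q!)^{-\beta}$ to $(q!)^{-1/2}$ in case (ii)). That factorisation is what lets the paper extract the precise exponent $r$ with no asymptotic approximation. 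Your diagonal/Laplace strategy can in principle work, but it puts all the weight on constant bookkeeping that you explicitly defer — and for this lemma the constant \emph{is} the statement.

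There is also a concrete error in your reduction for case (ii). Convexity of $\log\Gamma$ implies that, for fixed $s=\ell+q$, the map $\ell\mapsto\log(\ell!)+\log((s-\ell)!)$ is convex, hence \emph{minimised at the centre} $\ell=q=s/2$ and \emph{maximised at the endpoints} $(2,s-2)$ — the opposite of what you assert. Since $1/2-\beta<0$, the factor $(\ell!q!)^{1/2-\beta}$ is therefore \emph{largest} at the centre of the diagonal, where it equals roughly $((s/2)!)^{1-2\beta}$, and by Stirling $((s/2)!)^{1-2\beta}\approx 2^{s(\beta-1/2)}\,((s)!)^{1/2-\beta}$ up to polynomial factors. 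Replacing the diagonal maximum by $((s-2)!)^{1/2-\beta}$ thus undershoots by an exponential factor $2^{s(\beta-1/2)}$; it is not a valid upper bound, and it shifts your effective $\gamma$ by $(\beta-\tfrac12)\log 2$, which changes the saddle location and the resulting coefficient of $(\log d)^{1/(2\beta)}$. (A corrected computation does still land strictly below the paper's $r$ — the ratio of the corrected saddle coefficient to $r$ is $4^{-1/(2\beta)}e^{-1/(2e)-5/(4e\beta)}<1$ — but that verification is exactly the step you leave open.) A smaller inaccuracy: the Lambert-$W$ threshold is \emph{not} calibrated so that your high-regime ratio $e^{\gamma}\sqrt{2e}<1$; that inequality holds for all $\alpha<2e^{5/(4e)-1/2}\approx1.92$, far beyond the threshold $\approx0.517$. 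In the paper the condition is needed for a different sum ($\Theta_2$, where a factor $e^{\ell e^{1/(2e)}\alpha^2/2}$ appears and one needs $e^{e^{1/(2e)}\alpha^2/2+1/2}\alpha<1$), so your explanation of why the hypothesis is there is mistaken even though your tail still converges under it. Until the diagonal maximum is corrected and the resulting constants are actually compared with $r$ from~\eqref{defn:r_constant-2}, the proposal does not establish the lemma.
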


\begin{lemma}\label{lem:extend_fang_koike}
    If $\bm{F}$ has a Stein's kernel $\bm{\tau^{F}}$ and $\bm{Z} \sim \mathcal{N}_d(0,\bm{\Sigma})$ with $\sigma_*^2=\sigma_*^2(\bm{\Sigma})>0$, then
\begin{equation}\label{eq:log_dim_upperbound_2}
    d_\HR(\bm{F},\bm{Z})\le 
    \frac{C\Delta_{\bm{F}}\log_+(d)}{\sigma_*^2\wedge 1
    } \log_+\left(\frac{(\underline{\sigma}\wedge 1)
    \Delta_{\bm{F}}}{(\overline{\sigma}\vee 1)
    (\sigma_{*}^2\wedge 1)
    }\right), \, \text{with } \Delta_{\bm{F}}\coloneqq \E\bigg[\max_{1 \le i,j \le d}|\bm{\Sigma}_{i,j}-\bm{\tau^{F}}_{i,j}(\bm{F})|\bigg],
\end{equation} for $d =1$ and $d=2$. In~\eqref{eq:log_dim_upperbound}, $\underline{\sigma}$ and $\overline{\sigma}$ are both associated to $\bm{\Sigma}$. 
\end{lemma}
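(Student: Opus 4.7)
\textbf{Proof proposal for Lemma~\ref{lem:extend_fang_koike}.} The plan is to reduce the low-dimensional cases $d=1,2$ to the known case $d \ge 3$ by embedding $\bm{F}$ into a three-dimensional vector via padding with independent standard Gaussians. Concretely, take $k = 3 - d$ independent $\mathcal{N}(0,1)$ variables $G_1,\dots,G_k$, independent of $\bm{F}$ and $\bm{Z}$, and set $\wt{\bm{F}} = (\bm{F},G_1,\dots,G_k)$ and $\wt{\bm{Z}} = (\bm{Z},G_1,\dots,G_k)$. Then $\wt{\bm{Z}} \sim \mathcal{N}_3(\bm{0},\wt{\bm{\Sigma}})$ with $\wt{\bm{\Sigma}} = \Diag(\bm{\Sigma},\bm{I}_k)$, so its eigenvalues are those of $\bm{\Sigma}$ together with $k$ copies of $1$. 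In particular, $\sigma_*^2(\wt{\bm{\Sigma}}) = \sigma_*^2 \wedge 1$, $\underline{\sigma}^2(\wt{\bm{\Sigma}}) = \underline{\sigma}^2 \wedge 1$, and $\overline{\sigma}^2(\wt{\bm{\Sigma}}) = \overline{\sigma}^2 \vee 1$.

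The first key step is to identify a Stein's kernel for $\wt{\bm{F}}$. I would show that
\[
\wt{\bm{\tau}}^{\wt{\bm{F}}}(\bm{x},y_1,\dots,y_k) \coloneqq \begin{pmatrix} \bm{\tau}^{\bm{F}}(\bm{x}) & \bm{0} \\ \bm{0} & \bm{I}_k \end{pmatrix}
\]
satisfies~\eqref{eq:defn_steins_kernel} for $\wt{\bm{F}}$. This follows by conditioning on $(G_1,\dots,G_k)$ for the first $d$ coordinates (where $\bm{\tau}^{\bm{F}}$ handles the equality by definition), and by applying the standard Gaussian integration by parts identity $\E[\partial_j f(\wt{\bm{F}})G_{j-d}] = \E[\partial_{jj}f(\wt{\bm{F}})]$ for the padded coordinates $j = d+1,\dots,3$, conditioning on $\bm{F}$. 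The cross terms vanish by independence.

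Next, I would compare the quantities appearing in~\eqref{eq:log_dim_upperbound} for $\wt{\bm{F}}$. Since $\wt{\bm{\Sigma}}$ and $\wt{\bm{\tau}}^{\wt{\bm{F}}}$ agree (both equal $1$ or $0$) on every entry involving an index in $\{d+1,\dots,3\}$, the maximum defining $\Delta_{\wt{\bm{F}}}$ is attained on the upper-left $d \times d$ block, giving $\Delta_{\wt{\bm{F}}} = \Delta_{\bm{F}}$. Moreover, any hyper-rectangle $A \in \HR$ in $\R^d$ lifts to $A \times \R^k \in \HR$ in $\R^3$, and $\p(\wt{\bm{F}} \in A \times \R^k) = \p(\bm{F} \in A)$ while $\p(\wt{\bm{Z}} \in A \times \R^k) = \p(\bm{Z} \in A)$ by independence, so $d_\HR(\bm{F},\bm{Z}) \le d_\HR(\wt{\bm{F}},\wt{\bm{Z}})$.

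Applying~\eqref{eq:log_dim_upperbound} in dimension $3$ to $(\wt{\bm{F}},\wt{\bm{Z}})$ yields
\[
d_\HR(\bm{F},\bm{Z}) \le \frac{C \Delta_{\bm{F}} \log_+(3)}{\sigma_*^2 \wedge 1}\log_+\!\left(\frac{(\underline{\sigma}\wedge 1)\Delta_{\bm{F}}}{(\overline{\sigma}\vee 1)(\sigma_*^2 \wedge 1)}\right),
\]
and since $\log_+(d) = 1$ for $d \in \{1,2\}$, the factor $\log_+(3)$ can be absorbed into a universal constant while replacing the leading $\log_+(3)$ by $\log_+(d)$ times an absolute constant. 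This yields~\eqref{eq:log_dim_upperbound_2}. No step here is difficult; the only point requiring a little care is verifying the block form of the Stein kernel, which amounts to a clean conditioning argument combined with ordinary Gaussian integration by parts.
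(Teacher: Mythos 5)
Your proposal is correct and follows essentially the same route as the paper: pad $\bm{F}$ with independent standard Gaussians up to dimension $3$, verify that the block-diagonal matrix with $\bm{\tau}^{\bm{F}}$ and an identity block is a Stein's kernel for the padded vector (via Gaussian integration by parts for the new coordinates), observe that $d_\HR$ does not decrease and $\Delta$ is unchanged under the embedding, and invoke the $d=3$ bound. The only cosmetic difference is that you treat $d=1$ and $d=2$ uniformly with $k=3-d$ padding variables, whereas the paper writes out $d=2$ and remarks that $d=1$ is analogous.
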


Recall that $(G_k)_{k \in \N}$ is a centred stationary Gaussian sequence with $\rho(j-k)=\E[G_jG_k]$ and $\rho(0)=1$. Hence, by~\cite[Rem.~2.1.9]{MR2962301} we may choose an isonormal Gaussian process $\{X_h:h \in \mH\}$, such that $(G_k)_{k \in \N} \eqd \{X_{e_k}:k \in \N\}$ where $(e_k)_{k \in \N}\subset \mH$ verifying $\langle e_k,e_j\rangle_\mH = \rho(j-k)$ for all $j,k \in \N$. 
The proof of Theorem~\ref{thm:main_mult_clt_techncial_thm} relies on Lemma~\ref{lem:Theta_d_growth}, with the proof postponed to the end of this section. 
\begin{proof}[Proof of Theorem~\ref{thm:main_mult_clt_techncial_thm}]
By~\cite[Prop.~3.7]{MR3158721}, $\bm{S}_n$ has a Stein's kernel given by~\eqref{eq:SteinKernel_gen}, and hence, by~\eqref{eq:log_dim_upperbound}, it follows that
    \begin{align}
        d_\HR(\bm{S}_n,\bm{Z}_n)&\le 
    C \frac{\wt\Delta(\bm{S}_n,\bm{\Sigma}_n)}{\sigma_*^2}\log_+(d) \log_+\left(\frac{\underline{\sigma}\wt\Delta(\bm{S}_n,\bm{\Sigma}_n)}{\overline{\sigma} \sigma_{*}^2}\right), \quad \text{ for all }n \in \N, \, d \ge 3 \text{ with } \label{eq:bound_d_ge_3}\\
    \wt\Delta(\bm{S}_n,\bm{\Sigma}_n) &\coloneqq  \E\bigg[\max_{1 \le i,j \le d}|(\bm{\Sigma}_n)_{i,j}-\bm{\tau}^{\bm{S}_n}_{i,j}(\bm{S}_n)|\bigg], \label{eq:delta_tilde_Sigma}
    \end{align} where $\sigma_*^2=\sigma_*^2(\bm{\Sigma}_n)$, $\underline{\sigma}^2=\underline{\sigma}^2(\bm{\Sigma}_n)$ and $\overline{\sigma}^2=\overline{\sigma}^2(\bm{\Sigma}_n)$. If $d=1$ or $d=2$, then Lemma~\ref{lem:extend_fang_koike} implies that 
    \begin{equation}\label{eq:bound_d_=1,=2}
        d_\HR(\bm{S}_n,\bm{Z}_n)\le 
    C \frac{\wt\Delta(\bm{S}_n,\bm{\Sigma}_n)}{\sigma_*^2\wedge 1}\log_+(d) \log_+\left(\frac{(\underline{\sigma}\wedge 1)\wt\Delta(\bm{S}_n,\bm{\Sigma}_n)}{(\overline{\sigma} \vee 1) (\sigma_{*}^2\wedge 1)}\right), \quad \text{ for all }n \in \N,
    \end{equation}
    with $\wt\Delta(\bm{S}_n,\bm{\Sigma}_n)$ as in~\eqref{eq:delta_tilde_Sigma}. Hence, the main aim is now to find a bound for $\wt\Delta(\bm{S}_n,\bm{\Sigma}_n)$ for all $n,d \in \N$.

    Applying~\cite[Prop.~2.8.8]{MR2962301}, it follows that $\delta D S_{n,i}=-L S_{n,i}$. Hence, by~\eqref{eq:Dom_delta_2.5.2} and~\eqref{eq:defining_relation_L_delta_D}, it follows that $\E[\langle -DL^{-1}S_{n,i}, DS_{n,j} \rangle_{\mH}]
    =\E[S_{n,i}S_{n,j}]$ for all $n \ge 1$ and $i,j=1\ld d$. Thus, by Jensen's inequality and the triangle inequality, we get that
    \begin{align*}
        \wt\Delta &(\bm{S}_n,\bm{\Sigma}_n) \le \E\bigg[\max_{1 \le i,j \le d}|(\bm{\Sigma}_n)_{i,j}-\langle-DL^{-1}S_{n,i}, DS_{n,j} \rangle_{\mH}|\bigg] \\
        &\le \max_{1 \le i,j \le d}|(\bm{\Sigma}_n)_{i,j}-\E[S_{n,i}S_{n,j}]| + \E\bigg[\max_{1 \le i,j \le d}|\E[S_{n,i}S_{n,j}]-\langle -DL^{-1}S_{n,i}, DS_{n,j} \rangle_{\mH}|\bigg]\\
        &= \max_{1 \le i,j \le d}|(\bm{\Sigma}_n)_{i,j}-\E[S_{n,i}S_{n,j}]| + \E\left[\max_{1 \le i,j \le d}|\Delta_n(i,j)|\right],
    \end{align*} where $\Delta_n(i,j)\coloneqq \E[\langle -DL^{-1}S_{n,i}, DS_{n,j} \rangle_{\mH}]-\langle -DL^{-1}S_{n,i}, DS_{n,j} \rangle_{\mH}$ for all $n \in \N$ and $i,j=1\ld d$. The purpose of the remainder of the proof is to bound $\E\left[\max_{1 \le i,j \le d}|\Delta_n(i,j)|\right]$. 

    The next step of the proof is to reduce the problem to working with sums of multiple integrals. By the definition of $S_{n,i}$, and since $\varphi_i$ has Hermite rank $m_i\ge 2$ for $i=1\ld d$, it follows that
    \begin{equation*}
        S_{n,i}= 
        \frac{1}{\sqrt{n}} \sum_{k=1}^n \sum_{ \ell=2}^\infty a_{i,\ell} H_\ell(G_k) \eqd \frac{1}{\sqrt{n}} \sum_{k=1}^n \sum_{ \ell=2}^\infty a_{i,\ell} H_\ell(X_{e_k}) =  \sum_{ \ell=2}^\infty \frac{a_{i,\ell}}{\sqrt{n}} \sum_{k=1}^n H_\ell(X_{e_k}),
    \end{equation*} for all $n \in \N$ and $i=1\ld d$.
    Due to~\cite[Thm~2.7.7]{MR2962301}, it follows that
    \begin{equation}\label{eq:f_n,i_kernels}
        S_{n,i} \eqd \sum_{ \ell=2}^\infty \frac{a_{i,\ell}}{\sqrt{n}} \sum_{k=1}^n I_\ell (e_k^{\otimes \ell})= \sum_{ \ell=2}^\infty I_\ell \left(\frac{a_{i,\ell}}{\sqrt{n}} \sum_{k=1}^n e_k^{\otimes \ell}\right)=\sum_{ \ell=2}^\infty I_\ell \left(f_n(i,\ell)\right),
    \end{equation} where $f_n(i,\ell)=a_{i,\ell} n^{-1/2} \sum_{k=1}^n e_k^{\otimes \ell} \in \mH^{\odot \ell} $ for all $n \in \N$ and $i \in \{1\ld d\}$. Next, for any $i \in \{1\ld d\}$, we note that $D S_{n,j} \eqd\sum_{ q =2}^\infty D I_{q}(f_n(j,q))$ and $-DL^{-1}S_{n,i}\eqd \sum_{\ell=2}^\infty \ell^{-1} D I_{\ell}(f_n(i,\ell))$, which follows by~\eqref{eq:defn_L_L^(-1)} and~\cite[Prop.~2.7.4]{MR2962301}. Thus, by~\eqref{eq:f_n,i_kernels}, we have that $\Delta_n(i,j)\eqd \sum_{\ell,q=2}^\infty \delta(i,j,q,\ell,n)$, where
    \begin{align}
        \delta(i,j,q,\ell,n) 
        &\coloneqq 
        \frac{1}{\ell}\big( \E\big[\big\langle D I_{\ell}(f_n(i,\ell)), DI_{q}(f_n(j,q)) \big\rangle_{\mH}\big]-\big\langle DI_{\ell}(f_n(i,\ell)), DI_{q}(f_n(j,q)) \big\rangle_{\mH}\big).\label{eq_defn_delta_i,j,q,l}
    \end{align} Hence, the following inequality follows directly from the triangle inequality: 
    \begin{equation}\label{eq:triangle_inequ_sec5}
        \E\big[\max_{1 \le i,j \le d}| \Delta_n(i,j)|\big] \le \sum_{\ell,q =2}^\infty \E\big[\max_{1 \le i,j \le d}|\delta(i,j,q,\ell,n)|\big].
    \end{equation} 
     Note that $D(G_k)=e_k$ by~\cite[Prop.~2.3.7]{MR2962301} and $H_\ell'(x)=\ell H_{\ell-1}(x)$ by~\cite[Prop.~1.4.2(i)]{MR2962301}. Hence, by definition of the kernels $f_n(i,\ell)$ together with the chain rule, it follows that
    \begin{align}\label{eq:Product_hermite_form_delta}
    \begin{aligned}
        &\frac{1}{\ell}\langle DI_\ell(f_n(i,\ell)),DI_q(f_n(j,q))\rangle_\mH= 
        \frac{1}{\ell} \bigg\langle D\bigg(\frac{a_{i,\ell}}{\sqrt{n}}\sum_{k=1}^nI_\ell\big(e_k^{\otimes \ell}\big)\bigg),D\bigg(\frac{a_{j,q}}{\sqrt{n}}\sum_{r=1}^nI_q\big(e_r^{\otimes q}\big)\bigg)\bigg\rangle_\mH\\
       &\qquad  =
        \frac{a_{i,\ell} a_{j,q}}{\ell n} \sum_{k,r=1}^n \langle D I_\ell\big(e_k^{\otimes \ell}\big),D I_q\big(e_r^{\otimes q}\big)\rangle_\mH 
        \eqd 
        \frac{a_{i,\ell} a_{j,q}}{\ell n} \sum_{k,r=1}^n  \langle D H_\ell(G_k),D H_q(G_r)\rangle_\mH\\
        &\qquad =
        \frac{a_{i,\ell} a_{j,q} q}{n} \sum_{k,r=1}^n H_{\ell-1}(G_k)H_{q-1}(G_r) \rho(k-r).
    \end{aligned}
    \end{align} Denote $\|\cdot \|_2=\sqrt{\E[|\cdot|^2]}$. By~\eqref{eq:Product_hermite_form_delta}, it follows that $\delta(i,j,q,\ell,n) \in \mathcal{P}_{\ell+q-2}$ for all $q,\ell \ge 2$ (see Section~\ref{sec:bascis_malliavin_proofs} for the definition of $\mathcal{P}_q$). By~\cite[Def.~A.1]{MR3911126}, we say that a variable $X$ is a sub-$r$th chaos random variable relative to scale $R\ge 0$, if for a positive integer $r$ we have $\E[\exp((|X|/R)^{r/2})]\le 2$. Hence,~\cite[Prop.~A.1]{MR3911126} implies the existence of a constant $M_{\ell+q-2}>0$, only dependent on $q+\ell$, such that $\delta(i,j,q,\ell,n)$ is a sub-$(\ell+q-2)$th chaos random variable relative to scale $M_{\ell+q-2}\|\delta(i,j,q,\ell,n)\|_2$. By~\cite[Prop.~A.2]{MR3911126}, it thus follows for all $\ell, q \ge 2$ that
    \begin{equation*}
        \E\bigg[\max_{1 \le i,j \le d}|\delta(i,j,q,\ell,n)|\bigg] \le M_{\ell+q-2} \log^{(q+\ell-2)/2}(2d^2-1+e^{(\ell+q-2)/2-1})\max_{1\le i,j \le d} \|\delta(i,j,q,\ell,n)\|_2.
    \end{equation*} 
    
    Next, we show that $M_{\varpi}=(4e/\varpi)^{\varpi/2}$ for $\varpi>0$. Indeed, by the proof of~\cite[Lem.~A.7]{MR3911126}, it suffices to find a $M_{\varpi}>0$ only dependent on $\varpi$, which satisfies $\sum_{k=1}^\infty (2k/\varpi)^kM_\varpi^{-2k/\varpi}(k!)^{-1} \le 1$. Using that $(k/e)^k \le k!$, and $2e\varpi^{-1} M_\varpi^{-2/\varpi}=1/2 $ for our choice of $M_\varpi$, it follows that
\begin{equation*}
        \sum_{k=1}^\infty \frac{(2k/\varpi)^k}{k!M_\varpi^{2k/\varpi}} 
        \le \sum_{k=1}^\infty \frac{(2k/\varpi)^k}{(k/e)^k M_\varpi^{2k/\varpi}}
        = \sum_{k=1}^\infty \left(\frac{2e}{\varpi M_\varpi^{2/\varpi}} \right)^k=1.
    \end{equation*} For the remainder of the proof, we thus choose $M_{\ell+q-2}\coloneqq(4e/(\ell+q-2))^{(\ell+q-2)/2}$ for all $\ell,q \ge 2$.

    Next, we will find an explicit bound for $\max_{1\le i,j \le d} \|\delta(i,j,q,\ell,n)\|_2$. Note that
    \begin{equation}\label{eq:L^2_norm_delta}
        \|\delta(i,j,q,\ell,n)\|_2^2=\var(\langle -DL^{-1}I_\ell(f_n(i,\ell)),DI_q(f_n(j,q))\rangle_\mH).
    \end{equation} Hence, by~\eqref{eq:Product_hermite_form_delta} and~\eqref{eq:L^2_norm_delta}, it holds that
    \begin{align*}
         \|\delta(i,j,q,\ell,n)\|_2^2\le\frac{ a_{i,\ell}^2 a_{j,q}^2 q^2}{ n^2}\!\!\sum_{\substack{k,k',r,r' \\ \in \{1\ld n\}}} |\cov(H_{\ell-1}(G_k)H_{q-1}(G_r),H_{\ell-1}(G_{k'})H_{q-1}(G_{r'})) \rho(k-r)\rho(k'-r')|.
    \end{align*} Next, by Gebelin's inequality~\cite[Thm~2.3]{MR3978683}, it follows that
    \begin{align}
    \begin{aligned}\label{eq_Gebelin'_inequality}
         &\cov(H_{\ell-1}(G_k)H_{q-1}(G_r),H_{\ell-1}(G_{k'})H_{q-1}(G_{r'})) \\
        & \qquad \le \theta \sqrt{\var(H_{\ell-1}(G_k)H_{q-1}(G_r))}\sqrt{\var(H_{\ell-1}(G_{k'})H_{q-1}(G_{r'}))}, \quad  \text{ for all }q,\ell \ge 2,
    \end{aligned}
    \end{align} where $\theta=\max\{|\rho(k-k')|,|\rho(k-r')|,|\rho(r-r')|,|\rho(r-k')|\}$. Moreover, by Cauchy-Schwarz,
    \begin{align}
    \begin{aligned}\label{eq:Cauchy_schwarz}
        \sqrt{\var(H_{\ell-1}(G_k)H_{q-1}(G_r))} &\le \sqrt{\E[H_{\ell-1}(G_k)^2H_{q-1}(G_r)^2]} \\
        &\le \E[H_{\ell-1}(G_1)^4]^{1/4} \E[H_{q-1}(G_1)^4]^{1/4} \coloneqq \wt C_{q,\ell}<\infty.
    \end{aligned}
    \end{align} Thus, using $\theta \le |\rho(k-k')|+|\rho(k-r')|+|\rho(r-r')|+|\rho(r-k')|$ and that $\sum_{k,k',r,r'=0}^{n-1} |\rho(k-k')\rho(k-r)\rho(k'-r')|=\sum_{k,k',r,r'=0}^{n-1} |\rho(a-b)\rho(k-r)\rho(k'-r')|$ for $(a,b) \in \{(k,r'),(r,r'),(r,k')\}$, it holds that
    \begin{equation}\label{eq:proof_bound_indep_d}
        \|\delta(i,j,q,\ell,n)\|_2^2 \le \frac{4 a_{i,\ell}^2 a_{j,q}^2 q^2}{ n^2} \wt C_{q,\ell}^2 \sum_{k,k',r,r'=0}^{n-1} |\rho(k-k')\rho(k-r)\rho(k'-r')|. 
    \end{equation} For a sequence $s=(s_k)_{k \in \Z}$, recall that $\|s\|_{\ell^1(\Z)}=\sum_{k \in \Z}|s_k|$ and $\rho_n(k)= |\rho(k)|\1_{\{|k|<n\}}$ for all $k \in \Z$ and $n \in \N$. As in the proof of~\cite[Cor.~1.4~i)]{MR4488569}, applying Young's inequality for convolutions twice, implies
    \begin{align}\label{eq:ineq_Youngs_applic}
    \begin{aligned}
        \sum_{k,k',r,r'=0}^{n-1} |\rho(k-k')\rho(k-r)\rho(k'-r')|
        &\le 
        \sum_{k,r=0}^{n-1} (\rho_n * \rho_n * \rho_n)(k-r) \\
        &\le n\|\rho_n * \rho_n * \rho_n\|_{\ell^1(\Z)} \le n\|\rho_n\|_{\ell^1(\Z)}^3.
    \end{aligned}
    \end{align}
Define $K_n\coloneqq 2\|\rho_n\|_{\ell^1(\Z)}^{3/2}n^{-1/2}$ for all $n \in \N$. Using~\eqref{eq:proof_bound_indep_d} and~\eqref{eq:ineq_Youngs_applic}, it follows for all $n \in \N$ that
    \begin{equation}\label{eq:L1norm_max_Delta_n}
        \E\bigg[\max_{1 \le i,j \le d}|\Delta_n(i,j)|\bigg] \le K_n\sum_{q,\ell=2}^\infty  M_{\ell+q-2} \wt C_{q,\ell}q \log^{(\ell+q-2)/2}(2d^2-1+e^{(\ell+q)/2})\max_{1 \le i,j \le d}|a_{i,\ell} a_{j,q}|.
    \end{equation} 
    
    It remains to show that~\eqref{eq:L1norm_max_Delta_n} is finite and has the correct dimensional dependence. To show this, a tractable bound for $\wt C_{q,\ell}$ for any $q,\ell \ge 2$ is needed. By the fourth moment bound of Hermite polynomials from Lemma~\ref{lem:mom_bound_Hermite}, there exists a constant $c>0$, such that 
    \begin{equation*}
        \E[H_{\ell-1}(G_1)^4]^{1/4} \le c (\ell-1)^{(\ell-1)/2}e^{(\ell-1)(\log(3)/2-1/2)}(\ell-1)^{1/4}
        \le c \ell^{(\ell-1)/2}e^{(\ell-1)(\log(3)/2-1/2)}\ell^{1/4}, 
    \end{equation*}
    for all $\ell \ge 2$. Applying this bound 
    for $\wt C_{q,\ell}$, yields  
    \begin{equation}\label{eq:C_l_q_bound}
        \wt C_{q,\ell} \le c^2 \ell^{(\ell-1)/2}e^{(\ell-1)(\log(3)/2-1/2)}\ell^{1/4} q^{(q-1)/2}e^{(q-1)(\log(3)/2-1/2)}q^{1/4},
        \quad \text{ for all }\ell,q \ge 2.
    \end{equation} 
    Inserting~\eqref{eq:C_l_q_bound} and the specific choice of $M_{\ell+q-2}$, implies
    \begin{align}\label{eq:rate_bound_general}
    \begin{aligned}
    \E\big[\max_{1 \le i,j \le d}|\Delta_n(i,j)|\big]&\le 
         c^2 K_n\sum_{\ell,q=2}^\infty \Bigg\{\max_{1 \le i,j \le d}|a_{i,\ell} a_{j,q}|\left(\frac{4e\log(2d^2-1+e^{(\ell+q)/2})}{\ell+q-2}\right)^{(\ell+q-2)/2} \\
          &\qquad \times \ell^{(\ell-1)/2}e^{(\ell-1)(\log(3)/2-1/2)}\ell^{1/4}q^{(q-1)/2}e^{(q-1)(\log(3)/2-1/2)}q^{5/4}\Bigg\},
         \end{aligned}
    \end{align} for all $n,d \in \N$. Applying Lemma~\ref{lem:Theta_d_growth}(i) if $\beta=1/2$ and Lemma~\ref{lem:Theta_d_growth}(ii) if $\beta\in (1/2,1]$, with $f_q=\max_{1 \le i \le d}|a_{i,q}|$ together with the assumption on $a_{i,q}$, we can conclude from~\eqref{eq:bound_d_ge_3} that there exist constants $C_{\bm{\theta}}, K_{\bm{\theta}} >0$, only dependent on $\bm{\theta}$, such that
    \begin{align}\label{eq:bound_before_corre_1}
    \begin{aligned}
        d_\HR(\bm{S}_n,\bm{Z}_n)&\le 
    C_{\bm{\theta}} \log_+(d)\frac{\wh\Delta(\bm{S}_n,\bm{\Sigma}_n)}{\sigma_*^2}
    \log_+\left(\frac{\underline{\sigma}\wh\Delta(\bm{S}_n,\bm{\Sigma}_n)}{\overline{\sigma} \sigma_{*}^2}\right), \,  \text{ for all }n \in \N,\, d\ge 3 \text{ where}\\
    \wh\Delta(\bm{S}_n,\bm{\Sigma}_n)&= K_{\bm{\theta}} \frac{\|\rho_n\|_{\ell^1(\Z)}^{3/2}}{\sqrt{n}}\frac{e^{r\log_+^{1/(2\beta)}(d)}}{\log_+(d)}+\max_{1 \le i,j \le d}\left|(\bm{\Sigma}_n)_{i,j}-\E\left[S_{n,i}S_{n,j}\right]\right|.
    \end{aligned}
    \end{align} A similar bound holds for $d=1$ and $d=2$ by~\eqref{eq:bound_d_=1,=2}: 
    \begin{equation}\label{eq:bound_before_corre_2}
        d_\HR(\bm{S}_n,\bm{Z}_n)\le 
    C_{\bm{\theta}} \log_+(d)\frac{\wh\Delta(\bm{S}_n,\bm{\Sigma}_n)}{\sigma_*^2\wedge 1}
    \log_+\left(\frac{(\underline{\sigma}\wedge 1) \wh\Delta(\bm{S}_n,\bm{\Sigma}_n)}{(\overline{\sigma}\vee 1) (\sigma_{*}^2\wedge 1)}\right), \,  \text{ for all }n \in \N.
    \end{equation}

    Next, we note that the distance $d_\HR$ is invariant under multiplication with a diagonal matrix where the values are strictly positive. This means that $d_\HR(\bm{S}_n,\bm{Z}_n) =d_\HR(\bm{D} \bm{S}_n,\bm{D}\bm{Z}_n)$ where $\bm{D}=\Diag(p_1\ld p_d)$ and $p_i>0$ for $i=1\ld d$. Since taking infimum preserves soft inequalities, showing $d_\HR(\bm{S}_n,\bm{Z}_n) \le f(\bm{S}_n,\bm{Z}_n)$, implies for all $n,d \in \N$ that 
    \begin{equation}\label{eq:inf_d_HR_distance}
        d_\HR(\bm{S}_n,\bm{Z}_n)=\inf_{\bm{D} \in \mathcal{D}}d_\HR(\bm{D} \bm{S}_n,\bm{D}\bm{Z}_n) \le \inf_{\bm{D} \in \mathcal{D}}f(\bm{D}\bm{S}_n,\bm{D}\bm{Z}_n),
    \end{equation}
    where $\mathcal{D}\coloneqq \{\text{Diagonal matrices }\bm{D}\in \R^{d \times d}:\bm{D}_{i,i}>0 \text{ for }i=1\ld d\}$. We now choose $\bm{D}\in \mathcal{D}$ such that $\bm{D}^{1/2}\bm{\Sigma}_n\bm{D}^{1/2}=\bm{\Lambda}_n$. Then, $\underline{\sigma}(\bm{\Lambda}_n), \overline{\sigma}(\bm{\Lambda}_n)=1$ and $\bm{\Lambda}_n$ has an eigenvalue less than or equal to one, since $\bm{\Lambda}_n$ is a correlation matrix. Hence, by~\eqref{eq:inf_d_HR_distance},
    \begin{equation*}
        d_\HR(\bm{S}_n,\bm{Z}_n)\le 
    C \log_+(d)\frac{\wh\Delta(\bm{DS}_n,\bm{\Lambda}_n)}{\sigma_*^2(\bm{\Lambda}_n)}
    \log_+\left(\frac{\wh\Delta(\bm{DS}_n,\bm{\Lambda}_n)}{\sigma_{*}^2(\bm{\Lambda}_n)}\right), \quad \text{ for all }n,d \in \N.
    \end{equation*} Using the bound $x+y \le 2xy$ for $x,y\ge 1$, it follows that $\log_+(ab) \le 2\log_+(a) \log_+(b)$. Using this together with the fact that we can find some large enough $\wt K>1$, such that $\wh \Delta(\bm{S}_n,\bm{\Sigma}_n)\le  \wt K \Delta(\bm{S}_n,\bm{\Sigma}_n)$ (where $\Delta(\bm{S}_n,\bm{\Sigma}_n)$ is as defined in the statement of Theorem~\ref{thm:main_mult_clt_techncial_thm}), we can conclude the proof.
\end{proof}

The proof of Theorem~\ref{thm:Ext_to_convex_dist}, is mainly based on~\cite[Thm~1.2]{MR4488569} and the ideas from the proof of Theorem~\ref{thm:main_mult_clt_techncial_thm}. 
\begin{proof}[Proof of Theorem~\ref{thm:Ext_to_convex_dist}]
    By~\cite[Thm~1.2]{MR4488569}, it follows for all $d,n\ge 1$, that 
    \begin{equation*}
        d_\CD(\bm{S}_n,\bm{Z}_n) \le 402 d^{41/24} \sqrt{\E[\|\bm{M}_{\bm{S}_n}-\bm{\Sigma}_n\|_{\mathrm{H.S.}}^2]}(\|\bm{\Sigma}_n^{-1}\|_{\mathrm{op}}^{3/2}+1),
    \end{equation*} where $(\bm{M}_{\bm{S}_n}(i,j))_{1 \le i,j \le d} = (\langle -DL^{-1} S_{n,i}, D S_{n,j} \rangle_{\mH})_{1 \le i,j \le d}$. Hence, using the sub-additivity of $x \mapsto \sqrt{x}$ and of $\|\cdot\|_{\mathrm{H.S.}}$, it follows that
    \begin{equation*}
        d_\CD(\bm{S}_n,\bm{Z}_n) \le 402 d^{41/24} \left(\sqrt{\E[\|\bm{M}_{\bm{S}_n}-\cov(\bm{S}_n)\|_{\mathrm{H.S.}}^2]} + \|\cov(\bm{S}_n)-\bm{\Sigma}_n\|_{\mathrm{H.S.}}\right)(\|\bm{\Sigma}_n^{-1}\|_{\mathrm{op}}^{3/2}+1).
    \end{equation*} For the remainder of the proof, we will focus on bounding $\sqrt{\E[\|\bm{M}_{\bm{S}_n}-\cov(\bm{S}_n)\|_{\mathrm{H.S.}}^2]} $ for all $n,d \ge 1$. Note, by definition of $\bm{M}_{\bm{S}_n}$ together with~\eqref{eq:f_n,i_kernels}, that
    \begin{equation}\label{m_s_n_equality_1}
        \E[\|\bm{M}_{\bm{S}_n}-\cov(\bm{S}_n)\|_{\mathrm{H.S.}}^2]
        = \sum_{1 \le i,j \le d } \E\bigg[ \bigg(\sum_{q,\ell \ge 2} \delta(i,j,q,\ell,n) \bigg)^2 \bigg] \le d^2 \!\!\max_{1 \le i,j \le d} \bigg\|\sum_{q,\ell \ge 2} \delta(i,j,q,\ell,n)\bigg\|_2^2,
    \end{equation} where where $f_n(i,\ell)=a_{i,\ell} n^{-1/2} \sum_{k=1}^n e_k^{\otimes \ell} \in \mH^{\odot \ell} $ for all $n \in \N$ and $i \in \{1\ld d\}$, and $\delta(i,j,q,\ell,n) \coloneqq \ell^{-1}(\langle D I_\ell(f_n(i,\ell)),DI_q(f_n(j,q))\rangle_\mH-\E[\langle D I_\ell(f_n(i,\ell)),DI_q(f_n(j,q))\rangle_\mH])$, exactly as in~\eqref{eq_defn_delta_i,j,q,l}. Applying the triangle inequality in $L^2$
together with~\eqref{m_s_n_equality_1}, implies that
    \begin{align*}
        \sqrt{\E[\|\bm{M}_{\bm{S}_n}-\cov(\bm{S}_n)\|_{\mathrm{H.S.}}^2]} 
        \le d \sum_{q, \ell \ge 2} \max_{1 \le i,j \le d}\|\delta(i,j,q,\ell,n)\|_2 ,
    \end{align*} where we show below that the upper bound is finite. We can now use specific inequalities from the proof of Theorem~\ref{thm:main_mult_clt_techncial_thm}, namely~\eqref{eq:proof_bound_indep_d},~\eqref{eq:ineq_Youngs_applic} and~\eqref{eq:C_l_q_bound}, which implies that 
    \begin{multline*}
        \sqrt{\E[\|\bm{M}_{\bm{S}_n}-\cov(\bm{S}_n)\|_{\mathrm{H.S.}}^2]} 
        \le
        2d \frac{\|\rho_n\|_{\ell^1(\Z)}^{3/2}}{\sqrt{n}} \sum_{q,\ell \ge 2} q \max_{1 \le i,j \le d}|a_{i,\ell}a_{j,q}|\wt{C}_{q,\ell}\\
        \le 2d c^2 \frac{\|\rho_n\|_{\ell^1(\Z)}^{3/2}}{\sqrt{n}} \sum_{q,\ell \ge 2} \max_{1 \le i,j \le d} |a_{i,\ell}a_{j,q}|\ell^{(\ell-1)/2}e^{(\ell-1)(\log(3)/2-1/2)}\ell^{1/4} q^{(q-1)/2}e^{(q-1)(\log(3)/2-1/2)}q^{5/4},
    \end{multline*} where $\wt{C}_{q,\ell} = \E[H_{\ell-1}(G_1)^4]^{1/4}\E[H_{q-1}(G_1)^4]^{1/4}$. Applying assumption~\eqref{eq:main_assump_a_i,q}, implies the existence of some uniform constant $\wh C$, only dependent on $\bm{\theta}$ and $c$ from~\eqref{eq:C_l_q_bound}, such that
    \begin{align}
        &\sqrt{\E[\|\bm{M}_{\bm{S}_n}-\cov(\bm{S}_n)\|_{\mathrm{H.S.}}^2]}\le \wh C d \frac{\|\rho_n\|_{\ell^1(\Z)}^{3/2}}{\sqrt{n}} \Theta \nonumber \\
        &\qquad \coloneqq \wh C d \frac{\|\rho_n\|_{\ell^1(\Z)}^{3/2}}{\sqrt{n}} \sum_{q,\ell \ge 2} \frac{e^{\kappa (q+\ell)}}{(q!\ell!)^\beta} \ell^{(\ell-1)/2}e^{(\ell-1)(\log(3)/2-1/2)}\ell^{1/4} q^{(q-1)/2}e^{(q-1)(\log(3)/2-1/2)}q^{5/4}.\label{eq:finite_herm_exp_proof_d_C}
    \end{align} Hence, since the sum is independent of $d$, it only remains to show that the sum is finite. Let $w=\log(3)/2-1/2$, and note the multiplicative structure, given by
    \begin{align*}
        \Theta \coloneqq \left(\sum_{q = 2}^\infty \frac{e^{\kappa q}}{(q!)^\beta} q^{(q-1)/2}e^{w(q-1)}q^{5/4}  \right) \left(\sum_{\ell=2}^\infty \frac{e^{\kappa \ell}}{(\ell!)^\beta} \ell^{(\ell-1)/2}e^{w(\ell-1)}\ell^{1/4} \right).
    \end{align*} Due to almost matching sums (up to a polynomial factor), it suffices to show that the slowest converging sum is finite, i.e.\ $\wt{\Theta} \coloneqq \sum_{q = 2}^\infty e^{\kappa q}(q!)^{-\beta} q^{(q-1)/2}e^{w(q-1)}q^{5/4} <\infty$, to show that $\Theta<\infty$. Applying Stirling's inequality~\eqref{eq:Stirling's_formula}, we see that
    \begin{equation}\label{eq:bound_M_s_n_stirling}
        \wt{\Theta} \le e^{-w}(2\pi)^{-\beta/2}\sum_{q=2}^\infty e^{(\kappa+\beta+w) q} q^{(1/2-\beta) q}  q^{3/4-\beta/2}.
    \end{equation}
    
    Next, we consider the cases $\beta=1/2$ and $\beta>1/2$ separately. Assume that $\beta=1/2$, then~\eqref{eq:bound_M_s_n_stirling}, implies that $\wt{\Theta} \le e^{-w}(2\pi)^{-\beta/2}\sum_{q=2}^\infty e^{(\kappa+\beta+w) q} q^{1/2}$, where we note that $\sum_{q \ge 2} q^{1/2}e^{y q}<\infty$ if and only if $y<0$. Hence, it follows that $\wt\Theta$ (and hence $\Theta$) is finite if $\kappa<-1/2-w=-\log(3)/2$. 
    
    Assume instead that $\beta>1/2$. Then,~\eqref{eq:bound_M_s_n_stirling}, implies that
    \begin{align*}
         e^w (2\pi)^{\beta/2} \wt{\Theta} \le \sum_{q=2}^\infty e^{(\kappa+\beta+w) q} q^{(1/2-\beta) q}  q^{3/4-\beta/2}
         =
         \sum_{q=2}^\infty e^{(\kappa+\beta+w) q-(\beta-1/2)q\log(q)}  q^{3/4-\beta/2}.
    \end{align*} Recall that $\kappa \in \R$ is fixed and $\beta \in (1/2,1]$, thus there exists a $q_0$ which depends on $\kappa$ and $\beta$ and a constant $c_1>0$ which only depends on $\beta$, such that $(\kappa+\beta+w) q-(\beta-1/2)q\log(q) \le -c_1 q \log(q)$ for all $q \ge q_0$. To see why this holds, consider two cases: 1) if $\kappa+\beta+w<0$, the statement obviously holds, by choosing $q_0=2$ and $c_1=(\beta-1/2)>0$, 2) if $\kappa+\beta+w>0$, choose some $0<\nu < (\beta-1/2)$ and a $q_0\ge 2$, for which it holds that $(\kappa+\beta+w)q \le \nu q \log(q)$ for all $q \ge q_0$, concluding the statement with $c_1=(\beta-1/2)-\nu>0$. Thus, in all cases, it follows that 
    \begin{equation*}
         e^w (2\pi)^{\beta/2} \wt{\Theta} \le 
         \sum_{q=2}^{q_0} e^{(\kappa+\beta+w) q-(\beta-1/2)q\log(q)}  q^{3/4-\beta/2}
         + \sum_{q=q_0+1}^\infty e^{-c_1 q \log(q)}  q^{3/4-\beta/2}.
    \end{equation*} Since $(e^{-c_1 q \log(q)}  q^{3/4-\beta/2})_{q \ge 2}$ is summable (use e.g. the ratio-test), we can conclude that $\wt\Theta$ (and hence $\Theta$) is finite for all $\kappa \in \R$.

    Since $\bm{\Sigma}_n$ is invertible and a covariance matrix, it follows that $\|\bm{\Sigma}_n^{-1}\|_{\mathrm{op}}=1/\sigma_*^2(\bm{\Sigma}_n)$. Thus, we have so far that there exists a $C_{\bm{\theta}} >0$, depending only on $\bm{\theta}$, such that 
    \begin{equation}\label{eq:d_c_before_correlation}
        d_\CD(\bm{S}_n,\bm{Z}_n) \le C_{\bm{\theta}} d^{41/24} \Big(d \, n^{-1/2} \|\rho_n\|_{\ell^1(\Z)}^{3/2} + \|\cov(\bm{S}_n)-\bm{\Sigma}_n\|_{\mathrm{H.S.}}\Big)\big(\sigma_*^2(\bm{\Sigma}_n)^{-3/2}+1\big).
    \end{equation} Note that $d_\CD$ is invariant under multiplication with an invertible matrix (specifically a diagonal matrix where the values are strictly positive). This means that $d_\CD(\bm{S}_n,\bm{Z}_n) =d_\CD(\bm{D} \bm{S}_n,\bm{D}\bm{Z}_n)$ where $\bm{D}=\Diag(p_1\ld p_d)$ and $p_i>0$ for $i=1\ld d$. Hence, as in~\eqref{eq:inf_d_HR_distance}, we get that, showing $d_\CD(\bm{S}_n,\bm{Z}_n) \le f(\bm{S}_n,\bm{Z}_n)$, implies for all $n,d \in \N$ that $d_\HR(\bm{S}_n,\bm{Z}_n) \le \inf_{\bm{D} \in \mathcal{D}}f(\bm{D}\bm{S}_n,\bm{D}\bm{Z}_n)$,
    where $\mathcal{D}\coloneqq \{\text{Diagonal matrices }\bm{D}\in \R^{d \times d}:\bm{D}_{i,i}>0 \text{ for }i=1\ld d\}$. Choose $\bm{D}\in \mathcal{D}$ such that $\bm{D}^{1/2}\bm{\Sigma}_n\bm{D}^{1/2}=\bm{\Lambda}_n$, where $\bm{\Lambda}_n$ is the correlation matrix associated with the covariance matrix $\bm{\Sigma}_n$. This then implies that there exists a $C_{\bm{\theta}} >0$, depending only on $\bm{\theta}$, such that 
    \begin{equation}\label{eq:d_c_after_correlation}
        d_\CD(\bm{S}_n,\bm{Z}_n) \le C_{\bm{\theta}} d^{41/24} \left(d \frac{ \|\rho_n\|_{\ell^1(\Z)}^{3/2}}{\sqrt{n}} + \sqrt{\sum_{1 \le i,j \le d} \!\! \! \left(\frac{\E[S_{n,i}S_{n,j}]}{\sqrt{(\bm{\Sigma}_n)_{i,i}(\bm{\Sigma}_n)_{j,j}}}-(\bm{\Lambda}_n)_{i,j} \right)^2}\right)\big(\sigma_*^2(\bm{\Lambda}_n)^{-3/2}+1\big),
    \end{equation} for all $n,d \in \N$. Since $\bm{\Lambda}_n$ is a correlation matrix, it follows that $\sigma_*^2(\bm{\Lambda}_n) \le 1$, and hence $\sigma_*^2(\bm{\Lambda}_n)^{-3/2}+1 \le 2\sigma_*^2(\bm{\Lambda}_n)^{-3/2}$, concluding the proof. 
\end{proof}

\subsubsection{Proofs of Technical Lemmas}
The main estimate used in the proofs is Stirling's inequality~\cite[Eqs.~(1) \&~(2)]{MR69328}, given by
\begin{equation}\label{eq:Stirling's_formula}
    \sqrt{2\pi n}\left( \frac{n}{e}\right)^n 
    < n! <
    e^{\frac{1}{12}}\sqrt{2\pi n}\left( \frac{n}{e}\right)^n, \quad \text{ for all }n \ge 1.
\end{equation} Note that a similar asymptotic behaviour holds for the Gamma function. Indeed, by~\cite[Eq.~(SL)]{MR3026014}, it follows that $\Gamma(1+x) \sim \sqrt{2\pi x}\left( x/e\right)^x$ as $x \to \infty$. 

\begin{proof}[Proof of Lemma~\ref{lem:mom_bound_Hermite}]
By~\cite[Prop.~2.2.1 \&~Cor.~2.8.14]{MR2962301}, it follows that $$\E[H_\ell(G_1)^4]^{1/4} \le 3^{\ell/2}\E[H_\ell(G_1)^2]^{1/2} = 3^{\ell/2} \sqrt{\ell!}, \quad \text{ for all }\ell\ge 2.$$ Hence, applying Stirling's inequality~\eqref{eq:Stirling's_formula}, it follows that
\begin{equation*}
    \E[H_\ell(G_1)^4]^{1/4} \le 3^{\ell/2} (2\pi \ell)^{1/4} (\ell/e)^{\ell/2} = (2\pi)^{1/4} \ell^{1/4} e^{\ell(\log(3)/2-1/2)} \ell^{\ell/2}, \quad \text{ for all }\ell \ge 2.\qedhere
\end{equation*}
\end{proof}

\begin{proof}[Proof of Lemma~\ref{lem:Theta_d_growth}]
We start by proving general estimates, followed by separate proofs of cases (i) and (ii). For all $\ell,q \ge 2$, we have $(\ell+q-2)^{(\ell+q-2)/2}\ge \ell^{(\ell-1)/2} q^{(q-1)/2}$, hence
    \begin{equation*}
         \Theta(d) \le \sum_{\ell=2}^\infty   \left(2e^{\log(3)/2}\right)^{\ell-1}\ell^{1/4}f_{\ell}  \sum_{q=2}^\infty  \log^{(\ell+q-2)/2}(2d^2-1+e^{(\ell+q)/2})\left(2e^{\log(3)/2}\right)^{q-1}q^{5/4}f_q.
    \end{equation*}
 Next, define the sub-additive function $g(x)=\log(x+1)$, and note that
    \begin{equation*}
        \log(2d^2-1+e^{(\ell+q)/2})=g(2d^2-2+e^{(\ell+q)/2})
        \le\log(2d^2)+\frac{\ell+q}{2},
    \end{equation*} by using the sub-additivity $g(x+y)\le g(x)+g(y)$ and choosing $x=2d^2-1$ and $y=e^{(\ell+q)/2}-1$. Thus, applying the inequality $(x+y)^p \le 2^{p}(x^p+y^p)$ for $x,y \in [0,\infty)$ and $p\ge 1$, it follows that
    \begin{equation}\label{eq:log_ineq}
        \log^{(\ell+q-2)/2}(2d^2-1+e^{(\ell+q)/2})\le 2^{(\ell+q-2)/2}\left(\log^{(\ell+q-2)/2}(2d^2)+\left(\frac{\ell+q}{2}\right)^{(\ell+q-2)/2}\right).
    \end{equation} 
    For all $q \ge 2$, define 
    \begin{equation}\label{eq:defn_c_q}
        \upsilon_q \coloneqq \left(2^{3/2}e^{\log(3)/2}\right)^{q-1}q^{5/4}f_q.
    \end{equation}
    Then, using~\eqref{eq:log_ineq}, it follows that
    \begin{equation} \label{eq:up_bound_theta_d}
    \Theta(d) 
    \le 
    \sum_{\ell =2}^\infty \frac{\upsilon_\ell }{\ell} \sum_{q=2}^\infty \upsilon_q\log^{(\ell+q-2)/2}(2d^2)
        + \sum_{\ell =2}^\infty  \frac{\upsilon_\ell }{\ell} \sum_{q=2}^\infty \upsilon_q\left(\frac{\ell+q}{2}\right)^{(\ell+q-2)/2} \eqqcolon \Theta_1(d)+\Theta_2. 
    \end{equation} Note that only $\Theta_1$ depends on $d$, hence $\Theta_1(d)$ determines the dimensional dependence of $\Theta(d)$. Let $u(d)\coloneqq \log^{1/2}(2d^2)$ for $d \ge 2$, and note that 
    \begin{equation}\label{eq:Theta_1(d)_bound}
        \Theta_1(d) 
        \le 
        \bigg( u(d)^{-1}\sum_{q=2}^\infty u(d)^{q}\upsilon_q\bigg)^2.
    \end{equation}

Part (i). Fix $\alpha \in (0,\sqrt{W(e^{-1 + 1/(2 e)})/e^{1/(2e)}})$, and recall from the assumptions that there exists a $c>0$ such that $f_q \le c\alpha^qe^{-(\log(24)/2+5/(4e))q}/\sqrt{q!}$ for all $q \ge 2$. By definition of $\upsilon_q$ from~\eqref{eq:defn_c_q}, it holds that there exists a universal constant $\wt c>0$, such that
    \begin{equation}\label{eq:c_q_bound}
        \upsilon_q \le\frac{\wt c \alpha^q\left(2\sqrt{2}e^{\log(3)/2}\right)^{q}q^{5/4}}{e^{(\log(24)/2+5/(4e))q}\sqrt{q!}}= \wt c \frac{ \alpha^q (e^{\log(24)/2+5\log(q)/(4q)})^q}{e^{(\log(24)/2+5/(4e))q}\sqrt{q!}}
        \le 
        \frac{\wt c \alpha^q }{\sqrt{q!}} , \quad \text{ for all }q\ge 2.
    \end{equation}
    To show~\eqref{eq:c_q_bound}, it was also used that $\log(3)/2+3\log(2)/2=\log(24)/2$, and that $q \mapsto \log(24)/2+5\log(q)/(4q)$ attains its maximal value at $q=e$ with maximal value attained being $\log(24)/2+5/(4e)$. This holds, since $\tfrac{\D }{\D q} (\log(24)/2+5\log(q)/(4q))=5(1-\log(q))/(4 q^2)=0$ has one solution at $q=e$ and since the derivative is positive for $q<e$ and negative for $q>e$. 
    Using~\eqref{eq:Theta_1(d)_bound} and~\eqref{eq:c_q_bound}, it follows that
    \begin{equation*}
        \Theta_1(d) \le \frac{\wt c^2}{u(d)^2}\left(\sum_{q=2}^\infty \frac{\left(u(d)\alpha\right)^{q}}{\sqrt{q!}} \right)^2 
        \le \frac{\wt c^2}{u(d)^2}\left(\sum_{q =0}^\infty  \frac{\left(u(d)\alpha\right)^{q}}{\sqrt{q!}} \right)^2. 
    \end{equation*} Next, let $\wt u(d)\coloneqq (u(d)\alpha)^2$, and use Stirling's inequality~\eqref{eq:Stirling's_formula} for $q!$ and Stirling's inequality for the gamma function $\Gamma(1+q/2)$, it follows that there exists a uniform constant $C>0$, such that
    \begin{align}
    \begin{aligned}\label{eq:sum_sqrt(q!)}
        \sum_{q =0}^\infty  \frac{\wt u(d)^{q/2}}{\sqrt{q!}} 
        &\le 
        \left( \frac{\pi e^{1/3}}{2}\right)^{1/4}\sum_{q=0}^\infty \frac{q^{1/4}(\wt u(d)/2)^{q/2}}{\sqrt{2\pi(q/2)}(q/(2e))^{q/2}e^{1/12}}
        \le
        C\sum_{q=0}^\infty \frac{(e^{1/(2e)}\wt u(d)/2)^{q/2}}{\Gamma(1+q/2)} \\
        &=
        Ce^{e^{1/(2e)}\wt u(d)/2}\left(\erf\left(e^{\tfrac{1}{4e}}\frac{\sqrt{\wt u(d)}}{\sqrt{2}}\right)+1\right)
        \le 
        2C(2d^2)^{e^{1/(2e)}\alpha^2/2}.
    \end{aligned}
    \end{align} In the second inequality of~\eqref{eq:sum_sqrt(q!)}, that $q^{1/4}=(e^{\log(q)/(2q)})^{q/2}$, and that $q \mapsto \log(q)/(2q)$ is maximised at $q=e$ with value $1/(2e)$. In third inequality of~\eqref{eq:sum_sqrt(q!)}, we used properties of the Mittag--Leffler function, yielding that $
    \sum_{q=0}^\infty x^{q/2}/\Gamma(1+q/2)=e^x(\erf(\sqrt{x})+1)$, where $\erf$ is the error function $\erf(x) \coloneqq \tfrac{2}{\sqrt{\pi}}\int_0^x e^{-t^2}\D t$.
    Since $\log(2d^2) \ge \log(2)\log_+(d)$ for all $d \ge 1$, we can conclude that there exists a $K_{\bm{\theta}}$, only dependent on $\bm{\theta}$, such that $\Theta_1(d) \le K_{\bm{\theta}}d^{r}/\log_+(d) $ for all $d \ge 1$.

    It remains to check if $\Theta_2<\infty$. By assumption on $f_q$ together with~\eqref{eq:c_q_bound}, we have $\Theta_2<\infty$ if  
    \begin{equation*}
        K \coloneqq \sum_{\ell =2}^\infty  \frac{\alpha^\ell}{\sqrt{\ell!}\ell } \sum_{q=2}^\infty \frac{\alpha^q}{\sqrt{q!}} \left(\frac{\ell+q}{2}\right)^{(\ell+q)/2}<\infty.
    \end{equation*} Applying $(x+y)^p \le 2^{p}(x^p+y^p)$ for $x,y \ge 0$ and $p \ge 1$ and $1/\ell \le 1$ for all $\ell \ge 2$, it follows that
    \begin{align}\label{eq:inequalities_K_bound}
        K 
        &\le
        \sum_{\ell=2}^\infty  \frac{\alpha^\ell}{\sqrt{\ell!}\ell } \sum_{q=2}^\infty \frac{\alpha^q}{\sqrt{q!}} \ell^{(\ell+q)/2}+ \sum_{\ell =2}^\infty  \frac{\alpha^\ell}{\sqrt{\ell!}\ell } \sum_{q=2}^\infty \frac{\alpha^q}{\sqrt{q!}}q^{(\ell+q)/2}
         \le 2 \sum_{\ell =2}^\infty  \frac{\alpha^\ell \ell^{\ell/2}}{\sqrt{\ell!} } \sum_{q=2}^\infty \frac{\alpha^q \ell^{q/2}}{\sqrt{q!}}.
         \end{align} Using~\eqref{eq:sum_sqrt(q!)}, there exists a uniform constant $C$, such that
         $$\sum_{q =0}^\infty \frac{\alpha^q \ell^{q/2}}{\sqrt{q!}}=\sum_{q =0}^\infty \frac{(\ell \alpha^2)^{q/2}}{\sqrt{q!}}\le 
         C e^{\ell e^{1/(2e)}\alpha^2/2}.$$ Together with the inequality $(q/e)^q \le q!$ and~\eqref{eq:inequalities_K_bound}, this implies that
        \begin{align}\label{eq_Up_bound_K_1}
        K\le 
        2C \sum_{\ell =2}^\infty  \frac{ \ell^{\ell/2}}{\sqrt{\ell!}}\left(e^{ e^{1/(2e)}\alpha^2/2}\alpha\right)^\ell
        \le 2 C \sum_{\ell =2}^\infty \left(e^{ e^{1/(2e)}\alpha^2/2+1/2}\alpha\right)^\ell.
    \end{align} To see that the upper bound in~\eqref{eq_Up_bound_K_1} is finite, note that $\sum_{k \ge 2} \beta^k<\infty$ if and only if $|\beta|<1$, and that $\beta=e^{ e^{1/(2e)} \alpha^2/2+1/2}\alpha<1$ for all $\alpha<\sqrt{W(e^{-1 + 1/(2 e)})/e^{1/(2e)}}$. Since this is the case for our choice of $\alpha$, we can conclude that $\Theta_2<\infty$.
        
    Part (ii). Recall by assumption that $f_q \le ce^{\kappa q}/(q!)^\beta$ for all $q \ge 2$ and let $\varpi\coloneqq e^{\kappa+\log(24)/2+5/(4e)}$. Hence, by~\eqref{eq:c_q_bound}, there exists a universal constant $\wt c>0$, such that
    \begin{align}\label{eq:c_q_inequality_2}
        \upsilon_q \le
        \wt c \frac{  e^{\kappa q}e^{\log(24)q/2+5\log(q)/4}}{(q!)^\beta}\le \frac{\wt c  e^{\kappa q}(e^{\log(24)/2+5\log(q)/(4q)})^q}{(q!)^\beta}
        \le 
        \frac{\wt c \varpi 
        ^q }{(q!)^\beta} , \quad \text{ for all }q \ge 2.
    \end{align} By~\eqref{eq:Theta_1(d)_bound} and~\eqref{eq:c_q_inequality_2}, it follows that
    \begin{equation*}
        \Theta_1(d) \le \frac{\wt c^2}{u(d)^2}\left(\sum_{q=2}^\infty \frac{\left(u(d)\varpi 
        \right)^{q}}{(q!)^\beta} \right)^2 
        \le \frac{\wt c^2}{u(d)^2}\left(\sum_{q =0}^\infty  \frac{\left(u(d)\varpi 
        \right)^{q}}{(q!)^\beta} \right)^2. 
    \end{equation*} Next, let $\wt u(d)\coloneqq (u(d)\varpi)^{1/\beta}$. The inequality $(\sum_{i=0}^\infty |x_i|^b)^{1/b} \le (\sum_{i=0}^\infty |x_i|^a)^{1/a}$ for $0<a \le b$, with $b=2\beta \in (1,2]$ and $a=1$, yields that
    \begin{equation}\label{eq:sum_(q!)^beta}
        \sum_{q =0}^\infty  \left( \frac{\wt u(d)^{ q}}{q!}\right)^\beta =  \left(\sum_{q =0}^\infty  \left( \sqrt{\frac{\wt u(d)^{ q}}{q!}}\right)^{2\beta}\right)^{\frac{2\beta}{2\beta}} \le \left(\sum_{q =0}^\infty  \sqrt{\frac{\wt u(d)^{ q}}{q!}}\right)^{2\beta}.
    \end{equation} Note that $\log(2d^2) \ge \log(2)\log_+(d)$ and $\log^{1/(2\beta)}(2d^2) \le \log(2)^{1/(2\beta)}+2^{1/(2\beta)}\log_+(d)^{1/(2\beta)}$ for all $d \ge 1$. Hence, applying~\eqref{eq:sum_sqrt(q!)} and~\eqref{eq:sum_(q!)^beta}, it follows that there exists a $K_{\bm{\theta}}$, only dependent on $\bm{\theta}$, such that 
    \begin{equation*}
     \Theta_1(d) 
     \le
     \frac{\wt c^2 2^{4\beta}\exp\left\{2e^{1/(2e)}\beta \varpi^{1/\beta}
     \log^{1/(2\beta)}(2d^2)\right\}}{\log(2d^2)}
     \le 
     K_{\bm{\theta}} \frac{e^{r\log_+^{1/(2\beta)}(d)}}{\log_+(d)}, \, \text{ for all }d \ge 1.
    \end{equation*}

    Finally, we need to check that $\Theta_2<\infty$. By the assumption on $f_q$ together with~\eqref{eq:c_q_inequality_2}, it suffices to show that
    \begin{equation*}
        K \coloneqq \sum_{\ell =2}^\infty  \frac{\varpi^\ell}{(\ell!)^\beta\ell } \sum_{q=2}^\infty \frac{\varpi^q}{(q!)^\beta} \left(\frac{\ell+q}{2}\right)^{(\ell+q)/2}<\infty.
    \end{equation*} Note that $(x+y)^p \le 2^{p}(x^p+y^p)$ for $x,y \ge 0$ and $p \ge 1$ and $1/\ell \le 1$ for all $\ell \ge 2$. Hence, by~\eqref{eq:inequalities_K_bound}, it follows that $K \le 
        2 \sum_{\ell =2}^\infty  \varpi^\ell \ell^{\ell/2}(\ell!)^{-\beta } \sum_{q =2}^\infty \varpi^q \ell^{q/2}(q!)^{-\beta}$. Applying~\eqref{eq:sum_sqrt(q!)} and~\eqref{eq:sum_(q!)^beta}, yields that
         $$\sum_{q =0}^\infty \frac{\varpi^q \ell^{q/2}}{(q!)^\beta}=\sum_{q =0}^\infty \left(\frac{\varpi^{q/(2\beta)}\ell^{q/(4\beta)}}{\sqrt{q!}}\right)^{2\beta}\le 
         2e^{e\beta\varpi^{1/\beta}\ell^{1/(2\beta)}},$$ which, together with the inequality $q^q \le q!e^q$, implies that
        \begin{equation*}
        K\le 4 \sum_{\ell =2}^\infty  \frac{ \varpi^\ell \ell^{\ell/2}}{(\ell!)^\beta} e^{e\beta\varpi^{1/\beta}\ell^{1/(2\beta)}}
        \le 
        4\sum_{\ell =2}^\infty \frac{\varpi^\ell e^{\ell/2}}{(\ell!)^{\beta-1/2}}e^{e\beta\varpi^{1/\beta}\ell^{1/(2\beta)}}
        \eqqcolon 4\sum_{\ell =2}^\infty \theta_\ell.
    \end{equation*} We now have that
    \begin{equation*}
        \frac{\theta_{\ell+1}}{\theta_\ell} =\frac{\varpi e^{1/2}\exp\left\{e\beta\varpi^{1/\beta}((\ell+1)^{1/(2\beta)}-\ell^{1/(2\beta)})\right\}}{\ell^{\beta-1/2}} \to 0, \quad \text{ as } \ell \to \infty,
    \end{equation*} where we used that $\beta\in (1/2,1]$ (i.e.\ $1/(2\beta) \in [1/2,1)$), $e^{a((\ell+1)^b-\ell^b)} \to 1 $ as $\ell \to \infty$ for all $a\in \R$ and $b \in (0,1)$. The proof can now be concluded by applying the ratio test to see that $\sum_{\ell =2}^\infty \theta_\ell<\infty$, and hence $\Theta_2< \infty$.
\end{proof}

\section{Proofs of the Results from Section~\ref{sec:main_results}}\label{sec:proofs_sec_1}

\begin{proof}[Proof of Theorem~\ref{thm:main_theorem}]
\textbf{Proof of~\eqref{eq:main_result_inequality}.} Under the assumptions of Theorem~\ref{thm:main_theorem} we are in the setting of Theorem~\ref{thm:main_mult_clt_techncial_thm} with $\bm{\Sigma}_n=\cov(\bm{S}_n)$. Hence, by Theorem~\ref{thm:main_mult_clt_techncial_thm} there exists a $\wt{C}_{\bm{\theta}}>0$, only dependent on $\bm{\theta}$, such that
\begin{align}\label{eq:inequality_first_step}
    d_\HR(\bm{S}_n,\bm{Z})&\le 
    \wt{C}_{\bm{\theta}} \frac{e^{r\log_+^{1/(2\beta)}(d)}\|\rho_n\|_{\ell^1(\Z)}^{3/2}}{\sqrt{n}\sigma_*^2}
    \log_+\left(\frac{\|\rho_n\|_{\ell^1(\Z)}^{3/2}e^{r\log_+^{1/(2\beta)}(d)}}{\sigma_*^2\sqrt{n}\log_+(d)}\right), \quad \text{ for all }n,d \in \N.
    \end{align} By the triangle inequality, the inequality $x+y \le 2xy$ for $x,y\ge 1$ and $\log_+(ab) \le 2\log_+(a)\log_+(b)$ for all $a,b \in (0,\infty)$, it follows that
\begin{equation}\label{eq:inequ_split_constant}
    \log_+\left(\frac{\|\rho_n\|_{\ell^1(\Z)}^{3/2} e^{r\log_+^{1/(2\beta)}(d)}}{ \sigma_*^2\sqrt{n}\log_+(d)}\right)  \le 4\log_+(\sigma_*^2) \log_+\left(\sqrt{n^{-1}\|\rho_n\|_{\ell^1(\Z)}^{3}}\right) \log_+\left(\frac{e^{r\log_+^{1/(2\beta)}(d)}}{\log_+(d)}\right).
    \end{equation} 
    Moreover, there exists a finite constant $c_{\bm{\theta}}>0$, dependent only on $\bm{\theta}$, for which
    \begin{equation}\label{eq:inequality_psi(d)_proof}
        e^{r\log_+^{1/(2\beta)}(d)} \log_+\left(\frac{e^{r\log_+^{1/(2\beta)}(d)}}{\log_+(d)}\right)
        \le c_{\bm{\theta}} \log_+^{1/(2\beta)}(d)e^{r\log_+^{1/(2\beta)}(d)}=c_{\bm{\theta}}\psi_{\beta,\kappa}(d), 
    \end{equation} 
    for all $d \ge 1$. Thus, by~\eqref{eq:inequality_first_step} and~\eqref{eq:inequality_psi(d)_proof}, there exists a $C_{\bm{\theta}}>0$, depending only on $\bm{\theta}$, such that
    \begin{equation}
    d_\HR(\bm{S}_n,\bm{Z}_n)\le 
    C_{\bm{\theta}} \psi_{\beta,\kappa}(d) 
    n^{-1/2}\|\rho_n\|_{\ell^1(\Z)}^{3/2} \log_+\left(n^{-1/2}\|\rho_n\|_{\ell^1(\Z)}^{3/2}\right)\frac{\log_+(\sigma^2_*(\bm{\Lambda}_n))}{\sigma^2_*(\bm{\Lambda}_n)}, \quad \text{ for all }n,d \in \N.
\end{equation} 

\textbf{Proof of~\eqref{eq:main_result_inequality_d_C}.} Under the assumptions of Theorem~\ref{thm:main_theorem} we are in the setting of Theorem~\ref{thm:Ext_to_convex_dist} with $\bm{\Sigma}_n=\cov(\bm{S}_n)$. Hence,the bound for $d_\CD$ in~\eqref{eq:main_result_inequality_d_C} follows directly by Theorem~\ref{thm:Ext_to_convex_dist}. 

Using~\eqref{m_s_n_equality_1} and~\eqref{eq:finite_herm_exp_proof_d_C} from the proof of Theorem~\ref{thm:Ext_to_convex_dist}, similar bounds to the ones found in Theorems~\ref{thm:main_theorem} \&~\ref{thm:fixed_d} can be constructed for $d_\mW$. Indeed, for $\bm{Z} \sim \mathcal{N}_d(0,\bm{\Sigma}_n)$,~\cite[Eq.~(12)]{MR4488569} implies that $d_\mW(\bm{S}_n,\bm{Z}_n) \le \sqrt{d} \, \|\bm{\Sigma}^{-1}_n\|_{\mathrm{op}}\|\bm{\Sigma}_n\|_{\mathrm{op}}^{1/2} \sqrt{\E[\|\bm{M}_{\bm{S}_n}-\bm{\Sigma}_n\|_{\mathrm{H.S.}}^2]}$, where $\bm{M}_{\bm{S}_n}$ is given in Section~\ref{sec:bascis_malliavin_proofs} above, and the a bound on $\E[\|\bm{M}_{\bm{S}_n}-\bm{\Sigma}_n\|_{\mathrm{H.S.}}^2]$ can be found via~\eqref{m_s_n_equality_1} and~\eqref{eq:finite_herm_exp_proof_d_C}, as is used to prove Theorem~\ref{thm:Ext_to_convex_dist}. Hence, it follows that
\begin{equation*}
    d_\mW(\bm{S}_n,\bm{Z}_n) \le C_{\bm{\theta}} d^{3/2} \, \|\bm{\Sigma}^{-1}_n\|_{\mathrm{op}}\|\bm{\Sigma}_n\|_{\mathrm{op}}^{1/2} \, \| \rho_n \|_{\ell^1(\Z)}^{3/2}n^{-1/2},
\end{equation*} where $(\sigma_*^2)^{-1}(\bm{\Sigma}_n)= \|\bm{\Sigma}^{-1}_n\|_{\mathrm{op}}$ and $\sigma_\dag^2(\bm{\Sigma}_n)=\|\bm{\Sigma}_n\|_{\mathrm{op}}$. 
\end{proof}

\begin{proof}[Proof of Theorem~\ref{thm:fixed_d}] To prove Theorem~\ref{thm:fixed_d}, we apply inequalities from the proof of Theorems~\ref{thm:main_mult_clt_techncial_thm} \&~\ref{thm:Ext_to_convex_dist}. Indeed, using that $\log_+(ab) \le 2\log_+(a)\log_+(b)$, together with~\eqref{eq:bound_before_corre_1} and~\eqref{eq:bound_before_corre_2}, it follows that there exists a $C_{\bm{\theta}}>0$, only dependent on $\bm{\theta}$, such that for all $n,d \in \N$,
 \begin{align*}
    d_\HR(\bm{S}_n,\bm{Z})&\le 
    C_{\bm{\theta}} \log_+(d)\Delta(\bm{S}_n,\bm{\Sigma})
    \log_+\left(\Delta(\bm{S}_n,\bm{\Sigma})\right)\frac{\log_+\left(\overline{\overline{\sigma}} \,\wt\sigma_{*}^2/\,\underline{\underline{\sigma}}\right)}{\wt\sigma_*^2}, \, \text{ where}\\
    \Delta(\bm{S}_n,\bm{\Sigma})&=  \frac{\|\rho_n\|_{\ell^1(\Z)}^{3/2}}{\sqrt{n}}\frac{e^{r\log_+^{1/(2\beta)}(d)}}{\log_+(d)}+\max_{1 \le i,j \le d}\left|\bm{\Sigma}_{i,j}-\E\left[S_{n,i}S_{n,j}\right]\right|.
    \end{align*} Similarly, by~\eqref{eq:d_c_before_correlation}, it follows that
    \begin{equation*}
    d_\CD(\bm{S}_n,\bm{Z}) \le C_{\bm{\theta}} d^{41/24} \left(d \frac{ \|\rho_n\|_{\ell^1(\Z)}^{3/2}}{\sqrt{n}} + \sqrt{\sum_{1 \le i,j \le d} \!\! \! \left(\E[S_{n,i}S_{n,j}]-\bm{\Sigma}_{i,j} \right)^2}\right)\big((\sigma_*^2)
    ^{-3/2}+1\big).
    \end{equation*}

    Using~\eqref{m_s_n_equality_1} and~\eqref{eq:finite_herm_exp_proof_d_C} from the proof of Theorem~\ref{thm:Ext_to_convex_dist}, together with~\cite[Eq.~(12)]{MR4488569}, it follows that
    $$d_\mW(\bm{S}_n,\bm{Z}) \le C_{\bm{\theta}} \sqrt{d}  \, \left(d \frac{ \|\rho_n\|_{\ell^1(\Z)}^{3/2}}{\sqrt{n}} + \sqrt{\sum_{1 \le i,j \le d} \!\! \! \left(\E[S_{n,i}S_{n,j}]-\bm{\Sigma}_{i,j} \right)^2}\right)\frac{\sigma_\dag}{\sigma_*^2}.$$
    
    Hence, it remains only to consider $\bm{\Sigma}_{i,j}-\E[S_{n,i}S_{n,j}]$. To bound this quantity, note by~\cite[Prop.~2.2.1]{MR2962301}, that
    \begin{align}
        \E[S_{n,i}S_{n,j}]&=\frac{1}{n} \sum_{k,r=1}^n \E[\varphi_i(G_k)\varphi_j(G_r)] = \sum_{q,\ell =m}^\infty \frac{a_{i,q}a_{j,\ell}}{n} \sum_{k,r=1}^n \E[H_q(G_k)H_\ell (G_r)] \\
        &= 
        \sum_{\ell =m}^\infty \frac{\ell! a_{i,\ell}a_{j,\ell}}{n} \sum_{k,r=1}^n \rho(k-r)^\ell =
        \sum_{\ell =m}^\infty \ell! a_{i,\ell}a_{j,\ell} \sum_{k \in \Z: |k|<n } \bigg(1-\frac{|k|}{n}\bigg)\rho(k)^\ell.
    \end{align} Altogether, it follows that
    \begin{equation*}
        |\bm{\Sigma}_{i,j}-\E[S_{n,i}S_{n,j}]| \le \left(  \sum_{|k| \ge n} |\rho(k)|^m + \frac{1}{n} \sum_{|k|<n} |k| |\rho(k)|^m \right) \sum_{\ell=m}^\infty \ell! \max_{1 \le i,j \le d} |a_{i,\ell}a_{j,\ell}|, 
    \end{equation*} where we note that $\sum_{\ell=m}^\infty \ell! \max_{1 \le i,j \le d} |a_{i,\ell}a_{j,\ell}|<\infty$ is a finite constant which depends only on $m$ and $\bm{\theta}$, due to~\eqref{eq:main_assump_a_i,q} and Stirling's inequality~\eqref{eq:Stirling's_formula}. Thus, since $x\mapsto x\log_+(x)$ is increasing, we may conclude the proof of the theorem.
\end{proof}

\begin{proof}[Proof of Corollary~\ref{cor:LRD_SRD_fixed cov:mat}]
For the ensuing proof, we only consider $d_\HR$ and $d_\CD$, since the proof of $d_\mW$ follows \emph{exactly} in the same way as $d_\CD$.

\textbf{Part~(i).} We are in the setting of Theorem~\ref{thm:fixed_d}, and will apply this theorem. Assume that $|\rho(k)| \le \wt c \,|k|^{-\mu}L(|k|)$ for $\mu \ge 1$ and $L \in \mathrm{SV}_\infty$ (if $\mu=1$ assume that $L\in \mathrm{SV}_\infty$ is such that $\|\rho\|_{\ell^1(\Z)}<\infty$), implying that $\|\rho\|_{\ell^1(\Z)}<\infty$. Hence, since $\|\rho_n\|_{\ell^1(\Z)}\le \|\rho\|_{\ell^1(\Z)}$ for all $n \in \N$, it remains to bound $\sum_{|j| \ge n} |\rho(j)|^m $ and $ n^{-1}\sum_{|j|<n} |j| \cdot|\rho(j)|^m$. First, using the assumption on $\rho$, together with Karamata's theorem~\cite[Prop.~1.5.10]{MR898871}, implies the existence of a constant $K>0$ such that, 
\begin{equation*}
    \sum_{|j| \ge n} |\rho(j)|^m \le 2 \wt c^m \sum_{j=n}^\infty j^{-\mu m}L(j)^m \le \frac{2 K \wt c^m}{\mu m-1} n^{1-\mu m}L(n)^m, \quad \text{ for all }n \in \N.  
\end{equation*} Since $m \ge 2$ and $\mu \ge 1$, it follows that $\mu m-1 \ge 1$, and hence $\sum_{|j| \ge n} |\rho(j)|^m$ converges to $0$ faster than $n^{-1/2}$ as $n \to \infty$. Next, by assumption on $\rho$, it follows directly, that
\begin{equation*}
    \frac{1}{n}\sum_{|j|<n} |j| \cdot|\rho(j)|^m \le \frac{2 \wt c^m}{n} \sum_{j=1}^n j^{1-\mu m} L(j)^m \le \frac{2 \wt c^m}{n} \sum_{j=1}^\infty j^{1-\mu m} L(j)^m\eqqcolon \frac{K_m}{n}.
\end{equation*} Note that $K_m <\infty$, since if $\mu>1$, then $\mu m >2$, and hence $j^{1-\mu m}L(j)^m$ is obviously summabe, since $L(j)^m \in \mathrm{SV}_\infty$. If instead $\mu=1$, then $L \in \mathrm{SV}_\infty$ such that $\sum_{j=1}^\infty j^{-1 }L(j) <\infty$, hence $L$ is eventually decreasing and tends to $0$ as $j \to \infty$. Assume therefore without loss of generality, that $L(j)<1$ for all $j >k_0$ for some $k_0 \in \N$.  Then 
\begin{equation*}
    \sum_{j=1}^\infty j^{1-m} L(j)^m = \sum_{j=1}^{k_0} j^{1- m} L(j)^m + \sum_{j=k_0+1}^\infty j^{1- m} L(j)^m < \sum_{j=1}^{k_0} j^{1- m} L(j)^m + \sum_{j=k_0+1}^\infty j^{-1} L(j),
\end{equation*} where the upper bound is finite, since $\sum_{j=1}^\infty j^{-1 }L(j) <\infty$. Altogether, it therefore follows that there exists a constant $K$, such that 
\begin{equation*}
n^{-1/2}\|\rho_n\|_{\ell^1(\Z)}^{3/2}
    +\sum_{|j| \ge n} |\rho(j)|^m + n^{-1}\sum_{|j|<n} |j| \cdot |\rho(j)|^m \le K n^{-1/2}
    , \quad \text{ for all }n \in \N.
\end{equation*} Hence, using $\log_+(xy) \le 2 \log_+(x)\log_+(y)$ and that $x \mapsto x\log_+(x)$ is increasing, together with Theorem~\ref{thm:fixed_d}, it follows that there exists a $C_{\bm{\theta},\rho,m}$, which depends only on $\bm{\theta}$, $\mu$, $\wt c$ and $m$, such that
\begin{align*}
    d_\HR(\bm{S}_n,\bm{Z})&\le 
    C_{\bm{\theta},\rho,m} \log_+(d)\wt\Delta(\bm{S}_n,\bm{\Sigma})\log_+\left( \wt\Delta(\bm{S}_n,\bm{\Sigma})\right)
\frac{\log_+\left(\,\overline{\overline{\sigma}} \, \wt\sigma_{*}^2/\,\underline{\underline{\sigma}}\,)\right)}{\wt\sigma_*^2}, \, \text{ where}\\
    \wt\Delta(\bm{S}_n,\bm{\Sigma})&= n^{-1/2}e^{r\log_+^{1/(2\beta)}(d)}/\log_+(d),
    \end{align*} and $d_\CD(\bm{S}_n,\bm{Z}) \le C_{\bm{\theta},\rho,m} d^{65/24} n^{-1/2}\big((\sigma_*^2)
    ^{-3/2}+1\big)$. Hence, the proof of the bound under $d_\CD$ follows directly. The bound under $d_\HR$ follows now by using the exact steps from the proof of~\eqref{eq:main_result_inequality} from Theorem~\ref{thm:main_theorem}.

\textbf{Part~(ii).} Assume that $|\rho(k)| \le \wt c \,|k|^{-\mu}L(|k|)$ for $\mu \in (2/3,1)$ and $L \in \mathrm{SV}_\infty$. We start by showing, that $\|\rho_n\|_{\ell^1(\Z)}\le R n^\alpha S(n)$ for all $n \in \N$, for some $R>0$, $\alpha \in (0,1/3)$ and $S \in \mathrm{SV}_\infty$. Since, $\mu \in (2/3,1)$, it follows by~\cite[Prop.~1.5.8]{MR898871} that there exists some $K>0$, such that $\sum_{j=1}^{n} j^{-\mu} L(j) \le K n^{1-\mu}L(n)$ for all $n \in \N$. Hence, altogether, it follows that $\|\rho_n\|_{\ell^1(\Z)} \le R n^{1-\mu}L(n)$ for all $n \in \N$ where $1-\mu \in (0,1/3)$.

Next, we will bound $\sum_{|j| \ge n} |\rho(j)|^m $ and $ n^{-1}\sum_{|j|<n} |j| \cdot|\rho(j)|^m$. By~\cite[Prop.~1.5.10]{MR898871}, it follows that $\sum_{|j| \ge n} |\rho(j)|^m \le K n^{1-\mu m } L(n)^m$, for all $n \in \N$, since $-\mu m <-1$. To bound $n^{-1}\sum_{|j|<n} |j| \cdot|\rho(j)|^m $, we split into the two cases where $m=2$ and $m\ge 3$. If $m \ge 3$, then $\mu m > 2$, and hence $\sum_{k \in \Z} |j|\cdot |\rho(j)|^m<\infty$, as shown in the proof of Part~(i) above. Hence, $n^{-1}\sum_{|j|<n} |j| \cdot|\rho(j)|^m \le K n^{-1}$, and it remains to consider the case $m=2$. In this case, we have that $1-2\mu \in (-1,-1/3)$, and hence,~\cite[1.5.8]{MR898871} yields that $n^{-1}\sum_{|j|<n} |j| \cdot|\rho(j)|^2 \le K n^{1-2\mu}L(n)^2$.

Altogether, there exists some $\epsilon>0$ small, such that, for all $n \in \N$, it follows that
\begin{equation}\label{eq:gen_bound_n_dependence}
    \frac{\|\rho_n\|_{\ell^1(\Z)}^{3/2}}{\sqrt{n}}
    +\sum_{|j| \ge n} |\rho(j)|^m + \sum_{|j|<n} \frac{|j|}{n} |\rho(j)|^m \le 
        R n^{(2-3\mu)/2}L(n)^{3/2}. 
\end{equation}

The proof of~\eqref{eq:cor_main_lrd_d_C_fix_sigma} now follows directly from~\eqref{eq:gen_bound_n_dependence} and Theorem~\ref{thm:fixed_d}. Hence, it only remains to prove~\eqref{eq:cor_main_lrd_d_R_fix_sigma}. For the remainder of the proof, we handle both cases of~\eqref{eq:gen_bound_n_dependence} at the same time, by writing $Rn^\alpha S(n)$ for the upper bound, where $S$ and $\alpha$ can be found in~\eqref{eq:gen_bound_n_dependence}. By Theorem~\ref{thm:fixed_d}, together with~\eqref{eq:gen_bound_n_dependence}, the proof of Theorem~\ref{thm:main_theorem} and since $x \mapsto x\log_+(x)$ is increasing on $(0,\infty)$, there exists a $\wh{C}_{\bm{\theta}}>0$, dependent only on $\bm{\theta}$, such that
\begin{equation}\label{eq:bound_before_LRD_step}
    d_\HR(\bm{S}_n,\bm{Z}_n)\le 
    \wh{C}_{\bm{\theta}} \psi_{\beta,\kappa}(d) Rn^\alpha S(n) \log_+\left(Rn^\alpha S(n)\right)\frac{\log_+\left(\,\overline{\overline{\sigma}} \, \wt\sigma_{*}^2/\,\underline{\underline{\sigma}}\,)\right)}{\wt\sigma_*^2},
\end{equation} for all $n,d \in \N$. Thus, by the inequality $\log_+(ab) \le 2\log_+(a)\log_+(b)$ for all $a,b \in (0,\infty)$, it follows that $Rn^\alpha S(n) \log_+(R n^\alpha S(n)) \le 2R \log_+(R)n^\alpha S(n)\log_+(n^\alpha S(n))$. Hence, together with~\eqref{eq:bound_before_LRD_step}, this implies the existence of a $C_{\bm{\theta},R,\alpha}>0$, which depends only on $\bm{\theta}$, $R$ and $\alpha$, such that
    \begin{equation*}
        d_\HR(\bm{S}_n,\bm{Z}_n)\le C_{\bm{\theta},R,\alpha}\frac{}{} 
        \psi_{\beta,\kappa}(d)n^{\alpha}S(n)\log_+(n^\alpha S(n)) \frac{\log_+\left(\,\overline{\overline{\sigma}} \, \wt\sigma_{*}^2/\,\underline{\underline{\sigma}}\,)\right)}{\wt\sigma_*^2}, \quad \text{ for all } n,d \in \N.\qedhere
    \end{equation*}
\end{proof}

\subsection{Proof of Corollary~\ref{cor:method_moments_hermite_var}}\label{sec:proofs_thm_2.4&2.5}

\begin{proof}[Proof of Corollary~\ref{cor:method_moments_hermite_var}]
    The proof is based on Corollary~\ref{cor:finite_hermite_series}, which is proved in Section~\ref{sec:proofs_sec_2} below. Note that we are in the setting of Corollary~\ref{cor:finite_hermite_series} since the $\varphi_i$'s are the scaled Hermite polynomials and therefore have a finite Hermite series, i.e.\ $N=\max_{i \in \{1\ld d\}}\sup\{q \ge 0:a_{i,q}\ne 0\}=d+1<\infty$. By definition of $\varphi_i$, it follows that $a_{i,q}=0$ for all $q \ne i+1$ and $a_{i,i+1}=((i+1)!)^{-1/2}$, implying that~\eqref{eq:main_assump_a_i,q} holds for $c=1$, $\kappa=0$ and $\beta=1/2$. Moreover, recall that $\sigma_*^2=1$ by construction of $\bm{S}_n$. Applying Corollary~\ref{cor:finite_hermite_series} with $N=d+1$ and $\bm{\theta}=(1/2,0,1)$, together with the ineqluality $(d+1)^{5/2} \le 2^{5/2}d^{5/2}$ for all $d \ge 1$, yields a uniform constant $C$, such that
    \begin{equation*}
        d_\CD(\bm{S}_n,\bm{Z}_n)\le C d^{125/24} e^{\log(3) d} n^{-1/2} \|\rho_n\|_{\ell^1(\Z)}^{3/2} 
        , \quad \text{ for all }n,d \in \N. \qedhere
    \end{equation*} 
\end{proof}

\subsection{Proof of Corollary~\ref{cor:real_part_charac_fun} and Proposition~\ref{prop:specif_theta_i}}\label{sec:eigenvalues_bound_fGn}
In this section, we will show Corollary~\ref{cor:real_part_charac_fun} and show that certain choices of $\varphi_i$ yield that the smallest eigenvalues of $\bm{\Lambda}_n=\cor(\bm{S}_n)$ are uniformly bounded away from $0$. We start by proving Corollary~\ref{cor:real_part_charac_fun}.
\begin{proof}[Proof of Corollary~\ref{cor:real_part_charac_fun}]
For any choice of $\varphi_i$, i.e.\ $\varphi^{\cos}_i$, $\varphi^{\sin}_i$ or $\varphi^{e}_i$, it follows that $\varphi_i \in L^2(\gamma,\R)$ and $\E[\varphi_i(G_1)]=0=\E[\varphi_i(G_1)G_1]$, implying that $m_i \ge 2$. 
To prove that $\E[\varphi_i(G_1)]=0=\E[\varphi_i(G_1)G_1]$, we start with the case $x\mapsto \varphi^{\cos}_i(x)$, and note that
\begin{equation*}
    \E[\cos(\lambda_i G_1)]=\int_\R \cos(\lambda_i x) \gamma(\D x)=\frac{1}{\sqrt{2\pi}}\int_\R \cos(\lambda_i x) e^{-x^2/2}\D x=e^{-\lambda_i^2/2},
\end{equation*} and similarly $\E[(\cos(\lambda_i G_1)-e^{-\lambda_i^2/2})G_1]=0$. Likewise, it follows directly that $\E[\sin(\lambda_i G_1)]=0$ and $\E[\sin(\lambda_i G_1)G_1]=e^{-\lambda_i^2/2}$, and lastly that $\E[e^{\lambda_i G_1}]=e^{\lambda_i^2/2}$ and $\E[e^{\lambda_i G_1} G_1]=\lambda_i e^{\lambda_i^2/2}$.

Fix a $K\in (0,\infty)$ such that $\lambda_i \in (0,K]$ for all $i=1 \ld d$. The proof is split into three parts, one for each choice of function $\varphi_i$. For any choice of $\varphi_i$, we note that $\E[\varphi_i(G_1)]=0=\E[\varphi(G_1)G_1]$ as explained in the paragraph above. For each choice of $\varphi_i$, we show that Assumptions I) and II) from Lemma~\ref{lem:c_infty_main_result} are satisfied. Note that all choices of $\varphi_i$ are in $C^\infty$, and $\varphi_i^{(q)}\in L^2(\gamma,\R)$, since $\varphi_i^{(q)}(x)^2 e^{-x^2/2} \to 0$ as $x \to \pm \infty$ and $\varphi_i^{(q)}(x)^2 e^{-x^2/2}$ is bounded on any compact interval. Hence, Assumption~I) of Lemma~\ref{lem:c_infty_main_result} is satisfied for all choices of $\varphi_i$. 

Part~(i). Assume that $\varphi_i=\varphi_i^{\cos}$. To show Assumption~II) from Lemma~\ref{lem:c_infty_main_result} is satisfied, it suffices to find $\kappa \in \R$ and $c \in (0,\infty)$, such that $\big| \int_{-\infty}^\infty \varphi_i^{(q)}(x)\gamma(\D x)\big|\le c e^{\kappa q} $ for all $q \ge 2$. To show that such a $\kappa$ exists, note that $\varphi_i^{(q)}(x)
=\lambda_i^{q}\cos\left(q\pi/2+\lambda_i x\right)$ for all $q \ge 2$ and $x \in \R$, implying that
\begin{align*}
    \left| \int_{-\infty}^\infty \varphi_i^{(q)}(x)
    \gamma(\D x)\right| 
    = 
    \lambda_i^{q} \left| \int_{-\infty}^\infty \cos\left(q\pi/2+\lambda_i x\right)
    \gamma(\D x)\right|
    \le 
    \lambda_i^{q} e^{-\lambda_i^2/2} 
    \le e^{q \log(K)}, 
\end{align*} since $|\int_{-\infty}^\infty \cos(q\pi/2+\lambda_i x) \D x|=e^{-\lambda_i^2/2}$ if $q$ is even and equal to $0$ if $q$ is odd. Hence, Assumption~II) is satisfied for $c=1$ and $\kappa=\log(K)$. Lemma~\ref{lem:c_infty_main_result} now implies that~\eqref{eq:main_assump_a_i,q} is satisfied with $\bm{\theta}=(1,\log(K),1)$. Since $\sigma_*^2=\sigma_*^2(\bm{\Lambda}_n)>0$ by assumption, Theorem~\ref{thm:main_theorem} yields a constant $C_{K}>0$, which depends only on $K$, but is independent of $n$ and $d$, which concludes the proof of part~(i).

Part~(ii). Assume that $\varphi_i=\varphi_i^{\sin}$. To show that Assumption~II) from Lemma~\ref{lem:c_infty_main_result} holds, we find $\kappa \in \R$ and $c \in (0,\infty)$, such that $\big| \int_{-\infty}^\infty \varphi_i^{(q)}(x)\gamma(\D x)\big|\le c e^{\kappa q} $ for all $q \ge 2$. To show that such a $\kappa$ exists, note that $\varphi_i^{(q)}(x)=\lambda_i^{q}\sin\left(q\pi/2+\lambda_i x\right)$, implying that $\big| \int_{-\infty}^\infty \varphi_i^{(q)}(x)
    \gamma(\D x)\big| \le e^{q\log(K)}$, exactly as in the case of cosine. Hence, II) is satisfied for $c=1$ and $\kappa=\log(K)$. Lemma~\ref{lem:c_infty_main_result} now implies that~\eqref{eq:main_assump_a_i,q} is satisfied with $\bm{\theta}=(1,\log(K),1)$. Since $\sigma_*^2=\sigma_*^2(\bm{\Lambda}_n)>0$ by assumption, Theorem~\ref{thm:main_theorem} yields a constant $C_{K}>0$, which depends only on $K$, but is independent of $n$, $d$, concluding the proof of part~(ii).

Part~(iii). Assume that $\varphi_i=\varphi_i^{e}$. To show Assumption~II) from Lemma~\ref{lem:c_infty_main_result}, we find $\kappa \in \R$ and $c \in (0,\infty)$, such that $\big| \int_{-\infty}^\infty \varphi_i^{(q)}(x)\gamma(\D x)\big|\le c e^{\kappa q} $ for all $q \ge 2$. To show that such a $\kappa$ exists, note that $\varphi_i^{(q)}(x)=\lambda_i^q e^{\lambda_i x}$ for all $q \ge 2$ and $x \in \R$, implying that 
\begin{equation*}
    \left| \int_{-\infty}^\infty \varphi_i^{(q)}(x) \gamma(\D x)\right|  = \lambda_i^q  \int_{-\infty}^\infty e^{\lambda_ix} \gamma(\D x)=\lambda_i^q e^{\lambda_i^2/2}\le e^{K^2/2}e^{q\log(K)}, \quad \text{ for all }q \ge 2.
\end{equation*} Hence, assumption II) is satisfied for $c=e^{K^2/2}$ and $\kappa=\log(K)$. Finally, Lemma~\ref{lem:c_infty_main_result} implies that~\eqref{eq:main_assump_a_i,q} is satisfied with $\bm{\theta}=(1,\log(K),e^{K^2/2})$. Since $\sigma_*^2=\sigma_*^2(\bm{\Lambda}_n)>0$, Theorem~\ref{thm:main_theorem} yields a constant $C_{K}>0$, which depends only on $K$, but is independent of $n$ and $d$, concluding the proof.
\end{proof}

    Assume that $B^H=(B_t^H)_{t \in \R}$ is a \emph{fractional Brownian motion} (fBm)~\cite[Def.~2.6.2]{MR3729426}, which means that $B^H$ is a Gaussian process on $\R$ where $B_0^H=0$, $\E[B_t^H]=0$ and $B^H$ has covariance $\E[B_t^H B_s^H]=\frac{1}{2}(|t|^{2H}+|s|^{2H}-|t-s|^{2H})$ for $t,s \in \R$, where $H \in (0,1)$ is the \emph{Hurst parameter}. Moreover, we define the \emph{fractional Gaussian noise} (fGn) as the increments of $B^H$, i.e.\ $G_k\coloneqq B^H_{k+1}-B^H_k$ for $k \in \N$, with $\rho(k)
    =\frac{1}{2}(|k+1|^{2H}+|k-1|^{2H}-2|k|^{2H})$ for $k \in \Z$. Note that $G_k \sim \mathcal{N}(0,1)$ for any $k \in \N$. In the ensuing proof, the notation $\sum_{|v| \le n-1}=\sum_{v \in \Z:|v|\le n-1}$ is used.
\begin{proof}[Proof of Proposition~\ref{prop:specif_theta_i}]
    \textbf{Part~(i)}. To show part~(i) of the lemma, we note that it suffices to show that $\inf_{n,d \in \N}\sigma_*^2(\bm{\Lambda}_n) \ge 1-\xi(\tau)>0$, if $\xi(\tau)\coloneqq  
        \|\rho\|_{\ell^2(\Z)}^2
        \zeta(\tau)^{-1}<1$.
We need the following technical result, which is a direct consequence of Gershgorin circle theorem~\cite{zbMATH02560682}. Let $\bm{A}$ be a covariance matrix. Then, the following holds:
\begin{equation}\label{eq:lower_bound_eigenvalues}
    \text{if $a\ge 0$ exists such that $\|\bm{A}-\bm{I}_d\|_\infty \le a$, then $\sigma_*^2(\bm{A})\ge 1-a$.}
\end{equation}
By~\eqref{eq:lower_bound_eigenvalues}, it thus suffices to show that $\|\bm{\Lambda}_n-\bm{I}_d\|_\infty \le \xi(\tau)$ for all $n,d \in \N$, since this will imply that $\inf_{n,d \in \N}\sigma_*^2(\bm{\Lambda}_n)\ge 1-\xi(\tau)>0$ whenever $\xi(\tau)<1$. Since $\sum_{v=1}^{n-1}\rho(v)^2$ is increasing in $n$, the ensuing equation~\eqref{eq:bound_infty_norm_corr_1} will imply $\|\bm{\Lambda}_n-\bm{I}_d\|_\infty \le \xi(\tau)$ for all $n,d \in \N$. Hence, it suffices to show
\begin{align}
    \begin{aligned}\label{eq:bound_infty_norm_corr_1}
        \|\bm{\Lambda}_n-\bm{I}_d\|_\infty
         &\le   
        \frac{1+2\sum_{v=1}^{n-1}\rho(v)^2}{2e^{1/2}(1-e^{-1})(1-e^{-2^{2\tau}})}\\
        &\qquad \times 
        \Bigg( \frac{e^{2^\tau }}{e^{2^{2\tau}/2}}\left(2+\frac{1}{\tau 2^{\tau-1}(2^\tau-1)}\right)
    +
    \frac{e^{3^\tau}}{e^{3^{2\tau}/2}}\left( 1+\frac{1}{\tau 3^{\tau-1}(3^\tau-1)}\right)\Bigg).
    \end{aligned}
    \end{align}
    
    In the remainder of part (i), we show~\eqref{eq:bound_infty_norm_corr_1}. By definition of $\varphi_i$ and~\cite[Eq.~(1.2)]{MR3306374}, it follows that
   \begin{align*}
    \cos(\lambda_i x)-e^{-\lambda_i^2/2}
    =\sum_{q=1}^\infty \frac{(-1)^q \lambda_i^{2q} e^{-\lambda_i^2/2}}{(2q)!}H_{2q}(x)=\sum_{q=2}^\infty 
    \1_{\{\text{even}\}}(q)
    \frac{(-1)^{q/2} \lambda_i^{q} e^{-\lambda_i^2/2}}{q!}H_{q}(x),
\end{align*} which implies that $a_{i,q}=0$ if $q$ is odd, and $a_{i,q}=(-1)^{q/2} \lambda_i^{q} e^{-\lambda_i^2/2}(q!)^{-1}$ if $q$ is even. Recall the notation $S_{n,i}= n^{-1/2}\sum_{k=1}^n \varphi_i(G_k)$. Applying the specific form of $a_{i,q}$ and~\cite[Prop.~2.2.1]{MR2962301}, it follows that
\begin{align*}
    \cov(S_{n,i},S_{n,j})
    &=
    \frac{1}{n} \sum_{k,r=1}^n \sum_{q,\ell =2}^\infty a_{i,q}a_{j,\ell} \E[H_q(G_k)H_\ell(G_r)]
    =
    \sum_{|v|\le n-1}\sum_{q=2}^\infty a_{i,q}a_{j,q} q!\rho(v)^q\\
    &=
    \sum_{|v|\le n-1}\sum_{q=1}^\infty \frac{(\lambda_i \lambda_j )^{2q} e^{-\lambda_i^2/2-\lambda_j^2/2}}{(2q)!}\rho(v)^{2q}.
\end{align*} Considering the case $i=j$ and using the equality $\sum_{n=1}^\infty x^{2n}/(2n)!=\cosh(x)-1$, it follows that
\begin{equation*}
    \var(S_{n,i})
    =
    e^{-\lambda_i^2}\sum_{|v|\le n-1}\sum_{q=1}^\infty \frac{(\lambda_i^2\rho(v))^{2q}}{(2q)!}
    =
    e^{-\lambda_i^2}\sum_{|v|\le n-1}(\cosh(\lambda_i^2 \rho(v))-1).
\end{equation*}Altogether, it can thus be deduced that $\bm{\Lambda}_n$ has the form
    \begin{equation*}
        (\bm{\Lambda}_n)_{i,j}
        =
        \frac{\cov(S_{n,i},S_{n,j})}{\sqrt{\var(S_{n,i})}\sqrt{\var(S_{n,j})}}
        =
        \frac{e^{-\lambda_i^2/2-\lambda_j^2/2}
    \sum_{q=1}^\infty 
    (\lambda_i\lambda_j)^{2q}((2q)!)^{-1}  \sum_{|v|\le n-1} \rho(v)^{2q} }{\prod_{m\in \{i,j\}}\left( \sum_{|v| \le n-1}e^{-\lambda_m^2}(\cosh(\lambda_m^2|\rho(v)|)-1)\right)^{1/2}}, 
    \end{equation*} for all $i,j =1 \ld d$. Since $\bm{\Lambda}_n$ is a correlation matrix, it holds that $(\bm{\Lambda}_n)_{i,i}=1$ for all $i=1\ld d$. Hence, since $\rho(v) \le 1$ and by definition of $\|\cdot\|_{\infty}$, it follows that
\begin{align*}
    \|\bm{\Lambda}_n-\bm{I}_d\|_\infty
    &=
    \max_{j=1\ld d}
    \sum_{i\in \{1\ld d\}\setminus\{j\}}
    \frac{e^{-\lambda_i^2/2-\lambda_j^2/2}
    \sum_{q=1}^\infty 
    (\lambda_i\lambda_j)^{2q}((2q)!)^{-1}  \sum_{|v|\le n-1} \rho(v)^{2q} }{\prod_{m\in \{i,j\}}\left( \sum_{|v| \le n-1}e^{-\lambda_m^2}(\cosh(\lambda_m^2|\rho(v)|)-1)\right)^{1/2}}\\
    &\le 
    \max_{j=1\ld d}
    \sum_{i\in \{1\ld d\}\setminus\{j\}}
    \frac{e^{-\lambda_i^2/2-\lambda_j^2/2}
    (\cosh(\lambda_i\lambda_j)-1) \sum_{|v|\le n-1} \rho(v)^{2} }{\prod_{m\in \{i,j\}}\left( \sum_{|v| \le n-1}e^{-\lambda_m^2}(\cosh(\lambda_m^2|\rho(v)|)-1)\right)^{1/2}}.
\end{align*}

Fix now $j \in \{1\ld  d\}$, and note for all $i \in \{1\ld d\}\setminus\{j\}$ that
\begin{equation}\label{eq:general_assumption_rho_dep}
    \frac{\sum_{|v| \le n-1}\rho(v)^2}{\prod_{m\in \{i,j\}}\left( \sum_{|v| \le n-1}e^{-\lambda_m^2}(\cosh(\lambda_m^2|\rho(v)|)-1)\right)^{1/2}}
    \le 
    \frac{1+2\sum_{v=1}^{n-1}\rho(v)^2}{(1-e^{-1})(1-e^{-(2\vee j)^{2\tau}})},
\end{equation} where we used that 
\begin{align*}
    \prod_{m \in \{i,j\}} \left(\sum_{|v| \le n-1}e^{-\lambda_m^2}(\cosh(\lambda_m^2|\rho(v)|)-1)\right)^{1/2} & = 
    \prod_{m \in \{i,j\}}\left(\sum_{|v| \le n-1}e^{-\lambda_m^2(1-|\rho(v)|)}(1-e^{-\lambda_m^2 |\rho(v)|})^2\right)^{1/2}\\
    &\ge 
    (1-e^{-1})(1-e^{-(2 \vee j)^{2\tau}}).
\end{align*}

For fixed $j \in \{1\ld d\}$ and using that $\lambda_i=i^\tau$, it now remains to consider
\begin{align}\label{eq:overall-bound_toshow}
\begin{aligned}
     &\sum_{i\in \{1\ld d\}\setminus\{j\}} e^{-\lambda_i^2/2-\lambda_j^2/2}(\cosh(\lambda_i\lambda_j)-1) \\
    &\qquad = 
    e^{-j^{2\tau}/2}\left( \sum_{i=1}^{j-1}e^{-i^{2\tau}/2}(\cosh(i^\tau j^\tau)-1)
    +
    \sum_{i=j+1}^d e^{-i^{2\tau}/2}(\cosh(i^\tau j^\tau)-1)\right).
\end{aligned}
\end{align} Note, if $j=1$ then $\sum_{j=1}^0 \equiv 0$ and similarly, if $j=d$, then $\sum_{j=d+1}^d \equiv 0$. Let now $j \ge 2$, and use the inequality $\cosh(x)-1 \le e^{x}/2$ for $x \ge 0$, such that
\begin{equation*}
e^{-j^{2\tau}/2}\sum_{i=1}^{j-1}e^{-i^{2\tau}/2}(\cosh(i^\tau j^\tau)-1) \le \frac{1}{2}\sum_{i=1}^{j-1}e^{-i^{2\tau}/2-j^{2\tau}/2+i^\tau j^\tau}
= 
\frac{1}{2}\sum_{i=1}^{j-1} f(i,j),
\end{equation*} where $f(i,j)\coloneqq e^{-i^{2\tau}/2-j^{2\tau}/2+i^\tau j^\tau}$. The next step is to prove that
\begin{equation}\label{eq:up_bound_alpha_j_terms}
    \sum_{i=1}^{j-1} f(i,j) \le e^{-1/2}\left( e^{2^\tau-2^{2\tau}/2}+e^{3^\tau-3^{2\tau}/2}\left( 1+\frac{1}{\tau 3^{\tau-1}(3^\tau-1)}\right)\right).
\end{equation}
To show~\eqref{eq:up_bound_alpha_j_terms}, we start by proving that 
\begin{equation}\label{eq:bound_f_j(i)}
    f(i,j) \le h(i,j)\coloneqq e^{-1/2-(j-i+1)^{2\tau}/2+(j-i+1)^\tau}, \qquad \text{ for }i=1\ld j-1.
\end{equation} To prove this statement, we note that $f(x,y)$ is a continuous function, and it therefore suffices to show that $f(x,y) \le h(x,y)$ for all $x,y \in \R$ where $1 \le x \le y-1$. Hence, if we fix an arbitrary $x_0\ge 1$, then it suffices to show that
\begin{equation}\label{eq:cont_vers_inequality}
    f(x_0,y)=e^{-x_0^{2\tau}/2-y^{2\tau}/2+x_0^\tau y^\tau} \le e^{-1/2-(y-x_0+1)^{2\tau}/2+(y-x_0+1)^{\tau}}=h(x_0,y), \text{ for all }y \ge x_0.
\end{equation} Next, by dividing through by the right-hand side and taking $\log$, we see that~\eqref{eq:cont_vers_inequality} is equivalent to 
\begin{equation*}
    g(y)\coloneqq 
    -x_0^{2\tau}/2-y^{2\tau}/2+x_0^\tau y^\tau +1/2+(y-x_0+1)^{2\tau}/2-(y-x_0+1)^{\tau} \le 0, \text{ for all }y \ge x_0.
\end{equation*} By rewriting, we see that $g(y)= -(y^\tau-x_0^\tau)^2/2+((y-x_0+1)^\tau-1)^2/2$ for all $y \ge x_0$. Hence, since $y^\tau \ge x_0^\tau$ and $y-x_0+1 \ge 0$ for all $y \ge x_0$, it suffices to show that 
\begin{equation}\label{eq:reduced_eq_tau_func}
    h(y)\coloneqq (y-x_0+1)^\tau-1 -y^\tau + x_0^\tau \le 0, \quad \text{ for all } y \ge x_0.
\end{equation} Since~\eqref{eq:reduced_eq_tau_func} holds for $y=x_0$, it suffices to show that $y \mapsto h(y)$ is a non-increasing function for $y \ge x_0$, i.e.\ 
\begin{equation*}
    \frac{\D}{\D y} h(y) = \tau(y-x_0+1)^{\tau-1}-\tau y^{\tau-1}\le 0, \quad \text{ for all }y \ge x_0.
\end{equation*} However, this follows directly from the fact that $y \mapsto \tau y^{\tau-1}$ is a non-increasing function for all $y \ge 1$, since $\tau \ge 1$, and since $x_0 \ge 1$. Altogether,~\eqref{eq:cont_vers_inequality} thus holds.

Applying~\eqref{eq:cont_vers_inequality}, it follows that
\begin{equation}\label{eq:part1_reduct_ffun}
    e^{1/2} \sum_{i=1}^{j-1}f(i,j) 
    \le 
    \sum_{i=1}^{j-1} e^{-(j-i+1)^{2\tau}/2+(j-i+1)^\tau }
    = 
    \sum_{i=2}^{j} e^{-i^{2\tau}/2+i^\tau }
    \le 
     e^{2^\tau-2^{2\tau}/2 }+\sum_{i=3}^{\infty } e^{-i^{2\tau}/2+i^\tau } .
\end{equation} Hence, since $x \mapsto e^{-x^{2\tau}/2+x^\tau}$ is decreasing on $[3,\infty)$, it follows by change of variable that
\begin{equation*}
    \sum_{i=3}^{\infty } e^{-i^{2\tau}/2+i^\tau } 
    \le 
    e^{-3^{2\tau}/2+3^\tau }+\int_3^\infty e^{-x^{2\tau}/2+x^\tau } \D x
    = 
    e^{-3^{2\tau}/2+3^\tau }+\frac{\sqrt{2e}}{\tau}\int_{3^\tau/\sqrt{2}-1/\sqrt{2}}^\infty e^{-y^2 } (\sqrt{2}y+1)^{-1+1/\tau} \D y.
\end{equation*} Next, using that $(\sqrt{2}y+1)^{-1+1/\tau} \le 3^{1-\tau}$ for all $y \ge 3^\tau/\sqrt{2}-1/\sqrt{2}$ and that $\int_x^\infty e^{-y^2}\D y \le e^{-x^2/(2x)} $, it follows that
\begin{equation}\label{eq:part2_reduct_ffun}
    \sum_{i=3}^{\infty } e^{-i^{2\tau}/2+i^\tau } 
    \le 
    e^{-3^{2\tau}/2+3^\tau }\left(1+\frac{1}{\tau 3^{\tau-1}(3^\tau -1)}\right).
\end{equation}
Hence, putting together~\eqref{eq:part1_reduct_ffun} and~\eqref{eq:part2_reduct_ffun}, we can conclude that~\eqref{eq:up_bound_alpha_j_terms} holds.

Assume now that $j \ne d$. Applying the inequality $\cosh(x)-1 \le e^{x}/2$ for $x \ge 0$, together with the fact that $x \mapsto e^{-x^{2\tau}/2+x^\tau j^\tau}$ is decreasing on $(j,\infty)$, yields
\begin{equation}\label{eq:part1_largeJ}
    2\sum_{i=j+1}^d e^{-i^{2\tau}/2}(\cosh(i^\tau j^\tau)-1) \le \sum_{i=j+1}^\infty e^{-i^{2\tau}/2+i^\tau j^\tau}\le \frac{e^{(j+1)^\tau j^\tau}}{e^{(j+1)^{2\tau}/2}}+\int_{j+1}^\infty e^{-x^{2\tau}/2+x^\tau j^\tau} \D x.
\end{equation}
Next, using the change of variable $y=x^\tau/\sqrt{2}-j^\tau/\sqrt{2}$, and $\int_x^\infty e^{-y^2}\D y \le e^{-x^2}/(2x)$, we see that
\begin{equation}\label{eq:part2_largeJ}
    \int_{j+1}^\infty e^{-x^{2\tau}/2+x^\tau j^\tau} \D x \le \frac{e^{j^{2\tau}/2}}{\tau (j+1)^{\tau-1}} \int_{(j+1)^\tau/\sqrt{2}-j^\tau/\sqrt{2}}^\infty e^{-y^2} \D y\le \frac{e^{-(j+1)^{2\tau}/2+(j+1)^\tau j^\tau }}{\tau (j+1)^{\tau-1}((j+1)^\tau-j^\tau)}.
\end{equation} Finally, we show that
\begin{equation}\label{eq:part3_largeJ}
    \frac{e^{(j+1)^\tau j^\tau }}{e^{(j+1)^{2\tau}/2+j^{2\tau}/2}}\left(1+\frac{1}{\tau (j+1)^{\tau-1}((j+1)^\tau-j^\tau)}\right)\\
    \le  
    \frac{e^{2^\tau }}{e^{1/2+2^{2\tau}/2}}\left(1+\frac{1}{\tau 2^{\tau-1}(2^\tau-1)}\right).
\end{equation} First, we note that $e^{ -(j+1)^{2\tau}/2-j^{2\tau}/2+(j+1)^\tau j^\tau}
    \le  
    e^{-1/2-2^{2\tau}/2+2^\tau }$, which follows directly by showing that $x \mapsto e^{(x+1)^\tau x^\tau -(x+1)^{2\tau}/2-x^{2\tau}/2}$ is non-increasing for $x \ge 1$, and hence bounded by the value at $x=1$. Indeed, since $e^{(x+1)^\tau x^\tau -(x+1)^{2\tau}/2-x^{2\tau}/2}=e^{-((x+1)^\tau -x^{\tau})^2/2}$ and since $x \mapsto (x+1)^\tau-x^\tau$ is non-decreasing for $x \ge 1$, the statement follows. Hence,~\eqref{eq:part3_largeJ} follows if we can show that $x \mapsto \tau (x+1)^{\tau-1}((x+1)^\tau-x^\tau)$ is non-decreasing for all $x \ge 1$, and hence attains its minimal value for $x \ge 1$ at $x=1$. Since $\tau \ge 1$, both $x \mapsto (x+1)^{\tau-1}$ and $x \mapsto (x+1)^\tau-x^\tau$ are non-negative and non-decreasing functions for all $x \ge 1$, and thus it follows directly that $x \mapsto \tau (x+1)^{\tau-1}((x+1)^\tau-x^\tau)$ is non-decreasing for all $x \ge 1$.

Altogether, for $j \in \{1\ld d\}$, applying~\eqref{eq:overall-bound_toshow},~\eqref{eq:up_bound_alpha_j_terms},~\eqref{eq:part1_largeJ},~\eqref{eq:part2_largeJ} and~\eqref{eq:part3_largeJ} yields 
\begin{align}\label{eq:bound_i_j_dependence}
    &\sum_{i\in \{1\ld d\}\setminus\{j\}} e^{-\lambda_i^2/2-\lambda_j^2/2}(\cosh(\lambda_i\lambda_j)-1) \\
   & \qquad \le 
    \frac{1}{2e^{1/2}}\left( e^{2^\tau -2^{2\tau}/2}\left(2+\frac{1}{\tau 2^{\tau-1}(2^\tau-1)}\right)
    +
    e^{3^\tau-3^{2\tau}/2}\left( 1+\frac{1}{\tau 3^{\tau-1}(3^\tau-1)}\right)\right).
\end{align} Hence, together with~\eqref{eq:general_assumption_rho_dep} this concludes the proof of part~(i).

\textbf{Part~(ii)}. Assume now that $(G_k)_{k \in \N}$ is a fGn. Recall that the aim is to show for any $\tau \ge 1.302$ that $\inf_{n,d \in \N}\sigma_*^2(\bm{\Lambda}_n)>0$ for all $H \in (0,1/2)$. Throughout this part of the proof, we assume that $H\in (0,1/2)$. Initially, the ensuing inequality is proven: 
    \begin{equation}\label{eq:fGn_rho_asymp}
        |\rho(v)| \le 2^{2-2H}H(1-2H)|v|^{2H-2}, \qquad \text{ for all }v \in \Z\setminus\{-1,0,1\}.
    \end{equation} 
    Due to symmetry, it suffices to show~\eqref{eq:fGn_rho_asymp} for $v \ge 2$. Since the $G_k$'s are negatively correlated (i.e.\ $\rho(v)<0$) when $H \in (0,1/2)$, it follows for $v \ge 2$, that
    \begin{align*}
        |\rho(v)|&=\frac{1}{2}\left(v^{2H}-(v+1)^{2H}\right)+\frac{1}{2}\left(v^{2H}-(v-1)^{2H} \right)=H(1-2H)\int_0^1 \int_{-t}^t (v+x)^{2H-2}\D x \D t\\
        &\le H(1-2H)\int_0^1 2t (v-t)^{2H-2} \D t \le H(1-2H)(v-1)^{2H-2}\le 2^{2-2H}H(1-2H)v^{2H-2}.
    \end{align*} 

    Thus, applying~\eqref{eq:fGn_rho_asymp}, it follows that
    \begin{align}
    \begin{aligned}\label{eq:bound_rho_bound}
        \sum_{v=1}^{n-1}\rho(v)^2
        &=
        \rho(1)+\sum_{v=2}^{n-1}\rho(v)^2 \le \rho(1)^2+\rho(2)^2+2\int_2^\infty \rho(x)^2 \D x\\
        &\le (1-2^{2H-1})^2+\left(2^{2H}-\frac{3^{2H}}{2}-\frac{1}{2}\right)^2+2^{4-4H}H^2(1-2H)^2\int_2^\infty x^{4H-4} \D x\\
        &=(1-2^{2H-1})^2+\left(2^{2H}-\frac{3^{2H}}{2}-\frac{1}{2}\right)^2+\frac{2 H^2(1-2H)^2}{3-4H}, \quad \text{ for all }n \ge 3. 
    \end{aligned}
    \end{align} For all $H\in (0,1/2)$, it holds that 
    \begin{equation}\label{eq:limits_H_dep}
        1+2(1-2^{2H-1})^2+2\left(2^{2H}-\frac{3^{2H}}{2}-\frac{1}{2}\right)^2+\frac{4 H^2(1-2H)^2}{3-4H} \in (1, 3/2),
    \end{equation} where the function in~\eqref{eq:limits_H_dep} is decreasing, attaining the value $3/2$ at $H=0$ and the value $1$ at $H=1/2$. Hence, by~\eqref{eq:bound_infty_norm_corr_1},~\eqref{eq:bound_rho_bound} and~\eqref{eq:limits_H_dep}, it follows that
    \begin{align}\label{eq:infty_norm_bound_Lambda_n}
        \begin{aligned}
            \|\bm{\Lambda}_n-\bm{I}_d\|_\infty
        &\le 
        \frac{3\Bigg( e^{2^\tau -2^{2\tau}/2}\left(2+\frac{1}{\tau 2^{\tau-1}(2^\tau-1)}\right)
    +
    e^{3^\tau-3^{2\tau}/2}\left( 1+\frac{1}{\tau 3^{\tau-1}(3^\tau-1)}\right)\Bigg)}{4e^{1/2}(1-e^{-1})(1-e^{-2^{2\tau}})}.
        \end{aligned}
    \end{align} Note that the bound in~\eqref{eq:infty_norm_bound_Lambda_n} does not depend on $H$, and hence we can choose an arbitrary $H \in (0,1/2)$. Using~\eqref{eq:infty_norm_bound_Lambda_n}, it follows that $\|\bm{\Lambda}_n-\bm{I}_d\|_\infty<1$ for all $\tau \ge 1.302$. Indeed, this holds, since $\|\bm{\Lambda}_n-\bm{I}_d\|_\infty<1$ holds for $\tau =1.302$ and since the upper bound in~\eqref{eq:infty_norm_bound_Lambda_n} is decreasing in $\tau$. To verify this, it suffices to prove that $\tau \mapsto e^{p^\tau-p^{2\tau}/2}$ is a decreasing function in $\tau$ for $p=2$ and $p=3$. By monotonicity of the exponential function, this is equivalent to showing that $\tau \mapsto p^\tau-p^{2\tau}/2$ is decreasing. Hence, we consider the derivative and show its negative:
    \begin{equation*}
        \frac{\D}{\D \tau}(p^\tau-p^{2\tau}/2) = p^\tau \log(p)-p^{2\tau}\log(p)=-p^\tau (p^\tau-1)\log(p)< 0.
    \end{equation*} The last inequality follows, since $p=2$ or $p=3$ and $\tau>1$, implying that $p^\tau >1$ and $\log(p)>0$.
\end{proof}

\subsection{Proof of Corollary~\ref{cor:Breuer--Major_roc}}
\begin{proof}[Proof of Corollary~\ref{cor:Breuer--Major_roc}]
    One of the main differences from Theorem~\ref{thm:main_mult_clt_techncial_thm} is that $\varphi_i=\varphi$ for all $i\in \{1\ld d\}$, i.e.\ $a_{i,q}=a_q$ 
    for all $i=1\ld d$ and $q \ge 2$.

    The proof of Corollary~\ref{cor:Breuer--Major_roc} follows the same steps as the proof of Theorem~\ref{thm:main_mult_clt_techncial_thm} down to~\eqref{eq:f_n,i_kernels}, hence $S_{n,i} \eqd \sum_{ \ell=2}^\infty I_\ell \left(f_n(i,\ell)\right)$, where the kernels $f_n(i,\ell)$ are given by $f_n(i,\ell)=a_\ell n^{-1/2} \sum_{k=\floor{nt_{i-1}}+1}^{\floor{nt_i}}
    e_k^{\otimes \ell} \in \mH^{\odot \ell} $ for all $n \in \N$ and $i \in \{1\ld d\}$. For these kernels, let $$\Delta_n(i,j)\coloneqq \E[\langle -DL^{-1}S_{n,i}, DS_{n,j} \rangle_{\mH}]-\langle -DL^{-1}S_{n,i}, DS_{n,j} \rangle_{\mH}\eqd \sum_{\ell,q=2}^\infty \delta(i,j,q,\ell,n),$$ for all $n \in \N$ and $i,j=1\ld d$, where $$\delta(i,j,q,\ell,n) 
        \coloneqq 
        \frac{1}{\ell}\big( \E\big[\big\langle D I_{\ell}(f_n(i,\ell)), DI_{q}(f_n(j,q)) \big\rangle_{\mH}\big]-\big\langle DI_{\ell}(f_n(i,\ell)), DI_{q}(f_n(j,q)) \big\rangle_{\mH}\big).$$ Thus, it follows from~\eqref{eq:triangle_inequ_sec5} \&~\eqref{eq:Product_hermite_form_delta}, together with~\cite[Prop.~A.1 \& Prop.~A.2]{MR3911126}, that
    \begin{equation*}
        \E\bigg[\max_{1 \le i,j \le d}| \Delta_n(i,j)|\bigg]\le \sum_{q,\ell \ge 2}M_{\ell+q-2} \log^{(q+\ell-2)/2}(2d^2-1+e^{(\ell+q-2)/2-1})\max_{1\le i,j \le d} \|\delta(i,j,q,\ell,n)\|_2,
    \end{equation*} for all $n \in \N$, where $M_{\ell+q-2}\coloneqq(4e/(\ell+q-2))^{(\ell+q-2)/2}$ for all $\ell,q \ge 2$. In a similar fashion to the proof of Theorem~\ref{thm:main_mult_clt_techncial_thm}, we note that $\|\delta(i,j,q,\ell,n)\|_2^2=\var(\langle -DL^{-1}I_\ell(f_n(i,\ell)),DI_q(f_n(j,q))\rangle_\mH)$, and use the definition of the kernels $f$ together with the chain rule to see
    \begin{align*}
        \langle -DL^{-1}I_\ell(f_n(i,\ell)),DI_q(f_n(j,q))\rangle_\mH &\eqd \frac{a_\ell a_q q}{n} \sum_{k=\floor{nt_{i-1}}+1}^{\floor{nt_i}}\sum_{r=\floor{nt_{j-1}}+1}^{\floor{nt_j}}
        H_{\ell-1}(G_k)H_{q-1}(G_r) \rho(k-r),
    \end{align*} for all $n \in \N$, $i,j=1\ld d$ and $\ell,q \ge 2$. We can now bound $\|\delta(i,j,q,\ell,n)\|_2^2$ explicitly in terms of $i,j,q,\ell$. Indeed, as in~\cite[Eq.~(21)]{MR4488569}, it follows that
    \begin{align*}
        \|\delta(i,j,q,\ell,n)\|_2^2 
        \le \frac{a_\ell^2 a_q^2 q^2}{ n^2} \sum_{k,k',r,r'=1}^n |\cov(H_{\ell-1}(G_k)H_{q-1}(G_r),H_{\ell-1}(G_{k'})H_{q-1}(G_{r'})) \rho(k-r)\rho(k'-r')|.
    \end{align*} Next, applying the bounds in~\eqref{eq_Gebelin'_inequality},~\eqref{eq:Cauchy_schwarz},~\eqref{eq:proof_bound_indep_d} and \eqref{eq:ineq_Youngs_applic}, it follows, for $K_n\coloneqq 2\|\rho_n\|_{\ell^1(\Z)}^{3/2}n^{-1/2}$ for all $n \in \N$, that
    \begin{equation*}
        \E\bigg[\max_{1 \le i,j \le d}|\Delta_n(i,j)|\bigg] \le K_n\sum_{q,\ell=2}^\infty  M_{\ell+q-2} \wt C_{q,\ell}q \log^{(\ell+q-2)/2}(2d^2-1+e^{(\ell+q)/2})\max_{1 \le i,j \le d}|a_{i,\ell} a_{j,q}|.
    \end{equation*} This is the exact same bounds as in~\eqref{eq:L1norm_max_Delta_n}, and the remainder of the proof therefore follows as in in the proof of Theorem~\ref{thm:main_mult_clt_techncial_thm}, by choosing $a_{i,q}=a_q$ for all $i \in \{1\ld d\}$ and $q \ge 2$. This yields that there exists a $C_{\bm{\theta}}>0$, independent of $d$ and $n$, such that, 
    \begin{align*}
    d_\HR(\bm{S}_n,\bm{Z})&\le 
    C_{\bm{\theta}} \frac{\Delta(\bm{S}_n)}{\sigma_*^2}\log_+(d) \log_+\left(\frac{\Delta(\bm{S}_n)}{\sigma_{*}^2}\right), \quad \text{ for all }n,d \in \N,
    \end{align*} where $\Delta(\bm{S}_n)= \|\rho_n\|_{\ell^1(\Z)}^{3/2}
    e^{r\log_+^{1/(2\beta)}(d)} n^{-1/2} \log_+(d)^{-1}$. Next, following the inequalities~\eqref{eq:inequality_first_step},~\eqref{eq:inequ_split_constant} and~\eqref{eq:inequality_psi(d)_proof} from the proof of Theorem~\ref{thm:main_theorem}, the statement of Corollary~\ref{cor:Breuer--Major_roc} follows.
\end{proof}

\section{Proofs of Results form Sections~\ref{subsec:generality_of_setting} \&~\ref{sec:ext_results_examples}}\label{sec:proofs_sec_2}

Recall the definition of \emph{Borel-isomorphism}, where we say that two measurable spaces $(A,\mathcal{A})$ and $(B,\mathcal{B})$ are isomorphic if and only if there exists a bijective function $f:A \to B$ such that both $f$ and $f^{-1}$ are measurable. Hence, two metric spaces $(A,\rho_1)$ and $(B,\rho_2)$ are Borel isomorphic if and only if they are isomorphic with their $\sigma$-algebras of Borel sets (\cite[Sec.~13.1]{MR1932358}), in this case we write $A \cong B$.
\begin{proof}[Proof of Proposition~\ref{prop:Phi_G_recovers_all_dist}]
\textbf{Part~(i)} Due to independence, it suffices to prove the equality in distribution for the marginals, which follows if we can prove the following statement. Let $d \ge 1$, $G \sim \mathcal{N}(0,1)$ and $\bm{X}$ be a random vector in $\R^d$ with an arbitrary distribution. Then a measurable function $f: \R \to \R^d$ exists such that $f(G) \eqd \bm{X}$.

We now prove the statement above. Assume that $d = 1$ (use the notation $\bm{X}^{(1)}=X$) and that $G \sim \mathcal{N}(0,1)$. Inverse transform sampling for a uniform random variable $U \sim U(0,1)$, gives a function $T:\R \to \R$ where $T$ and $T^{-1}$ are measurable such that $T(U) \eqd G$. Moreover, we also get a measurable function $g:(0,1) \to \R$ such that $g(U)\eqd X$. Thus, $X \eqd g(U) \eqd g\circ T^{-1}(G)$, and choosing $\varphi=g \circ T^{-1}$ yields a measurable function from $\R$ to $\R$ such that $X \eqd \varphi(G)$, concluding the case where $d=1$.

Assume next that $d \ge 2$ and $G \sim \mathcal{N}(0,1)$. We want to show, for $\bm{X}$ having an arbitrary distribution in $\R^d$, that there exists a measurable function $\varphi:\R \to \R^d$ such that $\bm{X}\eqd \varphi(G)$. By~\cite[Thm~13.1.1]{MR1932358}, it follows that $\R$ and $\R^d$ are Borel-isomorphic, i.e.\ $\R \cong \R^d$. This implies that there exists a bijective function $f: \R \to \R^d$ such that both $f$ and $f^{-1}$ are measurable. From the $d=1$ case, we know that there exists a function $h: \R \to \R$ such that $f^{-1}(\bm{X})\eqd h(G)$, and hence 
\begin{equation*}
    \bm{X}\eqd f\circ h(G)=\varphi(G), \quad \text{where } \varphi=f\circ h.
\end{equation*}  Hence, we have constructed a measurable function $\varphi : \R \to \R^d$, concluding the proof of part~(a).

\textbf{Part~(ii)} Due to~\cite[Thm~2.1]{MR2733224}, it follows that if a function $f\in L^2(\gamma)$ with $f(x)=\sum_{q=0}^\infty a_q H_q(x)$ satisfy $\sum_{q=0}^\infty (q+1)^{2n}q! a_q^2<\infty$ for all $n \in \N_0$, then $f \in \mathscr{S}(\R)$, i.e. in the Schwartz space of rapidly decreasing smooth real-valued functions. Hence, since $\mathscr{S}(\R) \subset C^\infty(\R)$, it follows that $\Phi \in C^\infty$, if, for all $i=1\ld d$, $\sum_{q=0}^\infty (q+1)^{2n}q! a_{i,q}^2<\infty$ for all $n \in \N_0$. By assumption~\eqref{eq:main_assump_a_i,q}, it follows that $\sum_{q=0}^\infty (q+1)^{2n}q! a_{i,q}^2 \le c^2 \sum_{q=2}^\infty (q+1)^{2n} (q!)^{1-2\beta} e^{2\kappa q}$. If $\beta=1/2$ and $\kappa<0$, then $((q+1)^{2n}e^{2\kappa q})_{q \ge 2} \in \ell^1(\N)$ and similarly, if $\beta>1/2$, then Stirling's inequality~\eqref{eq:Stirling's_formula} implies that $((q+1)^{2n}(n!)^{1-2\beta}e^{2\kappa q})_{q \ge 2} \in \ell^1(\N)$, since $1-2\beta<0$. Hence, $ c^2 \sum_{q=2}^\infty (q+1)^{2n} (q!)^{1-2\beta} e^{2\kappa q}< \infty$ in all cases, and we therefore always have that $\Phi \in C^\infty$.

\textbf{Part~(iii)} Since $\bm{S}_n \cid \bm{Z}$ as $n \to \infty$, Portmanteau's theorem~\cite[Thm~5.25]{MR4226142}, implies, for every open set $U \subset \R^d$, that $\liminf_{n\to\infty}\p(\bm{S}_n\in U) \ge \p(\bm{Z}\in U)$. Since $\bm{\Sigma}$ is invertible (i.e.\ $\det \bm{\Sigma}>0$), and since the Gaussian $\bm{Z}$ has density which is continuous and strictly positive on $\R^d$, it follows that 
\begin{equation*}
    \p(\bm{Z} \in U) = \int_U \frac{1}{(2\pi)^{d/2}\sqrt{\det\Sigma}}
\exp\!\Big(-\tfrac12 \bm{x}^\tra \bm{\Sigma}^{-1} \bm{x}\Big) \D \bm{x}>0,
\end{equation*} for all non-empty open sets $U \subset \R^d$.
\end{proof}

\begin{proof}[Proof of Proposition~\ref{prop:example_gen_d_dim}]
    Consider $\Phi(x)=(\varphi_1(x),\varphi_2(x))$, where $\varphi_1(x)=\cos(x)-e^{-1/2}$ and $\varphi_2(x)=\sin(x)-xe^{-1/2}$. By the proof of Corollary~\ref{cor:real_part_charac_fun}, it follows that $\Phi$ satisfies~\eqref{eq:main_assump_a_i,q}. Hence, it remains to prove that the support of the law of $\bm{S}_2$ has a non-empty interior in $\R^2$. Since multiplication with $1/\sqrt{2}$ is a homeomorphism, it suffices to show that the support of the law of $\sqrt{2}\bm{S}_2$ has a non-empty interior in $\R^2$.
    
    Consider $n=2$, and let $F(x,y)=\Phi(x)+\Phi(y)$ where $\sqrt{2}\bm{S}_2=F(G_1,G_2)$. Let $JF(x_0,y_0)$ be the Jacobian of $F$ at $(x_0,y_0)$, where straightforward calculations show that $\det( JF(0,\pi/2)) = 1 - e^{-1/2} \ne 0$, making $JF(0,\pi/2)$ intervible at this point. By the inverse function theorem, there exist neighbourhoods $U \subset \mathbb{R}^2$ of $(0,\pi/2)$ and
    $V \subset \mathbb{R}^2$ of $F(0,\pi/2)$ such that
    $F : U \to V$ is a $C^1$-diffeomorphism. In particular, $V$ is a non-empty open
    subset of $\mathbb{R}^2$ contained in the image of $F$. 
    
    Since $(G_1,G_2)$ is a centered bivariate normal vector with identity covariance matrix, it follows that $(G_1,G_2)$ has a strictly positive density on all of $\R^2$, and hence $\p((G_1,G_2) \in A)>0$ for all open $A \subset \R^2$. Next, since $F: U \to V$ is a $C^1$-diffeomorphism, it follows for all open balls $B \subset V$ that $F^{-1}(B)$ is a non-empty open subset of $\R^2$, and hence
    \[
        \p(\sqrt{2}\bm{S}_2 \in B)
        = \p(F(G_1,G_2) \in B)
        = \p((G_1,G_2) \in F^{-1}(B)) > 0.
    \] Thus, the support of the law of $\sqrt{2}\bm{S}_2$ has a non-empty interior in $\R^2$, concluding that the support of the law of $\sqrt{2}\bm{S}_2$ has non-empty interior in $\R^2$.
\end{proof}

\begin{proof}[Proof of Corollary~\ref{cor:main_theorem_n_dep_d}]
Assume now that $d\in \N$ depends on $n$ such that $d=d(n)\le n^\lambda$ for a fixed arbitrary $\lambda>0$. Recall for any $\beta \in (1/2,1]$ and $\kappa \in \R$, that there for any $\epsilon>0$ exists a $c_\epsilon>0$ such that $\psi_{\beta,\kappa}(d) \le c_\epsilon d^{\epsilon}$ (by~\eqref{eq:bounds_psi}) for any $d \ge 2$. Moreover, for $\beta=1/2$ and $\kappa < \Upsilon $, we have $\psi_{\beta,\kappa}(d)=d^{r}\log(d)$ for all $d \ge 2$. 

Part $(i)$: We start by proving the bound for $d_\HR$. From Corollary~\ref{cor:LRD_SRD_fixed cov:mat}(i), there exists a $C_{\bm{\theta}}\in (0,\infty)$, such that $$d_\HR(\bm{S}_n,\bm{Z})\le 
    C_{\bm{\theta}}
    \psi_{\beta,\kappa}(n^\lambda)n^{-1/2} \log(n) \log_+\left(\,\overline{\overline{\sigma}} \, \wt\sigma_{*}^2/\,\underline{\underline{\sigma}}\,)\right)\wt\sigma_*^{-2},$$ for all $n \in \N$. Assume now that $\beta \in (1/2,1]$ and $\kappa \in \R$. Applying~\eqref{eq:bounds_psi}, we see for any $\epsilon>0$ that there exists a $c_{\epsilon}>0$ such that 
    \begin{equation*}
    d_\HR(\bm{S}_n,\bm{Z})\le 
    c_\epsilon C_{\bm{\theta}}
    n^{\epsilon \lambda-1/2} \log(n)\log_+\left(\,\overline{\overline{\sigma}} \, \wt\sigma_{*}^2/\,\underline{\underline{\sigma}}\,)\right)\wt\sigma_*^{-2}, \quad \text{ for all }n \in \N.
    \end{equation*}
    Thus, for any $\epsilon \in (0,1/(2\lambda))$ we can choose a $\epsilon_\lambda\in (\epsilon\lambda,1/2)$, in which case there exists a constant $K_{\epsilon,\lambda} \in (0,\infty)$, which only depends on $\epsilon$ and $\lambda$, such that
    \begin{equation*}
    d_\HR(\bm{S}_n,\bm{Z})\le 
    K_{\epsilon,\lambda} C_{\bm{\theta}} 
    n^{\epsilon_\lambda-1/2}\log_+\left(\,\overline{\overline{\sigma}} \, \wt\sigma_{*}^2/\,\underline{\underline{\sigma}}\,)\right)\wt\sigma_*^{-2}, \quad \text{ for all }n \in \N.
    \end{equation*} Assume next, that $\beta=1/2$, and note in this case that $\psi_{\beta,\kappa}(d)n^{-1/2}\log(n)=\lambda n^{\lambda r-1/2}\log^2(n)$. Note that $r$ can become arbitrarily small if we choose a $\kappa$ small enough by the definition of $r$. Therefore, choosing $\kappa < \min\{-\log(4e^{1/(2e)}\lambda),\log\big(W(e^{-1 + 1/(2 e)})/e^{1/(2e)}\big)\}/2-\log(24)/2-5/(4e)$, implies $r\lambda <1/2$, by the definition of $r$ from~\eqref{defn:r_constant-2}. Using this choice of $\kappa$, yields the existence of a $\delta \in (r\lambda,1/2)$ and a constant $\wt K_{\epsilon,r,\lambda}$, which depends on $r$, $\epsilon$ and $\lambda$, such that
    \begin{equation*}
    d_\HR(\bm{S}_n,\bm{Z})\le 
    \wt K_{\epsilon,r,\lambda} C_{\bm{\theta}}
    n^{\delta-1/2}\log_+\left(\,\overline{\overline{\sigma}} \, \wt\sigma_{*}^2/\,\underline{\underline{\sigma}}\,)\right)\wt\sigma_*^{-2}, \quad \text{ for all }n \in \N.
    \end{equation*}

    Next, we show the bound for $d_\CD$. From Corollary~\ref{cor:LRD_SRD_fixed cov:mat}(i), there exists a $C_{\bm{\theta},\rho,m}\in (0,\infty)$, such that $$d_\CD(\bm{S}_n,\bm{Z})\le 
    C_{\bm{\theta}} n^{\lambda \cdot 65/24-1/2} ((\sigma_*^2)^{-3/2}+1), \quad \text{ for all }n \in \N.$$ Hence, if $\lambda <12/65$, then there exists a constant $c_\lambda$, such that $n^{\lambda\cdot 65/24-1/2} \le c_\lambda n^{-\zeta_\lambda}$ for some $\zeta_\lambda \in (0,1/2)$ for all $n \in \N$. 

    Part $(ii)$:  From Corollary~\ref{cor:LRD_SRD_fixed cov:mat}(ii), it follows directly that 
    \begin{equation}
    d_\HR(\bm{S}_n,\bm{Z})
    \le 
        C_{\bm{\theta},\wt c,\mu} 
        \psi_{\beta,\kappa}(n^\lambda)\frac{L(n)^{3/2}}{n^{(3\mu-2)/2}}\log_+\bigg(\frac{L(n)^{3/2}}{n^{(3\mu-2)/2}}\bigg) \frac{\log_+\left(\,\overline{\overline{\sigma}} \, \wt\sigma_{*}^2/\,\underline{\underline{\sigma}}\,)\right)}{\wt\sigma_*^2}, 
      \quad \text{ for all }n \in \N.
    \end{equation}

    Assume that $\beta \in (1/2,1]$ and $\kappa \in \R$. Applying~\eqref{eq:bounds_psi}, we see for any $\epsilon>0$ that there exists a $c_\epsilon\in (0,\infty)$, such that 
    \begin{equation*}
    d_\HR(\bm{S}_n,\bm{Z})\le 
    c_\epsilon C_{\bm{\theta},\wt c,\mu} 
    n^{\epsilon\lambda-(3\mu-2)/2} L(n)^{3/2} \log_+\bigg(\frac{L(n)^{3/2}}{n^{(3\mu-2)/2}}\bigg)\frac{\log_+\left(\,\overline{\overline{\sigma}} \, \wt\sigma_{*}^2/\,\underline{\underline{\sigma}}\,)\right)}{\wt\sigma_*^2}, \quad \text{ for all }n \in \N.
    \end{equation*} Thus, for any $\epsilon\in (0,(3\mu-2)/(2\lambda))$ we can choose a $\delta\in (\epsilon\lambda,(3\mu-2)/2) $, such that there exists a constant $K_{\epsilon,\lambda}\in (0,\infty)$, which depends on $\epsilon$ and $\lambda$, yielding
    \begin{equation*}
       d_\HR(\bm{S}_n,\bm{Z})\le K_{\epsilon,\lambda} C_{\bm{\theta},\wt c,\mu}
       n^{\delta-(3\mu-2)/2}\frac{\log_+\left(\,\overline{\overline{\sigma}} \, \wt\sigma_{*}^2/\,\underline{\underline{\sigma}}\,)\right)}{\wt\sigma_*^2}, \quad \text{ for all }n \in \N.
    \end{equation*} 
    
    Next, assume $\beta=1/2$, implying that 
    \begin{equation}
    d_\HR(\bm{S}_n,\bm{Z})
    \le 
        C_{\bm{\theta},\wt c,\mu} 
        n^{r\lambda-(3\mu-2)/2} \log_+(n^\lambda)L(n)^{3/2}\log_+\bigg(\frac{L(n)^{3/2}}{n^{(3\mu-2)/2}}\bigg) \frac{\log_+\left(\,\overline{\overline{\sigma}} \, \wt\sigma_{*}^2/\,\underline{\underline{\sigma}}\,)\right)}{\wt\sigma_*^2}, 
    \end{equation} for all $n \in \N$. By the definition of $r$ from~\eqref{defn:r_constant-2}, choosing 
    $$\kappa< \min\{\log((3\mu-2)/(4e^{1/(2e)}\lambda))/2-\log(24)/2-5/(4e),\Upsilon\},$$ implies that $r<(3\mu-2)/(2\lambda)$. Thus, due to the slow variation of $L$ and the fact that any polynomial growth bounds logarithmic growth, there exists a $\delta \in (r\lambda,(3\mu-2)/2)$ and $K \in (0,\infty)$ (which depends on $r$ and $\lambda$), such that
    \begin{equation*}
       d_\HR(\bm{S}_n,\bm{Z})\le K C_{\bm{\theta},\wt c,\mu}
       n^{\delta-(3\mu-2)/2}\frac{\log_+\left(\,\overline{\overline{\sigma}} \, \wt\sigma_{*}^2/\,\underline{\underline{\sigma}}\,)\right)}{\wt\sigma_*^2}, \quad \text{ for all }n \in \N.
    \end{equation*}

    Finally, we show the bound for $d_\CD$. From Corollary~\ref{cor:LRD_SRD_fixed cov:mat}(ii), there exists a $C_{\bm{\theta},\wt c,\mu}\in (0,\infty)$, such that 
    \begin{equation}
    d_\CD(\bm{S}_n,\bm{Z}) 
      \le
      C_{\bm{\theta},\wt c,\mu} n^{\lambda \cdot 65/24-(3\mu-2)/2} L(n)^{3/2} \big((\sigma_*^2)^{-3/2}+1\big), 
      \quad \text{ for all }n,d \in \N.
    \end{equation} Hence, if $\lambda <(3\mu-2)\cdot 12/65$, then there exists a constant $c_\lambda$, such that $n^{\lambda\cdot 65/24-(3\mu-2)/2} \le c_\lambda n^{-\zeta_\lambda}$ for some $\zeta_\lambda \in (0,(3\mu-2)/2)$ for all $n \in \N$.
\end{proof}

\begin{proof}[Proof of Lemma~\ref{lem:c_infty_main_result}]
Due to~\cite[Prop.~1.4.2(v)]{MR2962301}, it follows that
    \begin{equation}\label{eq:general_form_a_i_q}
        a_{i,q}
        =\frac{1}{q!} \int_\R \varphi_i^{(q)}(x)  \gamma(\D x), \quad \text{ for all }i=1\ld d \text{ and }q \ge 2.
    \end{equation} 
    Assume that $\beta=1/2$ and $\kappa < \Upsilon$, and recall by assumption ii) that $\big|\int_\R \varphi_i^{(q)}(x)  \gamma(\D x)\big| \le c e^{\kappa q}\sqrt{q!}$ for all $q \ge 2$. Thus, it follows from~\eqref{eq:general_form_a_i_q} that $|a_{i,q}|\le c e^{\kappa q}/\sqrt{q!}$ for all $q \ge 2$ as desired. 
    Next, assume that $\beta \in (1/2,1]$ and $\kappa \in \R$, and use assumption ii) to see that $\big|  \int_\R \varphi_i^{(q)}(x)  \gamma(\D x)\big| \le c e^{\kappa q}(q!)^{1-\beta}$ for all $q \ge 2$. From~\eqref{eq:general_form_a_i_q} it now directly follows that $|a_{i,q}|\le c e^{\kappa q}/(q!)^{\beta}$ for all $q \ge 2$, concluding the proof.
\end{proof}

\begin{proof}[Proof of Lemma~\ref{lem:example_beta=1/2}]
    Let $M=\Upsilon$ or $M=-\log(3)/2$.
    
    Part~(i). Assume that $\varphi_i$ is the modification of the density of a $\mathcal{N}(0,\sigma_i^2)$-distribution given in Lemma~\ref{lem:example_beta=1/2}~(i). From~\cite[Thm~2.9]{Hermite_Davis}, it follows that
\begin{equation*}
    \varphi_i(x)=\sum_{q \ge 1} \frac{(-1)^q}{q!2^q\sqrt{2\pi(\sigma_i^2+1)^{2q+1}}}H_{2q}(x)=
    \sum_{q \ge 2} \frac{\1_{\text{even}}(q)(-1)^{q/2}}{(q/2)!2^{q/2}\sqrt{2\pi(\sigma_i^2+1)^{q+1}}}H_{q}(x),
\end{equation*} for all $x\in \R$, with $a_{i,q}=\1_{\text{even}}(q)(-1)^{q/2}((q/2)!2^{q/2}\sqrt{2\pi(\sigma_i^2+1)^{q+1}})^{-1}$ for all $q \ge 2$ and $i=1\ld d$. Here we used the function $\1_{\text{even}}(q)\coloneqq (1+(-1)^q)/2$ for all $q \in \N$. Since $m_i=2$, it follows that $\E[\varphi_i(G_1)]=0=\E[\varphi_i(G_1)G_1]$ as desired. For this $a_{i,q}$, the aim is to find a constant $c \in (0,\infty)$ and a $\kappa<M$ such that $|a_{i,q}| \le ce^{\kappa q}/\sqrt{q!}$ for $q \ge 2$ and $i=1\ld d$. If $q$ is odd, this is trivial, so we will restrict to the case of $q$ even. Using Stirling's inequality~\eqref{eq:Stirling's_formula}, it follows that
\begin{align*}
    |a_{i,q}| =\frac{1}{(q/2)!2^{q/2}\sqrt{2\pi(\sigma_i^2+1)^{q+1}}} &\le \frac{1}{\sqrt{2\pi(\sigma_i^2+1)}}\frac{e^{q/2}}{\sqrt{\pi q}q^{q/2}(\sigma_i^2+1)^{q/2}}\\
    &\le \frac{e^{1/24}}{(4\pi^3)^{1/4}
    e^{-2\Upsilon}}\left( \frac{1}{\min_{1\le i \le d}\sigma_i^2+1}\right)^{q/2}(q!)^{-1/2},
\end{align*} for all $q \ge 2$ and $i=1\ld d$. Note next that
\begin{equation*}
    \left( \frac{1}{\min_{1\le i \le d}\sigma_i^2+1}\right)^{q/2}=\exp\left\{-q\log(\min_{1 \le i \le d}\sigma_i^2+1)/2\right\}=e^{\kappa q},
\end{equation*} where $\kappa = -\log(\min_{1 \le i \le d}\sigma_i^2+1)/2<M$, since $\min_{1\le i \le d}\sigma_i >e^{-2M}-1$. Altogether, it follows that $|a_{i,q}|\le ce^{\kappa q}/\sqrt{q!}$ for all $q\ge 2$ and $i=1\ld d$ where $c=e^{1/24}(4\pi^3)^{-1/4}e^{2M}$, implying that~\eqref{eq:main_assump_a_i,q} holds.

Part~(ii). Assume now that $\varphi_i$ is the modification of the cumulative distribution function of a $\mathcal{N}(0,\sigma_i^2)$-distribution given in Lemma~\ref{lem:example_beta=1/2}(ii). From~\cite[Thm~2.10 \& Eq.~(2-42)]{Hermite_Davis}, it follows that
\begin{equation*}
    \varphi_i(x)=
    \sum_{q =2}^\infty \frac{\1_{\text{odd}}(q)(-1)^{(q-1)/2}}{q((q-1)/2)!2^{(q-1)/2}\sqrt{2\pi(\sigma_i^2+1)^{q}}}H_{q}(x),
\end{equation*} for all $x\in \R$, with $a_{i,q}=\1_{\text{odd}}(q)(-1)^{(q-1)/2}\big(q((q-1)/2)!2^{(q-1)/2}\sqrt{2\pi(\sigma_i^2+1)^{q}}\big)^{-1}$ for all $q \ge 2$ and $i=1\ld d$. Here we used the functions $\1_{\text{odd}}(q)\coloneqq (1-(-1)^q)/2$ for all $q \in \N$. We now show, that there exists a constant $c \in (0,\infty)$ and a $\kappa<M$ such that $|a_{i,q}| \le ce^{\kappa q}/\sqrt{q!}$ for $q \ge 2$ and $i=1\ld d$. If $q$ is even, it is trivial, so we restrict to $q$ odd. Using Stirling's inequality~\eqref{eq:Stirling's_formula}, it follows that
\begin{align*}
    |a_{i,q}|&=\frac{1}{q((q-1)/2)!2^{(q-1)/2}\sqrt{2\pi(\sigma_i^2+1)^{q}}} \le \frac{1}{\sqrt{2\pi}} \frac{e^{(q-1)/2}}{q\sqrt{\pi(q-1)}(q-1)^{(q-1)/2}(\sigma_i^2+1)^{q/2}}\\
    &\le \frac{e^{1/24}}{(2\pi^{3})^{1/4} q(q-1)^{1/4}} \frac{1}{\sqrt{(q-1)!}(\sigma_i^2+1)^{q/2}}\le \frac{e^{1/24}}{(2^5\pi^{3})^{1/4}} (q!)^{-1/2} \left(\frac{1}{\min_{1\le i \le d}\sigma_i^2+1}\right)^{q/2},
\end{align*} for all $q \ge 2$ and $i=1\ld d$. The proof now follows from the same steps as in case i) above, except that in this case $c=e^{1/24}(2^5\pi^{3})^{-1/4}$, implying that~\eqref{eq:main_assump_a_i,q} holds.
\end{proof}

\begin{proof}[Proof of Corollary~\ref{cor:finite_hermite_series}]
    We start by proving~\eqref{eq:d_R_bound_finite expansion}. The proof follows most of the main steps of the proof of Theorem~\ref{thm:main_mult_clt_techncial_thm}. Note that all steps in the proof of Theorem~\ref{thm:main_mult_clt_techncial_thm} hold until~\eqref{eq_defn_delta_i,j,q,l}, where the only difference is that we have a finite Hermite expansion of $\varphi_i$, resulting in the summation being over $\ell \in \{2 
    \ld N\}$ instead of $\ell \ge 2$. As in the proof of Theorem~\ref{thm:main_mult_clt_techncial_thm}, we choose $M_{2N-2}\coloneqq 
    (2e/(N-1))^{N-1}$. Moreover,~\cite[Prop~A.1]{MR3911126} implies that $\Delta_n(i,j)$ is a sub-$(2N-2)$th chaos random variable relative to scale $M_{2N-2}\|\Delta_n(i,j)\|_2$. Hence,~\cite[Prop~A.2]{MR3911126} yields
    \begin{equation*}
        \E\bigg[\max_{1 \le i,j \le d}|\Delta_n(i,j)|\bigg] \le M_{2N-2} \log^{N-1}(2d^2-1+e^{N-2})\max_{1\le i,j \le d} \|\Delta_n(i,j)\|_2, \quad \text{ for all }n \in \N.
    \end{equation*} 

We now bound $\|\Delta_n(i,j)\|_2^2$ explicitly in terms of $i,j,n,N$ and $d$, where we first note that  
\begin{equation*}
    \|\Delta_n(i,j)\|_2^2
    =\var\left(\sum_{\ell=2}^N\sum_{q=2}^N \langle -DL^{-1}I_\ell(f_n(i,\ell)), D I_q(f_n(j,q)) \rangle_{\mH} \right),
\end{equation*} for all $n \in \N$ and $i,j=1 \ld d$. As in the proof of Theorem~\ref{thm:main_mult_clt_techncial_thm}, it holds that
\begin{equation*}
        \langle -DL^{-1}I_\ell(f_n(i,\ell)),DI_q(f_n(j,q))\rangle_\mH  \eqd \frac{a_{i,\ell} a_{j,q} q}{n} \sum_{k=1}^n \sum_{r=1}^n H_{\ell-1}(G_k)H_{q-1}(G_r) \rho(k-r).
    \end{equation*} 
    Thus, for all $n \in \N$ and $i,j =1 \ld d$,
    \begin{align*}
         \|\Delta_n(i,j)\|_2^2 &\le \sum_{\ell,\ell',q,q'=2}^N
         \Bigg(\frac{ |a_{i,\ell} a_{i,\ell'} a_{j,q}a_{j,q'}| q q'}{ n^2} \\
         &\quad\times \sum_{k,k',r,r'=1}^n |\cov(H_{\ell-1}(G_k)H_{q-1}(G_r),H_{\ell'-1}(G_{k'})H_{q'-1}(G_{r'})) \rho(k-r)\rho(k'-r')|\Bigg).
    \end{align*} Next, bounding the inner part of the sum above as in Theorem~\ref{thm:main_mult_clt_techncial_thm} implies that
\begin{align*}
       \max_{1 \le i,j\le d} \|\Delta_n(i,j)\|_2^2 &\le \frac{4\|\rho_n\|_{\ell^1(\Z)}^3}{n} \sum_{\ell,\ell',q,q'=2}^N q q' \wt C_{q,\ell} \wt C_{q',\ell'}  \max_{1 \le i,j\le d}|a_{i,\ell} a_{i,\ell'} a_{j,q}a_{j,q'}|  \\
        &\le  \frac{4\|\rho_n\|_{\ell^1(\Z)}^3}{n} \left(\sum_{\ell,q=2}^N q\wt C_{q,\ell}  \max_{1 \le i,j\le d}|a_{i,\ell} a_{j,q}|  \right)^2, \quad \text{ for all }n \in \N,
    \end{align*} where $ \wt C_{q,\ell} \coloneqq \E[H_{\ell-1}(G_1)^4]^{1/4} \E[H_{q-1}(G_1)^4]^{1/4}$. Thus, applying Lemma~\ref{lem:mom_bound_Hermite} on $\wt C_{q,\ell}$, and the assumption that there exists a $c>0$, $\beta\in [1/2,1]$ and $\kappa \in \R$ (if $\beta=1/2$ then $\kappa \le 0$), such that $|a_{i,q}| \le c e^{\kappa q} (q!)^{-\beta}$ for all $i=1\ld d$ and $q \ge 2$, yields the existence of a $K>0$ such that
\begin{align*}
    &\sum_{\ell,q=2}^N q\wt C_{q,\ell}  \max_{1 \le i,j\le d}|a_{i,\ell} a_{j,q}|\\
    &\le K \sum_{q,\ell=2}^N \frac{e^{\kappa q}}{ (q!)^{\beta}}\frac{e^{\kappa \ell}}{ (\ell!)^{\beta}} q (\ell-1)^{(\ell-1)/2}e^{\ell(\log(3)/2-1/2)}(\ell-1)^{1/4} (q-1)^{(q-1)/2}e^{q(\log(3)/2-1/2)}(q-1)^{1/4}.
\end{align*} Using Stirling's inequality~\eqref{eq:Stirling's_formula} together with the bounds $q/(q-1)^{1/4} \le 2^{1/4}q^{3/4}$ and $(q-1)^{q/2} \le q^{q/2}$ for all $q \ge 2$, it follows that there exists a $\wt K >0$ such that
\begin{align}
\sum_{\ell,q=2}^N q\wt C_{q,\ell}  \max_{1 \le i,j\le d}|a_{i,\ell} a_{j,q}| 
&\le
\wt K \left(\sum_{q=2}^N q^{3/4} (q!)^{1/2-\beta}e^{q (\kappa+\log(3)/2)}\right)^2.
\end{align} If $\beta=1/2$ and $\kappa<-\log(3)/2$, then $\Psi(N,1/2)\coloneqq \sum_{q \ge 2} q^{3/2}e^{q(\kappa+\log(3)/2)}<\infty$, and we may bound the expression in the display above by a finite constant. For $\kappa \in [-\log(3)/2,0]$, the function $q \mapsto q^{3/4}e^{q (\kappa+\log(3)/2)}$, and hence it follows that
\begin{align}
\sum_{\ell,q=2}^N q\wt C_{q,\ell}  \max_{1 \le i,j\le d}|a_{i,\ell} a_{j,q}| 
   \le  
\Psi(N,\beta)\coloneqq \begin{dcases}
   \wt K \left(\sum_{q =2}^N q^{3/4} (q!)^{1/2-\beta}e^{q (\kappa+\log(3)/2)}\right)^2,  \text{ if }\beta>1/2, \\
    \wt K N^{7/2}e^{N(2\kappa+\log(3))}
    , \text{ if } \beta=1/2,\, \kappa \in [-\log(3)/2,0].
\end{dcases}
\end{align} Note that $\Psi(N,\beta)$ is a finite constant for $\beta>1/2$, since it is bounded by $\Psi(\infty,\beta)<\infty$. Altogether, there exists a $\wh K>0$ such that 
    \begin{align*}
        \E\bigg[\max_{1 \le i,j \le d}|\Delta_n(i,j)|\bigg] &\le \wh K\frac{\|\rho_n\|_{\ell^1(\Z)}^{3/2}}{\sqrt{n}}
        \left(\frac{2e \log(2d^2-1+e^{N-2})}{N-1}\right)^{N-1}\Psi(N,\beta).
    \end{align*}
    
Hence, using all of the above, together with~\eqref{eq:inf_d_HR_distance} where $\bm{D}\in \mathcal{D}$ such that $\bm{D}^{1/2}\bm{\Sigma}_n\bm{D}^{1/2}=\bm{\Lambda}_n$, there exists a $\wt{C}_{\bm{\theta}}>0$ such that, it follows for all $n,d \in \N$ holds that
\begin{align*}
    d_\HR(\bm{S}_n,\bm{Z}_n)&\le 
    \wt{C}_{\bm{\theta}} \log(d) \frac{\Delta(d,n,N)}{\sigma_*^2}\left(\left|\log\left(\frac{\Delta(d,n,N)}{\sigma_{*}^2}\right)\right|\vee 1\right), \quad \text{ where }\\
    \Delta(d,n,N)&= \frac{\|\rho_n\|_{\ell^1(\Z)}
    ^{3/2}}{\sqrt{n}} 
    \left(\frac{2e \log(2d^2-1+e^{N-2})}{N-1}\right)^{N-1}\Psi(N,\beta).
    \end{align*}
    Next, using the same steps as in~\eqref{eq:inequ_split_constant} from the proof of Theorem~\ref{thm:main_theorem}, there exists a $C_{\bm{\theta}}>0$, such that
    \begin{align*}
        d_\HR(\bm{S}_n,\bm{Z}_n) &\le C_{\bm{\theta}}\frac{\log_+(\sigma^2_*)}{\sigma^2_*}\log_+(d)\Delta(d,n,N)\log_+(\Delta(d,n,N)), \quad \text{ for all }n,d \in \N.
    \end{align*}

    Next, we prove~\eqref{eq:d_c_bound_finite expansion}. To do so, follow the proof Theorem~\ref{thm:Ext_to_convex_dist} down to~\eqref{eq:finite_herm_exp_proof_d_C}, where the only difference is that we sum only for $q,\ell \in \{2,\ldots, N\}$. Hence, it follows that
    \begin{multline*}
        \sqrt{\E[\|\bm{M}_{\bm{S}_n}-\cov(\bm{S}_n)\|_{\mathrm{H.S.}}^2]}\le \wh C d \frac{\|\rho_n\|_{\ell^1(\Z)}^{3/2}}{\sqrt{n}} \Theta  \\
        \coloneqq \wh C d \frac{\|\rho_n\|_{\ell^1(\Z)}^{3/2}}{\sqrt{n}} \sum_{2 \le q,\ell \le N} \frac{e^{\kappa (q+\ell)}}{(q!\ell!)^\beta} \ell^{(\ell-1)/2}e^{(\ell-1)(\log(3)/2-1/2)}\ell^{1/4} q^{(q-1)/2}e^{(q-1)(\log(3)/2-1/2)}q^{5/4}.
    \end{multline*} Hence, since the sum is independent of $d$, it only remains to show how the sum depends on $N$. Let $w=\log(3)/2-1/2$, and note the multiplicative structure, given by
    \begin{align*}
        \Theta(N,\beta) \coloneqq \left(\sum_{q = 2}^N \frac{e^{\kappa q}}{(q!)^\beta} q^{(q-1)/2}e^{w(q-1)}q^{5/4}  \right) \left(\sum_{\ell=2}^N \frac{e^{\kappa \ell}}{(\ell!)^\beta} \ell^{(\ell-1)/2}e^{w(\ell-1)}\ell^{1/4} \right).
    \end{align*} In the case of $\beta>1/2$, we know from the proof of Theorem~\ref{thm:Ext_to_convex_dist}, that $\Theta(N,\beta)\le \Theta(\infty,\beta)<\infty$. Hence, we may in the case $\beta>1/2$ always bound $\Theta(N,\beta)\le \Theta(\infty,\beta)$, and hence have no $N$-dependence in the bound. 

    If $\beta=1/2$ and $\kappa < -\log(3)/2$, we have again from the proof of Theorem~\ref{thm:Ext_to_convex_dist}, that $\Theta(N,1/2)\le \Theta(\infty,1/2)<\infty$, and we can therefore use this to achieve an upper bound which does not depend on $N$. Finally, if $\beta=1/2$ and $\kappa \in[-\log(3)/2,0]$, we consider for $a \in \{1,5\}$ the function $\wt{\Theta}(N,1/2,a)
\coloneqq \sum_{q = 2}^N e^{\kappa q}(q!)^{-1/2} \, q^{(q-1)/2} e^{w(q-1)} q^{a/4}$. To find the exact behaviour of $\wt{\Theta}(N,1/2,a)$, we apply Stirling's inequality~\eqref{eq:Stirling's_formula}, which implies
\begin{equation*}
    e^w (2\pi)^{1/4}\wt{\Theta}(N,1/2,a)
    \le
    \sum_{q=2}^N
    e^{(\kappa + \log(3)/2) q}
    q^{(a-3)/4} = \begin{dcases}
    \sum_{q=2}^N
    q^{(a-3)/4}, & \!\!\!\text{if } \kappa=-\log(3)/2,\\
    \sum_{q=2}^N
    e^{(\kappa + \log(3)/2) q}
    q^{(a-3)/4}, & \!\!\!\text{if } \kappa\in (-\log(3)/2,0]. 
    \end{dcases} 
\end{equation*} In the case $\kappa \in (-\log(3)/2,0]$, the function $q \mapsto e^{(\kappa+\log(3)/2)q}q^{(a-3)/4}$ is increasing for $a=5$. Moreover, for $a=1$, we use that $e^{(\kappa + \log(3)/2) q}
    q^{(a-3)/4} \le e^{(\kappa + \log(3)/2) q}$. Hence, 
\begin{equation*}
    e^w (2\pi)^{1/4}\wt{\Theta}(N,1/2,a)
    \le \begin{dcases}
    \sum_{q=2}^N
    q^{(a-3)/4}, & \!\!\!\text{if } \kappa=-\log(3)/2,\\
    N e^{(\kappa+\log(3)/2)N}(\1_{\{a=1\}}+\1_{\{a=5\}}\sqrt{N}), & \!\!\!\text{if } \kappa\in (-\log(3)/2,0]. 
    \end{dcases} 
\end{equation*} Hence, altogether, it follows that $\Theta(N,\beta)\le \Psi(N,\beta)\coloneqq \Theta(\infty,\beta)<\infty$ if $\beta>1/2$, and if $\beta=1/2$, we have that
\begin{equation}\label{eq:bound_M_s_n_stirling_d_C_corrected}
    \Theta(N,1/2) \le \Psi(\infty,1/2)\coloneqq  \begin{dcases}
        \Theta(\infty,1/2)<\infty, & \text{ if }\kappa<-\log(3)/2,\\
        \left( \sum_{q=2}^N q^{1/2}\right)\left( \sum_{\ell=2}^N \ell^{-1/2}\right), & \text{ if }\kappa=-\log(3)/2, \\
        N^{5/2} e^{2(\kappa+\log(3)/2)N}, & \text{ if } \kappa \in (-\log(3)/2,0].
    \end{dcases}
\end{equation}

Finally, using the ideas as in the proof of Theorem~\ref{thm:Ext_to_convex_dist}, when going from~\eqref{eq:d_c_before_correlation} to \eqref{eq:d_c_after_correlation}, we may conclude for $\Psi(N,\beta)$ as defined in~\eqref{eq:bound_M_s_n_stirling_d_C_corrected}, that there exists a uniform constant $C_{\bm{\theta}}>0$, which depends only on $\bm{\theta}$, such that
    \begin{equation*}
        d_\CD(\bm{S}_n,\bm{Z}_n) \le C_{\bm{\theta}} d^{65/24} \Psi(N,\beta) n^{-1/2} \|\rho_n\|_{\ell^1(\Z)}^{3/2} \big((\sigma_*^2)^{-3/2}+1\big),
    \end{equation*} for all $n,d \in \N$, concluding the proof.
\end{proof}

\section*{Acknowledgements}

\thanks{
\noindent ABO and DKB are supported by AUFF NOVA grant AUFF-E-2022-9-39. DKB would like to thank the Isaac Newton Institute for Mathematical Sciences, Cambridge, for support and hospitality during the programme Stochastic systems for anomalous diffusion, where work on this paper was undertaken. This work was supported by EPSRC grant EP/Z000580/1.}

\printbibliography

\appendix
\section{Proof of Lemma~\ref{lem:extend_fang_koike}}\label{app:proof_lemma}
\begin{proof}[Proof of Lemma~\ref{lem:extend_fang_koike}]
    We start by extending to the case $d=2$. Assume that $\bm{F}$ has a Stein's kernel $\bm{\tau^F}$ and that $\bm{Z}\sim \mathcal{N}_2(0,\bm{\Sigma})$ with $\sigma_*^2>0$. The aim is to show that the $d=3$ case directly shows the case $d=2$ by adding an independent standard normal to $\bm{F}$. Indeed, let $\wt Z \sim \mathcal{N}(0,1)$ be an independent standard normal, and let $\wh{\bm{F}}\coloneqq (\bm{F}^\intercal , \wt Z)^\intercal$. Moreover, we define
    \begin{equation}
       \bm{\tau}^{\wh{\bm{F}}}(\wh{\bm{F}}) 
        \coloneqq \begin{pmatrix}
            \bm{\tau^F} & \begin{matrix}
                0 \\ 0 
            \end{matrix}\\ \begin{matrix}
                0 & 0
            \end{matrix} & 1
        \end{pmatrix} \quad \text{ and }\quad
        \wh{\bm{Z}} \sim \mathcal{N}_3(\bm{0},\wh{\bm{\Sigma}}), \quad \text{ where } \wh{\bm{\Sigma}}=\begin{pmatrix}
            \bm{\Sigma} & \begin{matrix}
                0 \\ 0 
            \end{matrix}\\ \begin{matrix}
                0 & 0
            \end{matrix} & 1
        \end{pmatrix}.
    \end{equation} Hence, to use~\eqref{eq:log_dim_upperbound} for $d=3$, we want to show that $\wh{\bm{F}}$ has Stein's kernel given by $\bm{\tau}^{\wh{\bm{F}}}(\wh{\bm{F}})$. Thus, we need to verify that $\E[|\bm{\tau}_{i,j}^{\wh{\bm{F}}}(\wh{\bm{F}})|]<\infty$ for any $i,j \in \{1\ld 3\}$ and that $\bm{\tau}^{\wh{\bm{F}}}(\wh{\bm{F}})$ satisfies~\eqref{eq:defn_steins_kernel} for $\wh{\bm{F}}$. By construction of $\bm{\tau}^{\wh{\bm{F}}}(\wh{\bm{F}})$, it follows directly that $\E[|\bm{\tau}_{i,j}^{\wh{\bm{F}}}(\wh{\bm{F}})|]<\infty$ for any $i,j \in \{1\ld 3\}$, since $\bm{\tau^F}(\bm{F})$ is a Stein's Kernel. To verify that $\bm{\tau}^{\wh{\bm{F}}}(\wh{\bm{F}})$ satisfies~\eqref{eq:defn_steins_kernel} for $\wh{\bm{F}}$, we note that $\bm{\tau^F}$ is a Stein's kernel, which by~\eqref{eq:defn_steins_kernel} implies $\sum_{j=1}^2 \E[\partial_j f(\bm{F})F_j]=\sum_{i,j=1}^2 \E[\partial_{ij}f(\bm{F})\bm{\tau}_{i,j}^{\bm{F}}(\bm{F})]$. Applying this equality and the definition of $\bm{\tau}_{i,j}^{\wh{\bm{F}}}(\wh{\bm{F}})$, it follows that~\eqref{eq:defn_steins_kernel} for $\wh{\bm{F}}$ and $\bm{\tau}^{\wh{\bm{F}}}(\wh{\bm{F}})$ reduces to
    \begin{equation}
        \E[\partial_3 f(\wh{\bm{F}})\wt Z]=\sum_{i,j\in \{1,2,3\}:i\vee j=3} \E[\partial_{ij}f(\wh{\bm{F}})\bm{\tau}_{i,j}^{\wh{\bm{F}}}(\wh{\bm{F}})]=\E[\partial_{33}f(\wh{\bm{F}})],
    \end{equation} which holds by~\cite[Lem.~1.1.1]{MR2962301}. Hence, all assumptions of~\cite[Thm~1.1]{MR4312842} are satisfied for $d=3$, and it follows that
    \begin{equation}
        d_\HR(\wh{\bm{F}},\wh{\bm{Z}})\le 
    \frac{C\log(3) \Delta_{\wh{\bm{F}}}}{\sigma_*^2(\wh{\bm{\Sigma}})
    } \log_+\left(\frac{\underline{\sigma}(\wh{\bm{\Sigma}})
    \Delta_{\wh{\bm{F}}}}{\overline{\sigma}(\wh{\bm{\Sigma}})
    \sigma_{*}^2(\wh{\bm{\Sigma}})
    }\right), \, \text{with } \Delta_{\wh{\bm{F}}}\coloneqq \E\bigg[\max_{1 \le i,j \le 3}|\wh{\bm{\Sigma}}_{i,j}-\bm{\tau}^{\wh{\bm{F}}}_{i,j}(\wh{\bm{F}})|\bigg].
    \end{equation} Now, by definition of $\wh{\bm{\Sigma}}$, it follows that $\sigma_*^2(\wh{\bm{\Sigma}})=\sigma_*^2(\bm{\Sigma})\wedge 1$, $\underline{\sigma}(\wh{\bm{\Sigma}})=\underline{\sigma}(\bm{\Sigma})\wedge 1$ and $\overline{\sigma}(\wh{\bm{\Sigma}})=\overline{\sigma}(\bm{\Sigma}) \vee 1$. Finally, by definition of $d_\HR$ and since $(\bm{\tau}^{\wh{\bm{F}}}(\wh{\bm{F}}) )_{i,j}=(\wh{\bm{\Sigma}})_{i,j}$ for all $i,j\in \{1,2,3\}$ where $i\vee j=3$, it follows that 
    \begin{equation}
        d_\HR(\bm{F},\bm{Z})
        \le 
        d_\HR(\wh{\bm{F}},\wh{\bm{Z}})
        \le 
    \frac{C\log(3) \Delta_{\bm{F}}}{\sigma_*^2(\bm{\Sigma})\wedge 1
    } \log_+\left(\frac{(\underline{\sigma}(\bm{\Sigma})\wedge 1)
    \Delta_{\bm{F}}}{(\overline{\sigma}(\bm{\Sigma})\vee 1)
    (\sigma_{*}^2(\bm{\Sigma})\wedge 1)
    }\right), \, \text{with } 
    \end{equation} $\Delta_{\bm{F}}\coloneqq \E\big[\max_{1 \le i,j \le 2}|\wh{\bm{\Sigma}}_{i,j}-\bm{\tau}^{\bm{F}}_{i,j}(\bm{F})|\big]$, concluding the proof for $d=2$. The same method can be used for the proof of $d=1$, which will be omitted here for brevity of the proof.
\end{proof}

\end{document}